\numberwithin{equation}{section}
\newtheorem{lemma}[equation]{Lemma}
\newtheorem{conj}[equation]{Conjecture}
\newtheorem{corollary}[equation]{Corollary}
\newtheorem{theorem}[equation]{Theorem}
\newtheorem*{thm1.3p}{Theorem 1.3'}
\newtheorem*{thm1.3pp}{Theorem 1.3''}
\newcommand{\remark}{\noindent\textbf{Remark.}}
\newcommand{\me}{\mathrm{e}}
\newcommand{\eps}{\varepsilon}
\newcommand{\vphi}{\varphi}
\newcommand{\Cb}{\mathbb{C}}
\newcommand{\Rb}{\mathbb{R}}
\newcommand{\Bl}{\mathcal{B}}
\newcommand{\dist}{\mathrm{dist}}
\newcommand{\Hess}{\mathrm{Hess}}
\newcommand{\Ric}{\mathrm{Ric}}
\newcommand{\Sec}{\mathrm{Sec}}
\newcommand{\inj}{\mathrm{inj}}
\newcommand{\Vol}{\mathrm{Vol}}
\newcommand{\Det}{\mathrm{det}}
\newcommand{\grad}{\mathrm{grad}}
\newcommand{\dif}[1]{{\;d #1}}
\def\blfootnote{\xdef\@thefnmark{}\@footnotetext}
\begin{document}
\title{The effect of curvature on convexity properties
of harmonic functions and eigenfunctions}
\author{Dan Mangoubi}
\date{\textit{Dedicated to Shmuel Agmon with admiration and gratitude \\
on the occasion of his 90th birthday}}
\blfootnote{2010 MSC: 35P20, 58J50 (primary),
53C21, 35J15 (secondary)}
\maketitle
\begin{abstract}
We give a proof of the
growth bound of Laplace-Beltrami eigenfunctions due to Donnelly
and Fefferman which is 
probably the easiest and the most elementary one.
Our proof also gives new quantitative geometric estimates
in terms of curvature bounds
which improve and simplify 
previous work by Garofalo and Lin.
The proof is based on a generalization of a convexity property of harmonic functions in $\Rb^n$ to harmonic functions
on Riemannian manifolds following Agmon's ideas.
\end{abstract}

\section{Introduction}
In their seminal paper~\cite{don-fef88} Donnelly and Fefferman 
found growth bounds (DF-growth bound) for 
eigenfunctions on compact Riemannian manifolds.
Roughly, they showed that a 
$\lambda$-eigenfunction grows like a polynomial
of order $\sqrt{\lambda}$ at most.
This result is central in the study of eigenfunctions.
In~\cite{don-fef88} it was applied to prove Yau's conjecture
on real analytic manifolds. Namely, sharp upper
and lower bounds on the size of the nodal set on real analytic 
manifolds were found.
The proof of the growth bound in~\cite{don-fef88} 
went through a fine version of a Carleman type inequality
for the operator $\Delta+\lambda$, with a
careful geometric choice of the weight function.

Recently after, Lin (\cite{lin91}), based on an 
earlier work with Garofalo (\cite{gar-lin86}),
 gave a simpler proof of the growth bound. 
This proof is based on properties of
the spherical $L^2$-norm, $q(r)$ (defined in~(\ref{def:q})),
for harmonic functions. It had been known (\cites{agmon66, almgren79})
that in $\Rb^n$, $\log q$ is monotonically increasing and convex
as a function of $\log r$.
Equivalently, 
$rq'(r)/q(r)$ is monotonically increasing.
Garofalo-Lin showed that for a harmonic function 
defined on a general Riemannian manifold
$\me^{\Lambda r}rq'/q$ is monotonically increasing in (0, R),
where $\Lambda$ and $R$ are some positive constants
depending on bounds on the Riemannian metric, on its first derivatives
and  on the ellipticity constant of the Riemannian metric.
This result can be viewed as an approximated convexity result.
The proof of this result was based on a non-trivial geometric
variational argument which was first used by Almgren~\cite{almgren79}.

The first aim of this paper 
is to give new geometric estimates on 
$\Lambda$ and $R$ in terms of the curvature of the manifold.
Namely, we find that all one needs is a lower and an upper bound on the
sectional curvature in order to guarantee the existence of 
$\Lambda$ and $R$. Moreover, we show that in fact
$\me^{C_1r^2K}rq'(r)/q(r)$ is monotonic in $(0, R)$, where $K$ is an upper bound on the
curvature, $R$ is the minimum of $C_2/\sqrt{K^+}$ and the injectivity radius, 
and $C_1, C_2$ depend only on the dimension of the manifold.
We emphasize that our result distinguishes between negative and positive curvatures. 
This is the content of the main 
Theorem~\ref{thm:perturbed-convexity}.

The second aim of this paper is to have a simple proof of the DF-growth 
bound for  eigenfunctions. Due to the importance of this result
three simplifications to its proof had been previously given 
by different authors in the course of years, which we briefly survey:

The idea of Lin in~\cite{lin91}
 was to consider
  a conic manifold, $N$, over $M$ 
and to extend the eigenfunction $u_\lambda$
  to a harmonic function on $N$. Then, Lin applied the monotonicity
property of $\me^{\Lambda r}rq'/q$ from~\cite{gar-lin86} for 
the harmonic function obtained, and went back to the eigenfunction.

 Jerison and Lebeau
applied in~\cite{jer-leb96} a similar extension of eigenfunctions.
Then, they could use standard Carleman type inequalities for harmonic functions,
instead of the original approach taken by Donnelly and Fefferman
in which a special and delicate Carleman type inequality for
eigenfunctions was used.

In dimension two Nazarov-Polterovich-Sodin~\cite{naz-pol-sod05} 
took advantage of the conformal coordinates, thus letting them to simplify
the problem by considering only the standard Laplace operator in $\Rb^2$. 
Then, they extend the eigenfunction to a harmonic function on 
$N=M\times\Rb$,
and apply convexity argument on the harmonic function (in $\Rb^3$) obtained.
Their proof of convexity of $\log q$ is considerably simpler
than the variational approach taken in~\cite{gar-lin86}.
It is close in spirit to Agmon's approach.
This gives the easiest proof of the DF-growth bound in dimension two,
since no need for variational arguments or Carleman type inequalities
at all is required.

This paper extends the work started in~\cite{naz-pol-sod05}, 
to dimensions
 $\geq 3$, where no conformal coordinates exist.
We follow and generalize Agmon's ideas in~\cite{agmon66},
where a general approximated convexity theorem for
second order elliptic equations is proved
by considering them as an abstract second order ODE.
Our contribution here comes in adding the geometric point of view,
clarifying the way curvature affects 
the Euclidean result. Our proof also simplifies 
and improves Agmon's results in~\cite{agmon66}.
In this way we are able circumvent the need to  use 
the non-trivial variational argument in~\cite{gar-lin86}
or any Carleman type inequality.
\vspace{1Ex}

\noindent\textbf{Organization of the paper.}
The main result is presented in section~\ref{sec:main-thm}.
In section~\ref{sec:extension} we recall a way eigenfunctions
can be extended to harmonic functions and the translation of the convexity
property of harmonic functions to a local growth bound on eigenfunctions.
In section~\ref{sec:global-growth} we conclude the proof of
the DF-growth bound on compact manifolds.
and we outline the proof of Yau's conjecture in~\cite{don-fef88}. Sections~\ref{sec:extension} and~\ref{sec:global-growth} are strongly based
on~\cite{naz-pol-sod05}.
In section~\ref{sec:proof-thm} we give the proof of the main theorem.
In section~\ref{sec:examples} we consider constant curvature manifolds
as examples to the main theorem and find a second proof in some of these cases.
In section~\ref{sec:discussion} we discuss several open questions.
\vspace{1Ex}

\noindent\textbf{Notation.}
Throughout this paper $C_i, C_i(n)$ denote positive constants which depend
only on dimension. The positive constants $C_g(\ldots)$ depend on bounds
on the metric $g$, its first derivatives, its ellipticity constant
and additional parameters appearing in parentheses.
\vspace{1Ex}

\noindent\textbf{Acknowledgements.}
I am grateful to Leonid Polterovich and Misha Sodin
for encouraging me to write this paper and for several discussions
concerning it. 
I thank J\'ozef Dodziuk for pleasurable relevant discussions.
I owe my gratitude to Shing-Tung Yau for 
his support and for stimulating questions which left their imprint
on this paper. 
Finally, I would like to thank the anonymous referee for his valuable
comments. This research was partially supported by ISF 
grant no.\ 225/10 and by BSF grant no.\ 2010214.

\section{Main Theorem: A perturbed $\log$-convexity 
property of harmonic functions}
\label{sec:main-thm}
Let $u$ be a harmonic function in $\Rb^n$.
Let $q(r)$ denote the square of the spherical $L^2$-norm: 
$$q(r):=\int_{S_r} u^2 \dif \sigma_r\ ,$$
where $S_r$ denotes the sphere of radius $r$ centered at $0$, 
and $d\sigma_r$ is the standard area measure on $S_r$. 
It's easy to check that $q$ is a convex function of $\log r$.
It turns out that even $\log q$ is  a convex function of $\log r$:
\begin{theorem}[\cite{agmon66}]
\label{thm:log-convexity-rn}
$q$ has the following two properties:
\begin{enumerate}
\item[(i)]
$\displaystyle{q'(r)\geq \frac{n-1}{r}q(r)}$,
\item[(ii)]
$\displaystyle{q''(r)+\frac{1}{r}q'(r)-\frac{q'(r)^2}{q(r)}\geq 0}$.
\end{enumerate}
\end{theorem}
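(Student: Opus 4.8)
I would give two arguments — a short one special to $\Rb^{n}$ and a coordinate‑free one that is the natural template for the Riemannian generalization.

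\emph{Plan A (solid harmonics).} The idea is to diagonalize $q$. Being entire and harmonic, $u$ expands in solid harmonics, $u(r\omega)=\sum_{k\geq0}r^{k}h_{k}(\omega)$ for $\omega\in S^{n-1}$, where each $h_{k}$ is a spherical harmonic of degree $k$; the $h_{k}$ are pairwise orthogonal in $L^{2}(S^{n-1})$, being eigenfunctions of the spherical Laplacian for the distinct eigenvalues $-k(k+n-2)$. Writing $b_{k}:=\|h_{k}\|_{L^{2}(S^{n-1})}^{2}\geq0$ and using $d\sigma_{r}=r^{n-1}\,d\omega$, orthogonality collapses the spherical $L^{2}$‑norm to a single power series,
$$q(r)=r^{n-1}\sum_{k\geq0}b_{k}\,r^{2k}=\sum_{k\geq0}b_{k}\,r^{\,2k+n-1},$$
which converges and may be differentiated term by term on $r>0$ (by real‑analyticity of $u$). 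If $u\equiv0$ the assertion is vacuous, so I would assume some $b_{k}>0$, whence $q>0$ on $(0,\infty)$. Part (i) then follows from $q'(r)-\tfrac{n-1}{r}q(r)=\tfrac1r\sum_{k\geq0}2k\,b_{k}\,r^{\,2k+n-1}\geq0$. For (ii), I would pass to $s=\log r$, note that (ii) is equivalent to $\tfrac{d^{2}}{ds^{2}}\log q\geq0$, i.e.\ to $q\,q_{ss}\geq q_{s}^{2}$, and observe that with $q=\sum_{k}b_{k}e^{\alpha_{k}s}$, $\alpha_{k}:=2k+n-1$, one has $q_{s}=\sum_{k}\alpha_{k}b_{k}e^{\alpha_{k}s}$ and $q_{ss}=\sum_{k}\alpha_{k}^{2}b_{k}e^{\alpha_{k}s}$, so $q\,q_{ss}\geq q_{s}^{2}$ is nothing but Cauchy–Schwarz for the sequences $\bigl(\sqrt{b_{k}e^{\alpha_{k}s}}\bigr)_{k}$ and $\bigl(\alpha_{k}\sqrt{b_{k}e^{\alpha_{k}s}}\bigr)_{k}$. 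The only bookkeeping is justifying term‑by‑term differentiation and fixing the orthogonality normalization; I expect no real obstacle here.

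\emph{Plan B (Green's and Rellich's identities).} This is the argument I would build on in the manifold case. Green's identity on $B_{r}$, together with $\Delta u=0$, gives $q'(r)=\tfrac{n-1}{r}q(r)+2D(r)$ with $D(r):=\int_{B_{r}}|\nabla u|^{2}$, which is already (i). For (ii) I would differentiate $D$ using the Rellich–Pohozaev identity $D'(r)=\tfrac{n-2}{r}D(r)+2\int_{S_{r}}|\partial_{\nu}u|^{2}\,d\sigma_{r}$ (obtained by pairing $\Delta u=0$ with the radial field $x\cdot\nabla u$ and integrating by parts), then combine it with $D(r)=\int_{S_{r}}u\,\partial_{\nu}u\,d\sigma_{r}$ and the Cauchy–Schwarz bound $D(r)^{2}\leq q(r)\int_{S_{r}}|\partial_{\nu}u|^{2}\,d\sigma_{r}$ to obtain, for $N(r):=rD(r)/q(r)$,
$$N'(r)=\frac{2r}{q(r)^{2}}\Bigl(q(r)\!\int_{S_{r}}|\partial_{\nu}u|^{2}\,d\sigma_{r}-D(r)^{2}\Bigr)\geq0.$$
Since $N(r)=\tfrac{rq'(r)}{2q(r)}-\tfrac{n-1}{2}$, the monotonicity of $N$ is exactly (ii) rewritten (via $\tfrac{d}{dr}\bigl(rq'/q\bigr)\geq0$). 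In this route the one substantive step is the Rellich identity; the rest is algebra. I anticipate that the real difficulty of the paper appears only when Plan B is carried to a Riemannian manifold, where the radial field is no longer Killing and curvature terms enter the Rellich‑type identity — which is precisely what Theorem~\ref{thm:perturbed-convexity} quantifies.
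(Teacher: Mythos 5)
Both of your plans are correct, so there is no gap; the interesting point is how they sit relative to the paper. Plan A is essentially the proof the paper itself gives in Section~\ref{sec:examples} for the flat case: there the author expands in the basis $r^{l}\cos(l\theta)$, $r^{l}\sin(l\theta)$ (dimension two, with a remark that the same works for $n\geq 3$), notes that each summand of $q$ is log-convex in $\log r$, and uses that sums and pointwise limits of log-convex functions are log-convex --- your Cauchy--Schwarz computation on the series $\sum_k b_k e^{\alpha_k s}$ is precisely the proof of that closure property, so the two arguments are the same modulo packaging (and your ``entire'' hypothesis is not needed: harmonicity on a ball suffices, the expansion converging locally uniformly there). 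Plan B, by contrast, is a genuinely different route from anything in the paper: it is the classical Almgren frequency-function argument (Rellich--Pohozaev identity plus Cauchy--Schwarz giving $N'\geq 0$, with $\tfrac{d}{dr}(rq'/q)=\tfrac{r}{q}\bigl(q''+\tfrac{1}{r}q'-\tfrac{q'^2}{q}\bigr)$, so monotonicity of $N$ is indeed statement (ii)), i.e.\ the Garofalo--Lin mechanism that the paper explicitly sets out to avoid. The paper instead obtains Theorem~\ref{thm:log-convexity-rn} as the case $K=\kappa=0$ of Theorem~\ref{thm:perturbed-convexity}, proved in Section~\ref{sec:proof-thm} by Agmon's ODE method: differential identities for $q$ (after the substitution $w=(\sin_K r)^{l}u$), Green's formula on geodesic spheres, and Cauchy--Schwarz applied directly to $q$, $q'$, $q''$, with no frequency function and no Rellich-type identity. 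The trade-off is exactly the one you anticipate at the end: your Plan B carries over to manifolds only through a variational/Rellich identity with curvature corrections (the Garofalo--Lin route, which loses the explicit curvature dependence), Plan A does not generalize for lack of an explicit orthogonal basis (cf.\ the open question in Section~\ref{subsec:orthogonal}), while the paper's ODE approach is what produces the curvature-explicit constants of Theorem~\ref{thm:perturbed-convexity}.
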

In dimension $2$ this can be seen by a complex analysis argument.
In higher dimensions this fact goes back at least to Agmon~(\cite{agmon66}),
and it was rediscovered by Almgren~\cite{almgren79}. 
Landis (\cite{landis63}*{Ch. II.2}) 
found also several results close in spirit to that one.
All these kinds of results were 
inspired by Hadamard's Three Circles Theorem (See~\cite{ahlfors-ca}*{Ch. 6.2}),
which shows log-convexity of the spherical $L^\infty$-norm for
a holomorphic function.
\vspace{1ex}

\noindent\textbf{Remark.} It is somewhat surprising that the 
fundamental solution does not play a role here:
$\log q$ is a convex function of $\log r$ in \emph{all}
dimensions. The weaker statement
is that $\log q$ is a convex function of $G(r)=-1/r^{n-2}$,
which is equivalent to
$q\Delta \log q= q''(r)+\frac{n-1}{r}q'(r)-\frac{q'(r)^2}{q(r)}\geq 0$. 
\vspace{1ex}

When considering harmonic functions on manifolds, 
one expects a perturbed version of Theorem~\ref{thm:log-convexity-rn}
in small geodesic balls. However, it is not clear a priori 
how far from the center this perturbation goes and how curvature controls it. Theorem~\ref{thm:perturbed-convexity} below will give an answer to these questions.
Let $u$ be a harmonic function defined in a small geodesic ball
of a Riemannian manifold $N$. Let 
\begin{equation}
\label{def:q}
q(r):= \int_{S(r)} u^2\, dA_r\ ,
\end{equation}
where $S(r)$ is a geodesic sphere centred at $p\in N$, 
and $dA_r$ is the area form on $S(r)$.
$\sqrt{q}$ is the spherical 
$L^2$-norm on a geodesic sphere of radius $r$.
We let $\Sec_N$ denote the sectional curvature of $N$, 
$K^+=\max\{K, 0\}$, 
$$
\sin_K r = \left\{\begin{array}{lcl}
\frac{\sin(r\sqrt{K})}{\sqrt{K}} &,& K>0,\\
        r&,& K=0,\\
        \frac{\sinh(r\sqrt{-K})}{\sqrt{-K}} &,& K<0\ .
        \end{array}\right.
$$
and $\cot_K r=(\sin_K r)'/(\sin_K r)$.
We can now state our main result:
\begin{theorem}
\label{thm:perturbed-convexity}
Let $N$ be a Riemannian manifold. Let $u$ be a harmonic function
on a geodesic ball in $N$, and $q$ defined as in~(\ref{def:q}).
Let $\kappa, K\in\Rb$, $\kappa\leq K$. Let $R=\min\left(\inj(M), \pi/(2\sqrt{K^+})\right).$
We have
\begin{enumerate}
\item[(i)] If $\Sec_N\leq  K$ 
then $q(r)/(\sin_K r)^{n-1}$ is monotonically increasing for \mbox{$r<R$}. Equivalently, 
 $$(\log q)'(r) \geq (n-1)(\cot_K r)\ .$$
\item[(ii)] 
If $\kappa\leq \Sec_N \leq K$
then for $r<R$
\begin{multline*}
(\log q)''(r)+(\cot_K r)(\log q)'(r)
+(n+1)(\cot_\kappa r-\cot_K r)(\log q)'(r) \\ 
\geq -K-(n-2)K^{+}-(2n-3)(K-\kappa)\ .
\end{multline*}
\end{enumerate}
\end{theorem}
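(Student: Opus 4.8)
The plan is to compute in geodesic polar coordinates $(r,\theta)$ about the centre $p$, writing $dV=J(r,\theta)\,dr\,d\theta$ and $dA_r=J(r,\theta)\,d\theta$ and setting $H:=\partial_r\log J=\Delta r$, the mean curvature of the geodesic sphere $S(r)$. In these coordinates $\Delta u=u_{rr}+Hu_r+\Delta_{S(r)}u$ and $|\nabla u|^2=u_r^2+|\nabla_{S(r)}u|^2$. Two comparison-geometry facts, valid for $r<R$, drive the argument. First, the Hessian/Laplacian comparison theorem: the eigenvalues of $\Hess r$ restricted to $TS(r)$ lie in $[\cot_K r,\cot_\kappa r]$, so $(n-1)\cot_K r\le H\le(n-1)\cot_\kappa r$; the condition $R\le\pi/(2\sqrt{K^+})$ keeps all of these quantities strictly positive (and $\cot_K r\le\cot_\kappa r$), and $R\le\inj(M)$ makes both the coordinates and the comparison theorems available. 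Second, the Riccati equation $S'+S^2+R_{\partial_r}=0$ for the shape operator $S=\Hess r|_{TS(r)}$ along a radial geodesic; its trace reads $H_r+H^2=(\mathrm{tr}\,S)^2-\mathrm{tr}(S^2)-\Ric(\partial_r,\partial_r)=J_{rr}/J$, and combined with $\mathrm{tr}(S^2)\ge H^2/(n-1)$ and $\kappa\le\Sec_N\le K$ this traps $H_r+H^2$ between the values $(n-1)(n-2)\cot_K^2 r-(n-1)K$ and $(n-1)(n-2)\cot_\kappa^2 r-(n-1)\kappa$ taken in the two model spaces.

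Part (i) is then immediate: differentiating under the integral sign and using the divergence theorem on $B(r)$ together with $\Delta u=0$,
$$q'(r)=\int_{S(r)}(2uu_r+u^2H)\,dA_r=2\int_{B(r)}|\nabla u|^2\,dV+\int_{S(r)}u^2H\,dA_r\ .$$
The bulk term is nonnegative and $H\ge(n-1)\cot_K r$ pointwise, so $q'(r)\ge(n-1)(\cot_K r)\,q(r)$, i.e. $(\log q)'(r)\ge(n-1)\cot_K r$, which is exactly the asserted monotonicity of $q(r)/(\sin_K r)^{n-1}$.

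For part (ii) I would set $D(r):=\int_{B(r)}|\nabla u|^2\,dV=\int_{S(r)}uu_r\,dA_r$. Eliminating $u_{rr}$ via harmonicity and integrating by parts on $S(r)$ gives $D'(r)=\int_{S(r)}|\nabla u|^2\,dA_r$; differentiating $q'=2D+\int_{S(r)}u^2H\,dA_r$ once more and substituting $H_r+H^2=J_{rr}/J$ expresses $q''$ through $D'$, $\int_{S(r)}uu_rH$ and $\int_{S(r)}u^2(J_{rr}/J)$. The two analytic inputs are Cauchy--Schwarz, $D(r)^2=\big(\int_{S(r)}uu_r\big)^2\le q(r)\int_{S(r)}u_r^2$, and a perturbed Rellich--Pohozaev identity obtained by integrating $\DIV\!\big((Xu)\nabla u-\tfrac12|\nabla u|^2X\big)$ over $B(r)$ with the radial field $X=\grad(r^2/2)=r\partial_r$: since $\DIV X=1+rH$ and $\Hess(r^2/2)=dr\otimes dr+r\Hess r$ with $\Hess r(\partial_r,\cdot)=0$,
$$r\int_{S(r)}u_r^2\,dA_r-\frac r2\int_{S(r)}|\nabla u|^2\,dA_r=\int_{B(r)}\Big(u_r^2+r\,\Hess r(\nabla_{S(r)}u,\nabla_{S(r)}u)-\tfrac12|\nabla u|^2(1+rH)\Big)\,dV\ ,$$
whose integrand is once more controlled, from above and below, by the comparison values for curvature $K$ and $\kappa$. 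Substituting all of this into the identity
$$q(\log q)''+(\cot_K r)\,q(\log q)'+(n+1)(\cot_\kappa r-\cot_K r)\,q(\log q)'=qq''-(q')^2+\big(\cot_K r+(n+1)(\cot_\kappa r-\cot_K r)\big)qq'\ ,$$
the curvature-free part cancels exactly, reproducing the Euclidean identity behind Theorem~\ref{thm:log-convexity-rn}(ii) (the case $\cot_0 r=1/r$, right-hand side $0$), and the remaining curvature terms, tidied with $\cot_K^2 r=-K-(\cot_K r)'$, collect into the asserted lower bound $-K-(n-2)K^+-(2n-3)(K-\kappa)$.

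The hard part is this last assembly. The comparison functions appearing in the Rellich bulk integral are evaluated at the running radius $s\le r$, so one must either carry them along or first differentiate the identity to trade the bulk integral for boundary integrals over $S(r)$; then the many terms $\int u^2$, $\int uu_r$, $\int u_r^2$, $\int|\nabla_{S(r)}u|^2$ have to be grouped so that the curvature-free combination is visibly zero and the remainder packages into the dimensional constants $1$, $n-2$, $2n-3$ --- this is also where the sign of $K$ forces $K^+$ rather than $K$ at one spot, and where the weight multiplying $(\log q)'$ is pinned down to $\cot_K r+(n+1)(\cot_\kappa r-\cot_K r)$. Signs must be tracked throughout: the inequalities $H\ge(n-1)\cot_K r>0$, $\Hess r\ge\cot_K r>0$ and the Cauchy--Schwarz step are all used with one fixed orientation, which is legitimate exactly in the range $r<R=\min(\inj(M),\pi/(2\sqrt{K^+}))$ where $\cot_K r>0$; and since a constant function on a round sphere of curvature $K=\kappa$ already makes (ii) an equality, no slack can be spent in the comparison estimates.
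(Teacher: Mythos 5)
Your part (i) is correct and is essentially the paper's argument: differentiate $q$, use Green's formula to see that the $\int uu_r$ term is nonnegative, and bound $\Delta r$ from below by $(n-1)\cot_K r$ via Hessian comparison (the paper phrases this through $\gamma_K=\Delta r-(n-1)\cot_K r\ge 0$).

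For part (ii) there is a genuine gap. What you give is a strategy in the Rellich--Pohozaev/frequency style (the vector field $X=r\partial_r$, the identity for $\DIV\big((Xu)\nabla u-\tfrac12|\nabla u|^2X\big)$, Cauchy--Schwarz on $D=\int_{S(r)}uu_r$), i.e.\ precisely the Almgren/Garofalo--Lin variational route that this paper is written to avoid; but the decisive computation is exactly the step you defer as ``the hard part is this last assembly.'' Nothing in the sketch actually produces the weight $(n+1)(\cot_\kappa r-\cot_K r)$ on $(\log q)'$, the constants $1$, $n-2$, $2n-3$, or the asymmetric appearance of $K^+$; these do not fall out of a generic bookkeeping of $\int u^2$, $\int uu_r$, $\int u_r^2$, $\int|\nabla_{S(r)}u|^2$, and it is unverified (and doubtful in the form stated) that the ``curvature-free part cancels exactly'' once $H$ and $\Hess r$ are only pinched between the $\kappa$- and $K$-model values rather than equal to them. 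Two specific obstructions: the cross terms carrying $\gamma_K=\Delta r-(n-1)\cot_K r$ enter the Cauchy--Schwarz step multiplicatively, and a careless application (the crude $D^2\le q\int u_r^2$ you propose) loses exactly the margin needed for the sharp coefficient --- the paper has to split the Cauchy--Schwarz into two halves and absorb $\int\gamma_K w^2$ against $Q'$ to get the $(n+1)$; and your Rellich bulk integral carries comparison quantities at all running radii $s\le r$, which cannot simply be bounded pointwise and traded for boundary data at radius $r$ --- the paper handles this by substituting $w=(\sin_K r)^{(n-2)/2}u$, introducing the deficit $\vphi(r)=-2(\sin_K r)^2\int w_r^2\sqrt{a_K}\dif\theta+2\int|\nabla_S w|^2\sqrt{a_K}\dif\theta$, bounding $\vphi'$ from below via the Hessian comparison and the Riccati estimate $\gamma_{K,r}\ge-(n-1)(K-\kappa)$, and then integrating with quantitative lemmas such as $\left(1-\tfrac2n\right)(\sin_K r)^2Q(r)\le\int_0^r(\sin_K\rho)^2Q'(\rho)\dif\rho$. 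Without an analogue of that integration step, your outline does not close, so as it stands part (ii) is a plausible plan rather than a proof.
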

The proof of the theorem is given in Section~\ref{sec:proof-thm}.
\vspace{1ex}

\noindent\textbf{Remarks:}
\begin{itemize} 
\item It looks like in dimensions $n\geq 3$ the result
for negative curvature is better. However, this seems to be an
 artificial phenomenon since one could state part~(ii) of the theorem
with the function $\tilde{q}=q/(\sin_K r)^{n-1}$ replacing $q$:
Then, the RHS becomes
$-(n-2)K^{-}-(2n-3)(K-\kappa)$ which gives ``advantage'' to positive
curvature in dimensions $n\geq 3$ (see also the discussion in 
Section~\ref{subsec:orthogonal}).
  
\item For the constant non-positive (non-negative) curvature case 
 we get an exact convexity statement for $\log q$ (for $\log\tilde{q}$). 
\item Comparing to the result of Garofalo and Lin in~\cite{gar-lin86}, 
from part~(ii) one deduces that 
$\me^{6nr^2K}rq'(r)/q(r)$ is monotonically increasing 
for  $r<R$. Observe that
besides the explicit estimates of 
$\Lambda$ and $R$ mentioned in the introduction
this gives also a correction of the result in~\cite{gar-lin86}
in the power of $r$ in the exponential
term. Moreover, the statement here is more geometric in nature.
\end{itemize}

We now would like to have an integrated version of 
Theorem~\ref{thm:perturbed-convexity}. We restrict our attention
only to the case $\kappa=-K$, $K>0$.
We obtain a local doubling estimate for harmonic functions (see proof in Section~\ref{subsec:cor}).
\begin{corollary}
\label{cor:integrated-convexity}
Let $N$ be a complete Riemannian manifold of dimension $n$ with 
$|\Sec_N|\leq K$. Then
$$   \frac{q(2r)}{q(r)}\leq 
   \left(\frac{q(2s)}{q(s)}\right)^{1+32n r^2K}$$
for all $r<s<1/(4\sqrt {nK})$.
\end{corollary}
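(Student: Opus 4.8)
The plan is to read the doubling estimate off the approximate monotonicity of the frequency $N(r):=rq'(r)/q(r)$ that is already isolated in the third Remark following Theorem~\ref{thm:perturbed-convexity}. Since $|\Sec_N|\le K$, part~(ii) of that theorem applies with $\kappa=-K$, and the cited Remark then gives that
$$\Psi(r):=\me^{6nr^2K}\,N(r)=\me^{6nr^2K}\,\frac{rq'(r)}{q(r)}$$
is monotonically increasing on $(0,R)$, where $R=\min\!\big(\inj,\pi/(2\sqrt K)\big)$. I would first record that the radii in play are admissible, i.e.\ that $2s<R$: from $s<1/(4\sqrt{nK})$ we get $2s<1/(2\sqrt{nK})\le\pi/(2\sqrt K)$, while $2s<\inj$ is needed already for $q(2s)$ to be defined by~(\ref{def:q}). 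Hence $(0,2s]\subset(0,R)$, where $q>0$, $N>0$ by part~(i), and $\Psi$ is increasing.

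Next I would rewrite the two doubling ratios as integrals of $N$: since $(\log q)'(t)=q'(t)/q(t)=N(t)/t$, the substitution $t=r\tau$ gives
$$\log\frac{q(2r)}{q(r)}=\int_r^{2r}\frac{N(t)}{t}\,dt=\int_1^2 N(r\tau)\,\frac{d\tau}{\tau},\qquad \log\frac{q(2s)}{q(s)}=\int_1^2 N(s\tau)\,\frac{d\tau}{\tau}.$$
I would then compare the integrands at matched values of $\tau$: for each $\tau\in[1,2]$ one has $r\tau<s\tau\le 2s<R$, so the monotonicity of $\Psi$ gives $\Psi(r\tau)\le\Psi(s\tau)$, that is,
$$N(r\tau)=\me^{-6nKr^2\tau^2}\Psi(r\tau)\le\me^{-6nKr^2\tau^2}\Psi(s\tau)=\me^{6nK(s^2-r^2)\tau^2}N(s\tau)\le\me^{24nK(s^2-r^2)}N(s\tau),$$
where the last step uses $1\le\tau^2\le4$ and $s>r$. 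Integrating this against $d\tau/\tau$ over $[1,2]$ yields the clean intermediate bound
$$\log\frac{q(2r)}{q(r)}\le\me^{24nK(s^2-r^2)}\,\log\frac{q(2s)}{q(s)}.$$

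Finally I would put the prefactor into the announced form. Since $s^2-r^2\le s^2<1/(16nK)$, the exponent $24nK(s^2-r^2)$ does not exceed $3/2$, and the convexity estimate $\me^x\le1+3x$ on $[0,3/2]$ lets one replace $\me^{24nK(s^2-r^2)}$ by $1+72nK(s^2-r^2)$; exponentiating the last display then gives a local doubling estimate of exactly the advertised shape. The structural content of the argument — the monotonicity of $\Psi$, the integral representation, and the termwise comparison — is short and robust. The one genuinely delicate point is the constant bookkeeping at the end: to land on precisely the exponent $1+32nr^2K$ one has to choose and track the various absolute constants carefully (the $6n$ in the exponent of $\Psi$, the crude bound $\tau^2\le4$, and the exact value $1/(4\sqrt{nK})$ of the cutoff), and it is here that the hypothesis $s<1/(4\sqrt{nK})$ is used, since throughout one needs every radius that appears to stay below $R$.
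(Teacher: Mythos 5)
Your argument is correct in outline and is, at bottom, the same mechanism as the paper's: integrate the approximate log-convexity of $q$ over the two dyadic intervals and compare. The implementation differs, though. The paper never invokes the Remark about $\me^{6nr^2K}rq'(r)/q(r)$: it starts from part~(ii) of Theorem~\ref{thm:perturbed-convexity} with $\kappa=-K$, uses part~(i), Lemma~\ref{lem:xcot2} and the elementary bound $1/\cot_K r\le 2r$ to absorb both the cotangent-difference term and the zeroth-order term $-(5n-7)Kq$ into the coefficient of $q'$, obtaining $q''(r)+\frac{1+8nr^2K}{r}q'(r)-\frac{q'(r)^2}{q(r)}\ge 0$ for $r\sqrt K<\pi/3$ (inequality~(\ref{ineq:approx-thm})), and then integrates twice in $t=\log r$ using the integrating factor $\me^{4nK\me^{2t}}$ as in~(\ref{ineq:approx-thm-t}). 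You instead black-box the Remark's monotone quantity $\Psi(r)=\me^{6nr^2K}\,rq'(r)/q(r)$ and compare frequencies pointwise at matched radii $r\tau$ and $s\tau$. That is legitimate, but the Remark is only asserted in the paper (``one deduces''); to make your proof self-contained you must derive that monotonicity from part~(ii), and the derivation consists of exactly the manipulations the paper performs to reach~(\ref{ineq:approx-thm}) (part~(i) to dominate the right-hand side, Lemma~\ref{lem:xcot2} for $\cot_{-K}r-\cot_K r$, and the restriction to $r$ of size at most $\mathrm{const}/\sqrt K$). So the analytic work is relocated, not avoided.

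The one substantive inaccuracy is your closing claim that the exponent $1+32nr^2K$ is a matter of ``constant bookkeeping''. It is not reachable by your comparison, nor by the paper's: the correction factor coming from comparing the scales $r\tau$ and $s\tau$ (or from the integrating factor evaluated at $t_1=\log s$) inevitably involves the \emph{larger} radius, so your route gives the exponent $1+72nK(s^2-r^2)$ and the paper's own computation gives $1+32nKs^2$; neither is bounded by $1+32nr^2K$ when $r\ll s$. The $r^2$ in the statement of Corollary~\ref{cor:integrated-convexity} is evidently a slip for $s^2$ (consistently, the hypothesis $s<1/(4\sqrt{nK})$ is exactly the condition $16nK\me^{2t_1}<1$ used in the paper's last step with $t_1=\log s$), and the discrepancy is harmless downstream, where the exponent is only ever bounded by $2$. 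But you should state what your argument actually proves --- the doubling bound with exponent $1+72nK(s^2-r^2)$, or $1+32nKs^2$ if you follow the paper's integrating-factor route --- rather than promise the displayed exponent.
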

\section{Harmonic extension of Eigenfunctions}
\label{sec:extension}
In this section we recall a connection between harmonic functions and eigenfunctions found in~\cites{lin91, jer-leb96, naz-pol-sod05}.
Let $M$ be a Riemannian manifold of dimension $m$.
Let $u_\lambda$ be a $\lambda$-eigenfunction on $M$.
Consider the direct product Riemannian manifold $N=M\times\Rb$
of dimension $n=m+1$,
where the metric on $\Rb$ is the standard one.
Let $H$ be the following function on $N$:
$$\forall x\in M, t\in\Rb\ \ H(x, t) := u_\lambda(x)\cosh(\sqrt{\lambda}t)\ .$$
$H$ extends $u_\lambda$ to $N$ and is harmonic on $N$, since
the Laplacian on $N$ can be written as
$$\Delta_N u =\Delta_M u + \frac{\partial^2u}{\partial t^2}\ .$$

On $N$ we take geodesic coordinates $(r, \theta_1,\ldots, \theta_{n-1})$
in a neighborhood of the point $(p, 0)\in N$.
In these coordinates the metric $g_N$ takes  
the following form 
$$g_N=\dif r^2+r^2 a_{ij}\dif\theta^i\dif\theta^j\quad 1\leq i, j\leq n-1\, .$$
We let $\hat{\theta} = (\theta_1, \ldots, \theta_{m-1})$,
and $b_{ij}(r, \hat{\theta}):=a_{ij}(r,\hat{\theta}, 0)$.
$$g_M= dr^2 +r^2b_{ij}\dif\theta^i \dif\theta^j,\quad 1\leq i,j\leq m-1\ .$$
Accordingly, the equation $\Delta_N H = 0$ can be written
in these coordinates as
    $$H_{rr} + \left(\frac{n-1}{r}+\gamma(r,\theta)\right) H_r+
\frac{1}{r^2}\Delta_{S(r)}H = 0\ ,$$
where $\gamma(r,\theta)=(\sqrt{a})_r/\sqrt{a}$ with $a=\Det(a_{ij})$,
and
$\Delta_{S(r)}$ is the spherical Laplacian on the geodesic sphere of
radius $r$:
$$\Delta_{S(r)} H:= \frac{1}{\sqrt{a}}
\frac{\partial}{\partial\theta^i} \left(\sqrt{a}a^{ij}
\frac{\partial H}{\partial\theta^j}\right) $$

The following lemma relates $q(r)^{1/2}$, the spherical $L^2$-norm
of the harmonic function $H$ on an $(n-1)$-dimensional sphere of radius $r$, 
to $M_r(u_\lambda)$, the $L^{\infty}$-norm
 of the 
eigenfunction $u_\lambda$ on an $m=n-1$ dimensional \emph{ball}
 of radius~$r$.
Let $M_r(u_\lambda):=
\max_{B(p, r)} |u_\lambda(x)|$.
\begin{lemma}
\label{lem:Q-M}
Suppose $M$ is a complete Riemannian manifold with bounded geometry.
Fix $0<\alpha<1$, $\eps>0$. 
Then for all $0<r<\inj_M$,
$$
C_{\alpha,\eps}r^m (1+r\sqrt{\lambda})^{-m-\eps}M_{\alpha r}
(u_\lambda)^2\leq q(r)\leq 
C_2 r^m \me^{2r\sqrt{\lambda}}
M_r(u_\lambda)^2\ .
$$
where  $C_{\alpha, \eps}$ depends on $\alpha, \eps$ and 
the metric, and $C_2$ depends on the metric. 
\end{lemma}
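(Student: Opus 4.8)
The plan is to prove the two inequalities separately, in each case reducing the statement about the harmonic extension $H$ on the $(n-1)$-sphere $S(r)\subset N$ to a statement about the eigenfunction $u_\lambda$ on the $m$-ball $B(p,r)\subset M$, using the explicit form $H(x,t)=u_\lambda(x)\cosh(\sqrt\lambda t)$ and the product structure $N=M\times\Rb$.

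For the \emph{upper bound}, first I would write the spherical integral defining $q(r)$ and bound the integrand crudely: on $S(r)$ one has $|H(x,t)|\le M_r(u_\lambda)\cosh(\sqrt\lambda r)\le M_r(u_\lambda)\me^{r\sqrt\lambda}$, since every point of the geodesic sphere of radius $r$ in $N$ projects into $B(p,r)\subset M$ and has $|t|\le r$. Then $q(r)\le M_r(u_\lambda)^2\me^{2r\sqrt\lambda}\cdot\area(S(r))$, and since $M$ (hence $N$) has bounded geometry and $r<\inj_M$, the area of the geodesic sphere is comparable to that of the Euclidean sphere, $\area(S(r))\le C_2 r^{n-1}=C_2 r^m$. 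This gives the right-hand inequality directly; it is the routine half.

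For the \emph{lower bound}, the idea is to capture a definite fraction of the mass of $q(r)$ from a region where $H$ is controlled below by $M_{\alpha r}(u_\lambda)$. Pick a point $x_0\in \overline{B(p,\alpha r)}$ with $|u_\lambda(x_0)|=M_{\alpha r}(u_\lambda)$. I would use elliptic estimates (interior gradient bounds / Harnack for the eigenfunction equation on $M$, with constants depending on the metric) to say that $|u_\lambda|\ge \tfrac12 M_{\alpha r}(u_\lambda)$ on a ball around $x_0$ of radius comparable to $r/(1+r\sqrt\lambda)$ — this is the standard mechanism by which the ``wavelength'' $1/\sqrt\lambda$ enters, and it is where the factor $(1+r\sqrt\lambda)^{-m-\eps}$ and the $\eps$-loss come from (one needs the eigenfunction not to oscillate across the smaller ball, costing a volume factor $(r/(1+r\sqrt\lambda))^m$ and an extra $(1+r\sqrt\lambda)^{-\eps}$ slack to absorb the Harnack/elliptic constants uniformly). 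On the corresponding region in $N$ — points $(x,t)$ with $x$ in that small ball and $t$ in an interval of length comparable to the same radius — one has $|H(x,t)|=|u_\lambda(x)|\cosh(\sqrt\lambda t)\ge \tfrac12 M_{\alpha r}(u_\lambda)$, and this region lies inside the geodesic ball $B((p,0),r)$ of $N$ provided $x_0\in B(p,\alpha r)$ and the radius is small, which holds for $\alpha<1$. Integrating over the part of $S(r)$ (or, after an average over $r$, the annulus) meeting this region, and again comparing geodesic and Euclidean measures via bounded geometry, yields $q(r)\ge C_{\alpha,\eps} r^m (1+r\sqrt\lambda)^{-m-\eps} M_{\alpha r}(u_\lambda)^2$.

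The main obstacle is the lower bound, specifically making the ``propagation of the maximum'' argument quantitative and uniform: one must turn a pointwise maximum of $u_\lambda$ at $x_0$ into a lower bound on a ball of the correct, wavelength-dependent size, with constants depending only on the geometry of $M$ and not on $\lambda$. This is exactly the point where bounded geometry is used and where the precise exponent $(1+r\sqrt\lambda)^{-m-\eps}$ (rather than a clean $-m$) is forced. The rest — area comparisons of geodesic spheres, the elementary $\cosh$ bounds, reducing from sphere to ball — is bookkeeping, and I would cite \cite{naz-pol-sod05} for the shape of the argument in the conformal case and indicate the modifications needed in higher dimensions.
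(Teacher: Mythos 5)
Your upper bound is fine and is essentially the paper's argument: parametrize the geodesic sphere of $N=M\times\Rb$ over $B(p,r)\subset M$, bound $\cosh^2(\sqrt{\lambda}t)\le C\me^{2r\sqrt{\lambda}}$, and use bounded geometry to bound the area element, giving $q(r)\le C_2 r^m\me^{2r\sqrt{\lambda}}M_r(u_\lambda)^2$.

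The lower bound, however, has a genuine gap at its central step. The claim that $|u_\lambda|\ge\tfrac12 M_{\alpha r}(u_\lambda)$ on a ball of radius comparable to $r/(1+r\sqrt{\lambda})$ around the maximum point $x_0$ is false with constants depending only on the metric (and $\alpha,\eps$): gradient or Harnack estimates near $x_0$ control $|\nabla u_\lambda|$ only in terms of the sup on a strictly larger ball, which need not be comparable to $M_{\alpha r}(u_\lambda)$, and eigenfunctions do oscillate at scales far below the wavelength near points of high vanishing order. Concretely, in $\Rb^2$ take $u_\lambda(\rho,\theta)=J_k(\sqrt{\lambda}\rho)\cos(k\theta)$, which for $\rho\ll k/\sqrt{\lambda}$ behaves like $\rho^k\cos(k\theta)$: its maximum over $\overline{B(0,\alpha r)}$ is attained on the circle $\rho=\alpha r$, yet $u_\lambda$ vanishes within angular distance $\pi/(2k)$ of that point, so for $k$ large the set where $|u_\lambda|\ge\tfrac12 M_{\alpha r}(u_\lambda)$ near $x_0$ is far smaller than any ball of radius $c\,r/(1+r\sqrt{\lambda})$. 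Ruling out exactly this kind of behaviour is the doubling/growth bound the whole paper is establishing, so your mechanism is also circular in spirit. The paper avoids localizing at the max point altogether: writing the sphere integral over the ball and discarding the factors $\cosh^2\ge 1$ and $r/\sqrt{r^2-\rho^2}\ge 1$ gives $q(r)\ge\int_{B(p,r)}u_\lambda^2\,\dif\Vol_M$, and then the standard $L^2$-to-$L^\infty$ elliptic regularity estimate $M_{\alpha r}(u_\lambda)^2\le C_{\alpha,\eps}(1+r\sqrt{\lambda})^{m+\eps}r^{-m}\int_{B(p,r)}u_\lambda^2\,\dif\Vol_M$ concludes, using no information about where or how sharply the maximum is attained. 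Replacing your ``propagation of the maximum'' step by this $L^2$-to-sup bound repairs the argument and recovers the paper's proof.
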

\begin{proof}
Let us denote by $d\sigma(\hat{\theta})$ the standard volume form
on the unit sphere of dimension $m-1$.
\begin{multline*}
q(r)=2\int_0^r\int 
u_\lambda(\rho, \hat{\theta})^2
\cosh^2(\sqrt{\lambda}\sqrt{r^2-\rho^2}) \\
\cdot\rho^{m-1}\sqrt{b(\rho,\hat{\theta})}\frac{r}{\sqrt{r^2-\rho^2}}\, 
\dif{\hat{\theta}} \dif\rho\\ \leq 
C
M_r(u_\lambda)^2(3+\me^{2r\sqrt{\lambda}})
\int_0^r \int_{S^{m-1}}\rho^{m-1}\frac{r}{\sqrt{r^2-\rho^2}}\, 
d\sigma(\hat{\theta})d\rho \\=  C\omega_m r^m M_r(u_\lambda)^2
(3+\me^{2r\sqrt{\lambda}})\ ,
\end{multline*}
where we used the fact that the volume element 
is bounded from above by the metric 
(\cite{bish-cri64}*{Ch. 11, Th. 15}).

On the other hand, we have
$$
q(r)\geq 2 
\int_0^r\int u_\lambda(\rho,\hat{\theta})^2
 \rho^{m-1}\sqrt{b}\dif{\hat{\theta}} \dif\rho = 
 \int_{B^m(p, r)}u_\lambda^2\,\dif\Vol_M \ .$$
Hence, from elliptic regularity we get 
$$ q(r)\geq
C_{\alpha,\eps} M_{\alpha r}(u_\lambda)^2 r^m(1+r\sqrt{\lambda})^{-m-\eps}\ ,$$
where $C_{\alpha,\eps}$ depends on the metric, on $\alpha$ and on $\eps$.
\end{proof}
From Corollary~\ref{cor:integrated-convexity} and Lemma~\ref{lem:Q-M}
we find
\begin{theorem}
\label{thm:local-growth}
  Let $M$ be a complete Riemannian manifold of dimension $m$
with $|\Sec_M|\leq K$. 
Then for all $r\leq s<C/\sqrt{K}$
$$\frac{M_{3r}(u_\lambda)}{M_{2r}(u_\lambda)}
\leq C_1\me^{C_2 s\sqrt{\lambda}}
\left(\frac{M_{8s}(u_\lambda)}{M_{3 s}(u_\lambda)}\right)^{1+C_3r^2K}\ ,$$
where the constants $C_2, C_3$ denote
positive constants which depend only on the injectivity radius
of $M$, while $C_1$ depends on bounds on the metric,
its derivatives and its ellipticity constant.
\end{theorem}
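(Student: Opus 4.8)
The plan is to combine the local doubling estimate for the harmonic extension (Corollary~\ref{cor:integrated-convexity}) with the two-sided comparison between the spherical $L^2$-norm of $H$ on $N$ and the ball-wise $L^\infty$-norm of $u_\lambda$ on $M$ (Lemma~\ref{lem:Q-M}). The manifold on which we apply the Corollary is the product $N=M\times\Rb$; since $|\Sec_M|\le K$ and the extra $\Rb$-factor is flat, the sectional curvatures of $N$ lie in $[-K,K]$ as well, so $|\Sec_N|\le K$ and Corollary~\ref{cor:integrated-convexity} applies verbatim to the harmonic function $H(x,t)=u_\lambda(x)\cosh(\sqrt\lambda t)$.

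First I would fix $r\le s$ and apply Corollary~\ref{cor:integrated-convexity} with the radii $2r<3s$ (say), valid as long as $3s<1/(4\sqrt{nK})$, i.e.\ $s<C/\sqrt K$ with $C=C(n)$; this yields
$$\frac{q(4r)}{q(2r)}\le\left(\frac{q(6s)}{q(3s)}\right)^{1+C_3 r^2K},$$
with $C_3=128n$ (from $32n\cdot(2r)^2\le 128n r^2$). Next I would invoke Lemma~\ref{lem:Q-M} at each of the four radii. The key point is to use the inequalities in the \emph{favorable} direction: to bound $M_{3r}(u_\lambda)$ from above I use $M_{3r}(u_\lambda)^2\le C\, q(4r)/(r^m(1+\cdots))$ via the first (lower) inequality of the Lemma applied with $\alpha=3/4$ at radius $4r$; to bound $M_{2r}(u_\lambda)$ from below I use $q(2r)\le C_2(2r)^m \me^{4r\sqrt\lambda}M_{2r}(u_\lambda)^2$, i.e.\ the second (upper) inequality of the Lemma at radius $2r$; symmetrically $q(6s)$ is bounded above by $M_{6s}$-times-stuff (so by $M_{8s}$ since $M_\bullet$ is monotone) and $q(3s)$ is bounded below by $M_{(3/4)\cdot 3s}\ge M_{9s/4}\ge M_{2s}$ — here I would simply be generous with the numerical radii $3r,2r,8s,3s$ in the statement, choosing $\alpha$ and the concrete constants so the radii line up. Feeding these four estimates into the displayed $q$-inequality and collecting the powers of $r$, $s$, $(1+r\sqrt\lambda)$, $\me^{r\sqrt\lambda}$ into the constants gives
$$\frac{M_{3r}(u_\lambda)}{M_{2r}(u_\lambda)}\le C_1\,\me^{C_2 s\sqrt\lambda}\left(\frac{M_{8s}(u_\lambda)}{M_{3s}(u_\lambda)}\right)^{1+C_3 r^2K},$$
after taking square roots; the polynomial prefactors $(r/s)^{m}(1+r\sqrt\lambda)^{\pm(m+\eps)}$ are all absorbed into $C_1$ once one notes $r\le s<C/\sqrt K$ bounds $r\sqrt\lambda$ in terms of $s\sqrt\lambda$ and the exponential $\me^{C s\sqrt\lambda}$ dominates any such polynomial factor.

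**The main obstacle** is purely bookkeeping: matching the concrete radii ($3r$ vs.\ $2r$, $8s$ vs.\ $3s$) and the elliptic-regularity dilation factor $\alpha<1$ so that the chain of inequalities closes, and verifying that the constants $C_2,C_3$ depend only on $\inj_M$ (and $n$) while $C_1$ absorbs the metric-dependent constant $C_{\alpha,\eps}$ from Lemma~\ref{lem:Q-M} and the $\me^{r\sqrt\lambda}$ and polynomial terms. There is a small subtlety in that Lemma~\ref{lem:Q-M} is stated for $0<r<\inj_M$, so one must check $8s<\inj_M$, which holds after possibly shrinking the constant $C$ in $s<C/\sqrt K$ (noting $\inj_M$ enters $R=\min(\inj_M,\pi/2\sqrt{K^+})$ in the Corollary anyway); this is why $C_2,C_3$ are allowed to depend on $\inj_M$. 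No genuinely new idea is needed beyond the careful composition of the two cited results.
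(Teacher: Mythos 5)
Your proposal is correct and is exactly the paper's (unwritten) argument: Theorem~\ref{thm:local-growth} is presented there as an immediate combination of Corollary~\ref{cor:integrated-convexity}, applied to the harmonic extension $H(x,t)=u_\lambda(x)\cosh(\sqrt{\lambda}t)$ on $N=M\times\Rb$ (where indeed $|\Sec_N|\le K$), with the two-sided comparison of Lemma~\ref{lem:Q-M}. The one bookkeeping adjustment is to run the Corollary with the radius pair $(4s,8s)$ rather than $(3s,6s)$, so that the Lemma's lower bound at radius $4s$ with $\alpha=3/4$ gives exactly $M_{3s}(u_\lambda)$ in the denominator (your choice only yields $M_{9s/4}$, a slightly weaker bound); the remaining absorptions of $(1+r\sqrt{\lambda})^{m+\eps}$, $\me^{Cs\sqrt{\lambda}}$, and the bounded exponent $1+C_3r^2K$ into the constants are as you describe.
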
 
\remark\ The subindices $3r, 2r, 8s, 3s$
can be replaced by $\beta r, r, \gamma s, s$ respectively,
where $1<\beta<2$ and $\gamma>\beta$. The constants $C_2, C_3$ can
be taken to be independent of $\beta, \gamma$, while
$C_1\to \infty$ as $\gamma/\beta\to 1$. 
\section{Two global growth estimates}
\label{sec:global-growth}
In this section we deduce from the local inequality in 
Theorem~\ref{thm:local-growth} two global results in the compact case.
\subsection{Large values on large balls}
\begin{theorem}
\label{thm:large-value-large-ball}
Let $M$ be a compact Riemannian manifold of dimension $m$.
Then for all eigenfunctions $u_\lambda$ and $r>0$
$$\frac{\max_{B(x, r)} |u_\lambda|}{\max_M|u_\lambda|} 
\geq C_g(r, d_M)\me^{-C_2 d_M\sqrt{\lambda}}\quad \forall x\in M\ ,$$
where $d_M$ is the diameter of $M$,
\end{theorem}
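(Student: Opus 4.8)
\noindent\emph{Proof strategy.}
The plan is to carry a lower bound for $|u_\lambda|$ from a point $x_0$ where $|u_\lambda|$ attains its maximum $\mu:=\max_M|u_\lambda|$ to an arbitrary ball $B(x,r)$, by walking from $x_0$ to $x$ along a minimizing geodesic through a chain of overlapping geodesic balls and applying a doubling estimate at each link. We may assume $u_\lambda\not\equiv0$ and --- since the eigenvalues below any fixed bound form a finite set, on whose (finite–dimensional) eigenspaces $u\mapsto\max_{B(x,r)}|u|/\max_M|u|$ is continuous and strictly positive on the unit sphere --- that $\lambda$ is as large as we please. Because $\max_{B(x,r)}|u_\lambda|$ is nondecreasing in $r$, it is enough to treat $\rho:=\min(r,\rho_0)$ in place of $r$ for a small radius $\rho_0=\rho_0(M)$ depending only on the geometry, the loss being absorbed into $C_g(r,d_M)$. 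Pass to the harmonic extension $H(y,t)=u_\lambda(y)\cosh(\sqrt\lambda\,t)$ on $N=M\times\Rb$ and put $\Phi_p(R):=\int_{B_N((p,0),R)}H^2$. Two elementary observations will be used: the trivial bound $|H(y,t)|\le\mu\cosh(\sqrt\lambda\,|t|)$ yields $\Phi_p(R)\le C\mu^2 R^{\,n}\me^{2R\sqrt\lambda}$ for all $p$ and all $R\le\rho_0$; and $\Phi_{x_0}(\rho)\ge c\,\mu^2\rho^{\,n}(1+\rho\sqrt\lambda)^{-C}$ by the lower estimate in Lemma~\ref{lem:Q-M} (integrated over the radial variable), since $\max_{B(x_0,\,\cdot\,)}|u_\lambda|\ge|u_\lambda(x_0)|=\mu$.

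Fix points $x_0=p_0,p_1,\dots,p_N=x$ on a minimizing geodesic of length $\le d_M$ with $\dist(p_i,p_{i+1})\le\rho/4$ and $N\le 4d_M/\rho+1$. The per–link input is a doubling estimate at the centre $(p_{i+1},0)$,
\[
   \Phi_{p_{i+1}}(\tfrac54\rho)\ \le\ C\,\me^{C\rho\sqrt\lambda}\,\Phi_{p_{i+1}}(\rho);
\]
combined with the inclusion $B_N((p_i,0),\rho)\subset B_N((p_{i+1},0),\tfrac54\rho)$, which gives $\Phi_{p_i}(\rho)\le\Phi_{p_{i+1}}(\tfrac54\rho)$, it produces $\Phi_{p_{i+1}}(\rho)\ge C^{-1}\me^{-C\rho\sqrt\lambda}\Phi_{p_i}(\rho)$. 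Iterating over the $N$ links,
\[
   \Phi_x(\rho)\ \ge\ C^{-N}\me^{-C\rho\sqrt\lambda\,N}\,\Phi_{x_0}(\rho)\ \ge\ C_g(r,d_M)\,\me^{-C' d_M\sqrt\lambda}\,\mu^2\rho^{\,n},
\]
because $\rho N\le 4d_M+\rho$ makes the $\sqrt\lambda$–exponent $O(d_M\sqrt\lambda)$, while $C^{N}$ is independent of $\lambda$ and depends only on $\rho$ (hence on the metric) and on $d_M$, and the factor $(1+\rho\sqrt\lambda)^{-C}$ is harmless. Finally the upper estimate in Lemma~\ref{lem:Q-M} gives $\Phi_x(\rho)\le C\rho^{\,n}\me^{2\rho\sqrt\lambda}\bigl(\max_{B(x,\rho)}|u_\lambda|\bigr)^2$, hence $\max_{B(x,\rho)}|u_\lambda|\ge C_g(r,d_M)\,\me^{-C'' d_M\sqrt\lambda}\,\mu$; since $\max_{B(x,r)}|u_\lambda|\ge\max_{B(x,\rho)}|u_\lambda|$, this is the assertion.

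I expect the per–link doubling estimate to be the main obstacle, equivalently a uniform (in the centre) bound $R\,\Phi_p'(R)/\Phi_p(R)=O(R\sqrt\lambda)$ on the ``frequency'' of $H$ at scales $R\asymp\rho$. The convexity results of the preceding sections only say that this frequency is \emph{monotone} in the scale --- up to the curvature perturbation $\me^{O(R^2K^+)}$ of Theorems~\ref{thm:perturbed-convexity} and~\ref{thm:local-growth} --- so they reduce the problem to bounding the frequency at a single fixed scale, uniformly over the centre; securing this uniform control, which exploits that on a compact manifold the injectivity radius is positive and the sectional curvature bounded, is where the real work lies (the elementary bound $|H(\cdot,t)|\le\mu\cosh(\sqrt\lambda\,|t|)$ at least makes clear that the expected order of this frequency at scale $\rho$ is $O(\rho\sqrt\lambda)$). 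Granting it, the chain argument above is routine: the $\lambda$–independent per–link losses accumulate only into the geometric constant $C_g(r,d_M)$, whereas the losses $\me^{C\rho\sqrt\lambda}$, each proportional to the link scale $\rho$, telescope over the $\asymp d_M/\rho$ links into $\me^{O(d_M\sqrt\lambda)}$.
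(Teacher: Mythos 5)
Your chain-of-balls skeleton matches the paper's strategy (walk from the maximum point to $x$, lose a factor $C^{-1}\me^{-C\sqrt{\lambda}\cdot(\cdot)}$ per link, convert between $u_\lambda$ and the harmonic extension via Lemma~\ref{lem:Q-M}), but the step you yourself flag as ``the main obstacle'' is a genuine gap, and it is not a technicality: the per-link estimate $\Phi_{p}(\tfrac54\rho)\le C\,\me^{C\rho\sqrt{\lambda}}\,\Phi_{p}(\rho)$, uniform over the centre $p$, is essentially the Donnelly--Fefferman doubling bound itself. Nothing proved up to this point in the paper supplies it: Theorem~\ref{thm:perturbed-convexity}, Corollary~\ref{cor:integrated-convexity} and Theorem~\ref{thm:local-growth} are purely \emph{comparative} --- they bound the doubling ratio at a small scale by the doubling ratio at a larger scale \emph{at the same centre} --- and give no absolute, centre-uniform bound at any fixed scale. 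In the paper's logic such a uniform bound (Theorem~\ref{thm:DF-bound}) is a \emph{consequence} of the present theorem combined with Theorem~\ref{thm:local-growth}, so taking it as the per-link input makes your argument circular; and the sharpened form you ask for, with exponent proportional to the link scale $\rho$ uniformly in $p$ (needed for your linear telescoping into $\me^{O(d_M\sqrt{\lambda})}$), is if anything stronger than what the paper ultimately proves.

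The idea you are missing is the normalization trick that lets the paper dispense with any a priori frequency bound. Normalize $\max_M|u_\lambda|=1$ and take $r=s$ in Theorem~\ref{thm:local-growth}; then the unknown large-scale ratio is controlled trivially because its numerator satisfies $M_{8r}(u_\lambda)\le 1$, and one gets the absolute one-step inequality
\begin{equation*}
M_{2r_0}(u_\lambda)(x_k)\ \ge\ C_1^{-1}\me^{-C_2 r_0\sqrt{\lambda}}\,\Bigl(\max_{B(x_{k-1},2r_0)}|u_\lambda|\Bigr)^{3},
\end{equation*}
valid at every centre. This is a propagation of \emph{largeness} with a fixed power loss per link rather than a doubling hypothesis, and chaining it from the maximum point $x_0$ (where the left-hand quantity equals $1$) yields the theorem directly, with all $\lambda$-independent losses absorbed into $C_g(r,d_M)$. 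If you want to salvage your write-up, replace your per-link doubling assumption by this inequality; the rest of your chaining and the two-sided use of Lemma~\ref{lem:Q-M} then go through (and your preliminary reduction to large $\lambda$ via finite-dimensionality of low eigenspaces becomes unnecessary). Note also that with a power loss per step the accumulated exponent in $\sqrt{\lambda}$ is no longer obviously linear in the number of links, so the clean bookkeeping you describe has to be revisited rather than taken for granted.
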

\begin{proof}
Normalize $u_\lambda$ so $\max_M |u_\lambda|=1$.
Take $r=s$ in Theorem~\ref{thm:local-growth}. We get
\begin{equation}
\label{ineq:growth-cor}
M_{3r}(u_\lambda)^{2+C_3r^2K} \leq 
C_1\me^{C_2 r\sqrt{\lambda}}M_{8r}(u_\lambda)^{1+C_3r^2K}
M_{2r}(u_\lambda)
\leq C_1\me^{C_2 r\sqrt{\lambda}} M_{2r}(u_\lambda)\ .
\end{equation}

Let $|u_\lambda(x_0)|=1$. Fix $r_0>0$ small enough in order to apply
Theorem~\ref{thm:local-growth}.
Take a point $x$ in $M$. There exists a sequence of points 
$x_0, x_1, \ldots x_N=x$, such that $d(x_k, x_{k+1}) < r_0$,
for $0\leq k\leq N-1$, where 
$N$ only depends on $r_0$ and the diameter of $M$.
Inequality~(\ref{ineq:growth-cor}) gives
\begin{multline}
\label{ineq:one-step}
\max_{B(x_k, 2r_0)}|u_\lambda|\geq 
C_1^{-1}\me^{-C_2 r_0\sqrt{\lambda}} 
\max_{B(x_{k}, 3r_0)}|u_\lambda|^{2+C_2r^2K}\geq \\
C_1^{-1}\me^{-C_2 r_0\sqrt{\lambda}}
\max_{B(x_{k-1}, 2r_0)} |u_\lambda|^3\ .
\end{multline}

Multipliying the inequalities~(\ref{ineq:one-step}) for $1\leq k\leq N$
gives 
$$\max_{B(x, 2r_0)}|u_\lambda|
\geq C_1^{-N}\me^{-C_2 N r_0\sqrt{\lambda}} \geq
C_1^{-N}\me^{-C_2d\sqrt{\lambda}}\ .$$
\end{proof}

\subsection{Global DF growth Bound}
\begin{theorem}[\cite{don-fef88}]
\label{thm:DF-bound}
For all eigenfunctions $u_\lambda$, $x\in M$ and $r>0$
$$\frac{\max_{B(x, 3r)} |u_\lambda|}{\max_{B(x, 2r)} |u_\lambda|} 
\leq C_g(d_M)\me^{C_2d_M\sqrt{\lambda}}\ .$$
\end{theorem}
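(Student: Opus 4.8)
The plan is to deduce Theorem~\ref{thm:DF-bound} from the local growth estimate in Theorem~\ref{thm:local-growth} by the same chaining argument used for Theorem~\ref{thm:large-value-large-ball}, running the inequality ``in the other direction'' to control large balls by the global maximum. First I would fix $x\in M$ and reduce to a fixed small radius $r_0$: using Theorem~\ref{thm:local-growth} with $r=s=r_0$ small enough (depending only on $K$ and $\inj_M$), inequality~(\ref{ineq:growth-cor}) gives
$$
M_{3r_0}(u_\lambda) \leq \bigl(C_1\me^{C_2 r_0\sqrt\lambda}\, M_{2r_0}(u_\lambda)\bigr)^{1/(2+C_3r_0^2K)} \leq C_1\me^{C_2 r_0\sqrt\lambda}\, M_{2r_0}(u_\lambda)^{1/2},
$$
so that, after normalizing $\max_M|u_\lambda|=1$ (hence $M_\rho(u_\lambda)\le 1$ for all $\rho$), a single step of the chain only \emph{improves} as one passes to larger balls; concatenating the relation $M_{2r_0}(u_\lambda)\le\max_{B(x,3r_0)}|u_\lambda|$ along a chain of $N=N(r_0,d_M)$ overlapping balls from $x$ to a point $x_0$ where $|u_\lambda(x_0)|=1$, one gets $\max_{B(x,2r_0)}|u_\lambda|\ge C_g(d_M)\me^{-C_2d_M\sqrt\lambda}$, i.e. an \emph{a priori} lower bound on $M_{2r_0}(u_\lambda)$ — this is essentially Theorem~\ref{thm:large-value-large-ball}.

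Next I would convert that lower bound on the $2r_0$-ball into the desired three-balls ratio. The point is that for \emph{small} $r$ the conclusion is immediate from Theorem~\ref{thm:large-value-large-ball} together with the trivial bound $\max_{B(x,3r)}|u_\lambda|\le\max_M|u_\lambda|=1$: indeed
$$
\frac{\max_{B(x,3r)}|u_\lambda|}{\max_{B(x,2r)}|u_\lambda|} \le \frac{1}{\max_{B(x,2r)}|u_\lambda|} \le \frac{1}{C_g(r,d_M)\me^{-C_2d_M\sqrt\lambda}} = C_g(r,d_M)\me^{C_2d_M\sqrt\lambda}.
$$
So it suffices to establish the inequality for $r$ in a fixed small range, say $r\le r_0$, with a constant depending only on $r_0$ (hence absorbable into $C_g(d_M)$), and then handle $r>r_0$ separately. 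For $r>r_0$ the ratio is bounded trivially by $1/\min_{\rho\ge 2r_0}M_\rho(u_\lambda)\le 1/M_{2r_0}(u_\lambda)$, and the already-established lower bound on $M_{2r_0}(u_\lambda)$ from Theorem~\ref{thm:large-value-large-ball} closes this case. Thus the whole theorem follows by combining the two regimes, each controlled by $C_g(d_M)\me^{C_2 d_M\sqrt\lambda}$.

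The step I expect to be the main obstacle is making the small-$r$ regime genuinely quantitative without circularity: Theorem~\ref{thm:local-growth} requires $r\le s<C/\sqrt K$ and its constant $C_1$ blows up as the ratio of the outer to inner radii degenerates, so one must choose the chaining radius $r_0$ (and the exponents $\beta,\gamma$ in the Remark following Theorem~\ref{thm:local-growth}) once and for all, depending only on $\inj_M$ and $K$, and then verify that iterating the local estimate $N$ times produces a constant of the clean form $C_g(d_M)\me^{C_2d_M\sqrt\lambda}$ with $C_2$ depending only on $\inj_M$. The bookkeeping is exactly parallel to the proof of Theorem~\ref{thm:large-value-large-ball}: one telescopes a product of $N$ copies of~(\ref{ineq:growth-cor}), uses $Nr_0\le C\, d_M$, and checks that the powers $2+C_3r^2K$ stay bounded away from problems because $r\le r_0$ is small. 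No new analytic input is needed beyond Theorem~\ref{thm:local-growth} and the normalization $\max_M|u_\lambda|=1$.
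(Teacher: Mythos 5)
There is a genuine gap, and it sits exactly where you flag it: the small-$r$ regime, which is the actual content of the Donnelly--Fefferman bound. Your displayed ``immediate'' estimate for small $r$ bounds the ratio by $1/\max_{B(x,2r)}|u_\lambda|$ and then invokes Theorem~\ref{thm:large-value-large-ball} at radius $2r$; but the constant there is $C_g(r,d_M)$, and it necessarily degenerates as $r\to 0$ (take $x$ a nodal point: $\max_{B(x,2r)}|u_\lambda|\to 0$ while $\max_{B(x,3r)}|u_\lambda|$ is comparable to it, so no $r$-uniform bound can come from a lower bound on the denominator alone). Hence this cannot be ``absorbed into $C_g(d_M)$''. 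Your proposed repair --- telescoping $N$ copies of~(\ref{ineq:growth-cor}) with a chaining radius $r_0$ fixed once and for all --- only reproves Theorem~\ref{thm:large-value-large-ball} at the fixed scale $r_0$ and says nothing about $M_{3r}(u_\lambda)/M_{2r}(u_\lambda)$ for $r\ll r_0$; chaining at scale $r$ instead would need $N\sim d_M/r$ steps and produce a factor $C_1^{N}$ blowing up as $r\to 0$. So the case $r\le r_0$ is never actually established.

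The missing idea is the two-radius form of Theorem~\ref{thm:local-growth}, i.e.\ using it with a \emph{small} inner radius $r$ and a \emph{fixed} outer radius $s=R$: for $r<R$,
$$\frac{M_{3r}(u_\lambda)}{M_{2r}(u_\lambda)}\leq C_1\me^{C_2R\sqrt{\lambda}}\left(\frac{M_{8R}(u_\lambda)}{M_{3R}(u_\lambda)}\right)^{1+C_3r^2K}\ ,$$
which transfers the doubling ratio at the arbitrary small scale $r$ to a ratio at the fixed scale $R$. There one bounds $M_{8R}(u_\lambda)\le\max_M|u_\lambda|$ and applies Theorem~\ref{thm:large-value-large-ball} to $M_{3R}(u_\lambda)$, so the bracket is at most $C_g(d_M)\me^{C_2d_M\sqrt{\lambda}}$ with a constant independent of $r$, and the exponent $1+C_3r^2K\le 2$ for $r<R$. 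Together with your (correct) treatment of $r\ge R$ via the trivial bound and Theorem~\ref{thm:large-value-large-ball}, this closes the proof in one line; no chaining is needed at this stage --- the chaining lives only inside the proof of Theorem~\ref{thm:large-value-large-ball}.
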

\begin{proof}
Let $R>0$ be as in Theorem~\ref{thm:large-value-large-ball}.
If $r\geq R$ the theorem follows from  
Theorem~\ref{thm:large-value-large-ball}.
Else,
  Theorems~\ref{thm:local-growth}
and~\ref{thm:large-value-large-ball} tell us
that 
$$\frac{M_{3r}(u_\lambda)}{M_{2r}(u_\lambda)}
\leq C_g \me^{C_2 R\sqrt{\lambda}} 
\left(\frac{M_{8R}(u_\lambda)}{M_{3R}(u_\lambda)}\right)^{2}\leq 
C_g(d_M)\me^{2C_2d_M\sqrt{\lambda}}\ .$$
\end{proof}

\subsection{Outline of the proof of Yau's Conjecture for real analytic manifolds}
\label{subsec:outline}
Yau's conjecture for $C^\infty$ closed compact Riemannian manifolds
is
\begin{conj}[\cite{yau82}]
\label{conj:yau}
Let $u_\lambda$ be a $\lambda$-eigenfunction on $M$.
Then,
$$C_1\sqrt{\lambda}\leq\Vol_{n-1}(\{u_\lambda=0\}) \leq C_2\sqrt{\lambda}\ ,$$
where $C_1$, $C_2$ are constants independent of $\lambda$. 
\end{conj}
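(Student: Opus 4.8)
The plan is to establish Conjecture~\ref{conj:yau} for real analytic metrics, following Donnelly and Fefferman~\cite{don-fef88}; the novelty here is only that the one non‑elementary ingredient, the DF‑growth bound, may now be quoted from Theorem~\ref{thm:DF-bound} with the elementary proof above, so that no Carleman inequality is needed. Write $n=\dim M$, normalize $\|u_\lambda\|_{L^2(M)}=1$, and dispose of the finitely many small eigenvalues by compactness. I would first reduce to a local statement: cover $M$ by finitely many geodesic balls $B(p,r_0)$ of a fixed small radius, so that it suffices to prove $\Vol_{n-1}(\{u_\lambda=0\}\cap B(p,r_0))\le C_g\sqrt\lambda$ for every $p$, and $\ge c_g\sqrt\lambda$ for $p$ ranging over a definite proportion of the balls.

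Next I would translate Theorem~\ref{thm:DF-bound} into a doubling statement: chaining it along a bounded chain of balls, and using interior elliptic estimates to pass between sup‑norms and $L^2$‑norms, gives that the doubling index $\beta_p(r):=\log\big(\sup_{B(p,2r)}|u_\lambda|/\sup_{B(p,r)}|u_\lambda|\big)$ satisfies $\beta_p(r)\le C_g\sqrt\lambda$ for $r<r_0$. Since $g$ is real analytic, $u_\lambda$ is real analytic by elliptic regularity and extends to a holomorphic function $\tilde u_\lambda$ on a complex neighbourhood of $B(p,r_0)$ of size independent of $\lambda$; a Cauchy‑estimate argument converts the doubling bound into $\sup|\tilde u_\lambda|\le e^{C_g\sqrt\lambda}\sup_{B(p,r_0)}|u_\lambda|$ on a slightly smaller complex polydisc. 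For the \emph{upper bound} I would then argue line by line: for a real segment $\ell$ meeting $B(p,r_0/2)$ the restriction $u_\lambda|_\ell$ is the trace of the holomorphic function $\tilde u_\lambda|_{\tilde\ell}$ on a complex disc, so by the growth bound and the one‑variable Jensen formula it has at most $C_g\sqrt\lambda$ real zeros on $\ell$; integrating this estimate over the lines meeting $B(p,r_0/2)$ — the Cauchy–Crofton formula — bounds $\Vol_{n-1}(\{u_\lambda=0\}\cap B(p,r_0/2))$ by $C_g\sqrt\lambda$, and summing over the cover finishes the upper bound.

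For the \emph{lower bound} I would start from Brüning's identity — obtained by integrating $\Delta u_\lambda=-\lambda u_\lambda$ by parts over $\{u_\lambda>0\}$ and $\{u_\lambda<0\}$, using that $u_\lambda$ changes sign since it is $L^2$‑orthogonal to the constants — namely $2\int_{\{u_\lambda=0\}}|\nabla u_\lambda|\,d\mathcal H^{n-1}=\lambda\int_M|u_\lambda|\,d\Vol_M$. Combined with the eigenfunction gradient bound $\|\nabla u_\lambda\|_\infty\le C\sqrt\lambda\,\|u_\lambda\|_\infty$, this reduces matters to a lower bound for $\int_M|u_\lambda|$ localized to balls on which $u_\lambda$ does not decay too quickly; producing such balls and the required $L^1$‑estimate is where the growth bound of Theorem~\ref{thm:DF-bound} and the real‑analytic structure (again through complexification and a Jensen‑type count of zeros) re‑enter, and I would follow~\cite{don-fef88} for this step.

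I expect the lower bound to be the main obstacle. The upper‑bound argument needs only an \emph{upper} bound on the number of zeros along lines, which the growth estimate supplies directly; the lower bound must instead show that the nodal set is genuinely abundant — that $u_\lambda$ really oscillates at frequency of order $\sqrt\lambda$ over a region of definite size — and it is precisely at this point that real analyticity is indispensable (for general $C^\infty$ metrics in dimension $\ge 3$ the lower bound is the half of the conjecture that remains open). Finally, I would append the brief remark that these two estimates, applied to a real analytic $M$, are exactly the statement of Yau's conjecture for such manifolds, which was the original application in~\cite{don-fef88}.
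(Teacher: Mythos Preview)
Your upper-bound outline agrees with the paper's: complexify using real analyticity, control the growth of the holomorphic extension via Theorem~\ref{thm:DF-bound}, bound the number of real zeros on each line by Jensen's formula, and integrate over lines with Crofton's formula.

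For the lower bound, however, your organizing device is not the one the paper (following~\cite{don-fef88}) uses. The paper does \emph{not} start from the divergence identity $2\int_{\{u_\lambda=0\}}|\nabla u_\lambda|=\lambda\int_M|u_\lambda|$; and what Br\"uning~\cite{bru78} is cited for is not that identity but the fact that every ball of radius $C/\sqrt\lambda$ contains a nodal point, so that one may cover half the volume of $M$ by disjoint balls of wavelength scale centred at zeros of $u_\lambda$. The paper's outline then proceeds in two steps: (a) on any such ball where the growth of $u_\lambda$ is bounded by a fixed constant (say $20$), a mean-value plus isoperimetric argument gives a definite lower bound for the nodal volume inside the ball; (b) at least a fixed fraction (say $10\%$) of the balls have this bounded growth. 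Step~(b) is the crux, and the mechanism is quite specific: complexify, reduce by an averaging argument to a one-dimensional statement, approximate by a polynomial of degree $\sim\sqrt\lambda$, and use the Hilbert transform to show the polynomial is close to its average on enough of the small intervals. Your sketch, by contrast, reduces the lower bound to a localized $L^1/L^\infty$ estimate and then simply points to~\cite{don-fef88} at exactly the place where the real idea should appear; since the identity you start from is not the framework of~\cite{don-fef88}, ``following~\cite{don-fef88}'' from that point is not straightforward. A small side remark: in the smooth category it is not clearly the lower bound that is the harder half; historically the upper bound has been regarded as at least as resistant.
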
 

The conjecture was proved in the case of real analytic Riemannian metrics in~\cite{don-fef88}. A major ingredient of the proof
was Theorem~\ref{thm:DF-bound}. We outline here the idea:
\vspace{1ex}

\noindent\textbf{Lower bound.}
Let $B\subset M$ be a ball of radius $r=C/\sqrt{\lambda}$
such that $u_{\lambda}$ vanishes at the center of $B$. One can cover, say, $1/2$
of the volume of $M$ by a disjoint collection~$\Bl$ of such balls (\cite{bru78}).
One observes that if the growth of $u_\lambda$ in a ball $B$ 
is smaller than, say, $20$ then one can control 
from below the size of the nodal set in $B$.
 This can be seen for harmonic functions
in the unit ball using the mean value principle 
and the isoperimetric inequality, and can be adapted to
 eigenfunctions on balls of radius $C/\sqrt{\lambda}$.

The main claim is that on at least, say, $10\%$ of the balls
in the collection $\Bl$ the growth is bounded by $20$. 

We can assume $M$ is contained in one coordinate neighbourhood $U=\{|x|<30\}\subset\Rb^n$.
One can continue the function $u_\lambda$ to a holomorphic 
function~$F$ on $U\times U\subset \Cb^{n}$. 
We assume $F|_{U\times \{0\}} = u_\lambda$ and 
we set $Q\subset U\times\{0\}$ to be a Euclidean real cube.
The point is that due to Theorem~\ref{thm:DF-bound} the growth of $F^2$
in $U\times U$ is controlled by $\sqrt{\lambda}$.

We subdivide $Q$ to small sub-cubes $Q_\nu$ of sides $1/\sqrt{\lambda}$.
The next idea is that in order to bound the growth
of $F$ in a cube $Q_\nu$ by a constant independent of $\lambda$
it is enough to say that $F$ is close to its average on $Q_\nu$
for most of the points in $Q_\nu$. This property behaves well under
averaging. Therefore, it can be reduced to a dimension one problem:
 $Q=[-1,1]$, $B=|z|<2$, $F$ is 
a holomorphic function defined on $B$, $F$ is real on the real line.
and its growth is bounded by $\sqrt{\lambda}$.
First we replace $F$ by a polynomial $P$ of degree $\sqrt{\lambda}$.
One divides $Q$ into segments $Q_\nu$ of size $1/\sqrt{\lambda}$.
One has to show that $P$ is close to its average on $10\%$ of these
intervals. To that end the Hilbert transform is called.
\vspace{1ex}

\noindent\textbf{Upper bound.}
The size of the nodal set is estimated from above by Crofton's formula.
To estimate from above the number of zeros  on a real line interval
$I\subset Q$
one uses Jensen's formula in a complex line $\Cb$ containing $I$. 
For this one has to have a bound on the growth of $F$ in $U\times U$.
 
\section{Proof of Theorem~\ref{thm:perturbed-convexity}}
\label{sec:proof-thm}
\subsection{Preliminary geometric estimates}
\label{sec:eps-estimates}
Let $N$ be a Riemannian manifold of dimension $n$.
Fix a point $p$, and let $r(x)=\dist(x, p)$.
Let 
$$\gamma_K= \Delta r - (n-1)\cot_K r\ .$$
$\gamma_K$ is controlled by the curvature of $N$:
\begin{lemma}
\label{lem:gammaK}
If $\kappa\leq \Sec_N\leq K$
then $0\leq\gamma_K\leq (n-1)(\cot_{\kappa} r-\cot_K r)$.
\end{lemma}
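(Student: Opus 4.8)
The plan is to derive both inequalities from the Laplacian comparison theorem applied to the distance function $r(x) = \dist(x,p)$. Recall that in geodesic polar coordinates the metric is $g_N = dr^2 + r^2 a_{ij}\,d\theta^i d\theta^j$, and one has the identity $\Delta r = \partial_r \log\sqrt{a} + (n-1)/r$; equivalently, writing the area density of the geodesic sphere $S(r)$ as $J(r,\theta)$, one has $\Delta r = \partial_r \log J$. The function $(\sin_K r)^{n-1}$ is precisely the area density of the geodesic sphere in the model space of constant curvature $K$, and $(n-1)\cot_K r = \partial_r \log\big((\sin_K r)^{n-1}\big)$ is its logarithmic derivative. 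So $\gamma_K = \partial_r\log\big(J(r,\theta)/(\sin_K r)^{n-1}\big)$, and the two asserted inequalities say exactly that this density ratio is nondecreasing in $r$ when $\Sec_N \le K$, and that its growth rate is controlled above by the corresponding ratio between the model spaces of curvature $\kappa$ and $K$ when $\kappa \le \Sec_N \le K$.

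First I would invoke the standard Laplacian comparison theorem: if $\Sec_N \le K$ then $\Delta r \ge (n-1)\cot_K r$ (for $r$ within the injectivity radius, and $r < \pi/(2\sqrt{K^+})$ so that $\cot_K r$ is defined and positive), which is precisely $\gamma_K \ge 0$. For the upper bound, if $\kappa \le \Sec_N$ then the comparison theorem in the other direction gives $\Delta r \le (n-1)\cot_\kappa r$. Subtracting $(n-1)\cot_K r$ from both sides yields
$$
\gamma_K = \Delta r - (n-1)\cot_K r \le (n-1)\big(\cot_\kappa r - \cot_K r\big),
$$
which is the claimed bound. Since $\kappa \le K$ implies $\cot_\kappa r \le \cot_K r$ is false — in fact $\cot_K r$ is increasing in $K$, so $\cot_\kappa r - \cot_K r \le 0$? — I need to be careful here: I would check the monotonicity of $K \mapsto \cot_K r$ and confirm the sign, expecting that smaller curvature gives a larger spherical density growth rate, hence $\cot_\kappa r \ge \cot_K r$ for $\kappa \le K$, so the right-hand side is nonnegative and consistent with the lower bound $\gamma_K \ge 0$.

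The only genuine subtlety — and the step I expect to require the most care — is making sure the Laplacian comparison theorem is applied on the correct domain: $r$ must be smooth (no cut locus issues), which is guaranteed for $r < \inj(N)$, and one needs $r < \pi/(2\sqrt{K^+})$ for the model comparison functions $\cot_K r$ to stay in their regime of validity; these are exactly the constraints already built into $R = \min(\inj(M), \pi/(2\sqrt{K^+}))$ in Theorem 2.2, so no extra hypothesis is needed. I would also note for completeness that the one-dimensional Riccati/Sturm comparison underlying the Laplacian comparison theorem can be spelled out directly in these polar coordinates (the function $\Delta r$ satisfies $\partial_r(\Delta r) + \frac{(\Delta r)^2}{n-1} \le \partial_r(\Delta r) + |\nabla dr|^2 = -\Ric(\partial_r,\partial_r) \le -(n-1)\kappa$ and similarly with $K$ and the reverse inequality), but I would cite the comparison theorem rather than reprove it.
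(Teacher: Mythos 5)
Your proposal is correct and follows essentially the same route as the paper, which deduces both inequalities directly from the Hessian comparison theorem: your Laplacian comparison statements $(n-1)\cot_K r \le \Delta r \le (n-1)\cot_\kappa r$ are exactly the traced form of the Hessian bounds $(\cot_K r)\|X\|^2\leq \Hess(r)(X,X)\leq(\cot_\kappa r)\|X\|^2$ used in the paper. Your momentary hesitation about signs resolves the right way: $K\mapsto\cot_K r$ is decreasing for fixed $r$, so $\kappa\le K$ gives $\cot_\kappa r\ge\cot_K r$ and the upper bound is indeed nonnegative, consistent with $\gamma_K\ge 0$.
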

\begin{proof}
Both parts directly follow from the Hessian Comparison Theorem
(\cites{bish-cri64, schoen-yau94}).
\end{proof}
\begin{lemma}
\label{lem:gammaK'}
Suppose $\kappa\leq\Sec_N\leq K$.
Then, we have 
$$\gamma_{K,r}\geq -(n-1)(K-\kappa)\ .$$
\end{lemma}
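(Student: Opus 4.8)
The plan is to differentiate $\gamma_K$ in the radial direction and bound the two resulting terms by the curvature, in exactly the way the model term $(n-1)\cot_K r$ is governed by $K$. Since $r$ is smooth on the punctured geodesic ball $\{0<r<\inj(p)\}$, I would first apply the Bochner formula to the function $r$; because $|\nabla r|\equiv 1$ it reads
\[
0=\tfrac{1}{2}\Delta|\nabla r|^{2}=|\Hess r|^{2}+\partial_r(\Delta r)+\Ric(\partial_r,\partial_r),
\]
so $\partial_r(\Delta r)=-|\Hess r|^{2}-\Ric(\partial_r,\partial_r)$. On the model side, differentiating $\cot_K r=(\sin_K r)'/(\sin_K r)$ and using $(\sin_K r)''=-K\sin_K r$ gives the Riccati identity $\partial_r(\cot_K r)=-K-\cot_K^{2}r$, equivalently $K+\cot_K^{2}r=1/(\sin_K r)^{2}$. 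Subtracting,
\[
\gamma_{K,r}=\partial_r(\Delta r)-(n-1)\,\partial_r(\cot_K r)=-|\Hess r|^{2}-\Ric(\partial_r,\partial_r)+(n-1)\bigl(K+\cot_K^{2}r\bigr).
\]

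Next I would estimate the two negative terms from below, using the same comparison input as in Lemma~\ref{lem:gammaK}. By the Hessian Comparison Theorem, for $r<R$ the eigenvalues of $\Hess r$ restricted to $\partial_r^{\perp}$ lie in $[\cot_K r,\cot_\kappa r]$; since $R\le\pi/(2\sqrt{K^{+}})$ forces $0<\cot_K r\le\cot_\kappa r$, these eigenvalues are positive and at most $\cot_\kappa r$, so $|\Hess r|^{2}\le(n-1)\cot_\kappa^{2}r$. The upper curvature bound gives $\Ric(\partial_r,\partial_r)=\sum_{i=1}^{n-1}\Sec_N(\partial_r,e_i)\le(n-1)K$. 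Substituting these into the last display and cancelling the $(n-1)K$ terms yields
\[
\gamma_{K,r}\ \ge\ -(n-1)\cot_\kappa^{2}r+(n-1)\cot_K^{2}r\ =\ -(n-1)\bigl(\cot_\kappa^{2}r-\cot_K^{2}r\bigr).
\]

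It remains to see that $\cot_\kappa^{2}r-\cot_K^{2}r\le K-\kappa$ on $(0,R)$, and here the identity $K+\cot_K^{2}r=1/(\sin_K r)^{2}$ (and its analogue for $\kappa$) does the work:
\[
\cot_\kappa^{2}r-\cot_K^{2}r=(K-\kappa)+\Bigl(\tfrac{1}{(\sin_\kappa r)^{2}}-\tfrac{1}{(\sin_K r)^{2}}\Bigr)\ \le\ K-\kappa,
\]
since $\kappa\le K$ implies $0<\sin_K r\le\sin_\kappa r$ on $(0,R)$ — both are positive because $r<R\le\pi/(2\sqrt{K^{+}})$, and the inequality is Sturm's comparison for $y''+Ky=0$ against $y''+\kappa y=0$ with $y(0)=0$, $y'(0)=1$ — so the bracketed term is nonpositive. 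Combining the last two displays gives $\gamma_{K,r}\ge-(n-1)(K-\kappa)$, as claimed.

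The step to get right is the direction of the two estimates: $|\Hess r|^{2}$ must be bounded from \emph{above}, via the eigenvalue form of the Hessian comparison together with the positivity of $\cot_K r$ on $(0,R)$ — using Cauchy--Schwarz in the form $|\Hess r|^{2}\ge(\Delta r)^{2}/(n-1)$ would give the wrong sign — and it is the \emph{upper} curvature bound that controls $\Ric$. The clean Riccati identity $K+\cot_K^{2}r=1/(\sin_K r)^{2}$ is what makes the leftover term collapse exactly to $K-\kappa$. Verifying that all comparison inequalities are valid on the whole interval $(0,R)$ — no conjugate points, all relevant $\cot$'s positive — is routine given $R=\min(\inj(M),\pi/(2\sqrt{K^{+}}))$, and is already implicit in the proof of Lemma~\ref{lem:gammaK}.
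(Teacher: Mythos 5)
Your proof is correct, and its geometric core is the same as the paper's: you write $\gamma_{K,r}=(\Delta r)_r+(n-1)/(\sin_K r)^2$ via the Riccati identity $K+\cot_K^2 r=1/(\sin_K r)^2$, use the Bochner/radial-Riccati formula $(\Delta r)_r=-\|\Hess r\|^2-\Ric(\partial_r,\partial_r)$, bound $\|\Hess r\|^2\leq (n-1)\cot_\kappa^2 r$ by the Hessian Comparison Theorem (noting, as the paper does implicitly, that $\cot_K r\geq 0$ on $(0,R)$ so the eigenvalue bound squares correctly), and bound $\Ric(\partial_r,\partial_r)\leq (n-1)K$, arriving at $\gamma_{K,r}\geq (n-1)(\cot_K^2 r-\cot_\kappa^2 r)$. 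Where you genuinely diverge is the final scalar inequality $\cot_\kappa^2 r-\cot_K^2 r\leq K-\kappa$: the paper obtains it from the derivative bounds in parts (iii) and (iv) of Lemma~\ref{lem:xcot2} (the estimates $-1\leq (x\cot^2\sqrt{x})'\leq 0$ and $0\leq (x\coth^2\sqrt{x})'\leq 1$), whereas you apply the Riccati identity once more for each of $K$ and $\kappa$ and reduce the claim to the Sturm comparison $\sin_K r\leq\sin_\kappa r$, which makes the leftover term $1/(\sin_\kappa r)^2-1/(\sin_K r)^2$ manifestly nonpositive. Your ending is arguably cleaner and avoids the calculus estimates of Lemma~\ref{lem:xcot2} (iii)--(iv) entirely, at the cost of invoking Sturm comparison; the paper's route keeps all the elementary inequalities collected in one lemma that is reused elsewhere (parts (i)--(ii) appear in the proof of part (ii) of the main theorem and of Corollary~\ref{cor:integrated-convexity}). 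Both arguments are valid precisely on $(0,R)$ with $R=\min(\inj,\pi/(2\sqrt{K^{+}}))$, as you correctly flag.
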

\begin{proof}
We know~(\cite{petersen06}*{Ch. 9.1})
$$\gamma_{K,r} = (\Delta r)_r+\frac{n-1}{(\sin_K r)^2}
=-\Ric(\partial_r, \partial_r)-\|\Hess(r)\|^2+\frac{n-1}{(\sin_K r)^2}\ .$$
By the Hessian comparison theorem (\cite{schoen-yau94})
\begin{equation}
\label{ineq:hessian}
(\cot_K r)\| X\|^2\leq
\Hess(r)(X, X)\leq (\cot_{\kappa} r)\|X\|^2 \ .
\end{equation}
Hence,
$$|\Hess(r)(X, X)|^2\leq  (\cot_\kappa r)^2\|X\|^4\ .$$  
We can choose an orthonormal basis 
$(\partial_r, e_1, \ldots, e_{n-1})$ in which 
$\Hess(r)$ is diagonalized. Then we see
$$\|\Hess(r)\|^2 =\sum |\Hess(r)(e_i, e_i)|^2 \leq
(n-1)(\cot_{\kappa} r)^2\ .$$
Consequently,
\begin{multline*}
\gamma_{K, r}\geq -(n-1)K-(n-1)(\cot_\kappa r)^2
+\frac{n-1}{(\sin_K r)^2}=(n-1) (\cot_K^2 r-\cot_\kappa^2 r) \\
\geq -(n-1)(K-\kappa)
\end{multline*}
where the last inequality follows from parts~(iii) and~(iv) of Lemma~\ref{lem:xcot2} below.
\end{proof}
\begin{lemma}
\label{lem:xcot2}
\begin{itemize}
\item[\textup{(i)}] $-1/3\leq \left(\sqrt{x}\cot\sqrt{x}\right)'\leq 0$
for all $0\leq x<(\pi/2)^2$.
\item[\textup{(ii)}] $0\leq \left(\sqrt{x}\coth\sqrt{x}\right)'\leq 1/3$
for all $x\geq 0$.
\item[\textup{(iii)}]  $-1\leq \left(x\cot^2\sqrt{x}\right)'\leq 0$ for all $0\leq x<(\pi/2)^2$ .
\item[\textup{(iv)}]  $0\leq \left(x\coth^2\sqrt{x}\right)'\leq 1$ for all $x\geq 0$
\end{itemize}
\begin{proof}
We prove the right inequality in~(ii):
Since $y\coth y\geq 1$, we have
$(3y+2y\sinh^2 y)'\geq 3(\cosh y\sinh y)'$.
Integrating, we conclude that
$$3y+2y\sinh^2 y\geq 3\cosh y\sinh y\ .$$ Equivalently,
$(y\coth y)'\leq 2y/3$. Hence, $(\sqrt{x}\coth\sqrt{x})'\leq 1/3$.

We prove the left inequality in~(iii):
\begin{equation}
\label{eqn:xcot2'}
(x\cot^2\sqrt{x})'= \cot^2\sqrt{x} -\frac{\sqrt{x}\cot\sqrt{x}}{\sin^2\sqrt{x}}\ .
\end{equation}
Observe that for $0\leq y<\pi/2$
\begin{equation}
\label{eqn:ycoty}
y\cot y \leq 1 \ .
\end{equation}
From~(\ref{eqn:xcot2'}) and~(\ref{eqn:ycoty}) it follows that 
$$(x\cot^2\sqrt{x})'\geq \cot^2\sqrt{x} - \frac{1}{\sin^2{\sqrt{x}}} = -1\ .$$
The proofs of the all other inequalities in the Lemma are omitted.
\end{proof}
\end{lemma}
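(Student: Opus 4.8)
The plan is to pass everywhere to the variable $u=\sqrt{x}$, so that $\frac{d}{dx}=\frac{1}{2u}\frac{d}{du}$ and the endpoint $u=0$ is recovered by continuity (each of the four functions is analytic at $u=0$; e.g.\ $u\cot u=1-u^{2}/3-\cdots$), and then to reduce each of the four double inequalities to an elementary estimate for the circular or hyperbolic functions by clearing a manifestly positive denominator. Differentiating once gives
\[
\bigl(\sqrt{x}\cot\sqrt{x}\bigr)'=\frac{(u\cot u)'}{2u}=\frac{\tfrac{1}{2}\sin 2u-u}{2u\sin^{2}u},\qquad
\bigl(\sqrt{x}\coth\sqrt{x}\bigr)'=\frac{(u\coth u)'}{2u}=\frac{\tfrac{1}{2}\sinh 2u-u}{2u\sinh^{2}u},
\]
while $\bigl(x\cot^{2}\sqrt{x}\bigr)'=\cot^{2}u-\frac{u\cot u}{\sin^{2}u}$ is the identity~\eqref{eqn:xcot2'} and $\bigl(x\coth^{2}\sqrt{x}\bigr)'=\coth^{2}u-\frac{u\coth u}{\sinh^{2}u}$ is its hyperbolic analogue.

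Most of the inequalities are then one-line monotonicity statements. For $0<u<\pi/2$, clearing the positive factor $2u\sin^{2}u$ turns the upper bound in~(i) into $\sin 2u\le 2u$; clearing $2u\sinh^{2}u$ turns the lower bound in~(ii) into $\sinh 2u\ge 2u$. In~(iii), multiplying $\cot^{2}u-\frac{u\cot u}{\sin^{2}u}$ by $\sin^{2}u/\cot u>0$ turns the upper bound again into $\cos u\sin u\le u$, while the lower bound follows from $u\cot u\le 1$ as in the text. In~(iv), the lower bound reduces to $\cosh u\sinh u\ge u$, and, using $\coth^{2}u-1=1/\sinh^{2}u$, the upper bound reduces to $u\coth u\ge 1$, i.e.\ $\tanh u\le u$. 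Each of $\sin 2u\le 2u$, $\sinh 2u\ge 2u$, $u\cot u\le 1$, $u\coth u\ge 1$ follows by checking that the relevant difference vanishes at $u=0$ and has a derivative of the right sign, so these are immediate. The remaining upper bound, in~(ii), is the one already carried out in the text: from $u\coth u\ge 1$ one gets $(3u+2u\sinh^{2}u)'\ge 3(\cosh u\sinh u)'$; integrating from $0$ gives $(u\coth u)'\le 2u/3$; dividing by $2u$ gives $(\sqrt{x}\coth\sqrt{x})'\le 1/3$.

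What remains is the lower bound in part~(i), which I expect to be the only genuinely delicate step. Here one has to bound $\tfrac{1}{2}\sin 2u-u$ from below by a negative multiple of $u\sin^{2}u$, i.e.\ prove an inequality of the shape $\tfrac{1}{2}\sin 2u-u+c\,u\sin^{2}u\ge 0$ on $[0,\pi/2)$ for the appropriate positive constant $c$. Since the left-hand side and several of its derivatives vanish at $u=0$, the plan --- exactly as for the left inequality in~(iii) --- is to differentiate it sufficiently many times to reach an expression whose sign is manifest on the whole interval (something of the form ``a nonnegative multiple of $u\sin u$''), and then integrate back up. The only point requiring care is the behaviour near $u=\pi/2$, where $\sin^{2}u$ stays bounded away from $0$ so that no degeneracy occurs and one may safely argue on a closed sub-interval away from the origin. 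The rest of the lemma is bookkeeping around the two displayed derivative identities.
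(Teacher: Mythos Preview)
Your approach matches the paper's on the two inequalities it actually proves (the upper bound in~(ii) and the lower bound in~(iii)), and your reductions for the remaining ``easy'' inequalities --- the upper bound in~(i), the lower bound in~(ii), the upper bound in~(iii), and both bounds in~(iv) --- are correct and more explicit than the paper, which simply omits them.

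The genuine gap is the lower bound in~(i), which you leave as a sketch. In fact this inequality is \emph{false as stated}, so your plan cannot be carried out. From the Taylor expansion $\sqrt{x}\cot\sqrt{x}=1-\frac{x}{3}-\frac{x^{2}}{45}-\cdots$ one gets $(\sqrt{x}\cot\sqrt{x})'=-\frac{1}{3}-\frac{2x}{45}-\cdots$, which lies strictly \emph{below} $-1/3$ for every $x>0$, not above it. Concretely, at $x=1$ one has $(\sqrt{x}\cot\sqrt{x})'=(\tfrac12\sin 2-1)/(2\sin^{2}1)\approx -0.385<-\tfrac13$, and as $x\to(\pi/2)^{2}$ the derivative tends to $-\tfrac12$. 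Equivalently, the function $\psi(u)=\tfrac12\sin 2u-u+\tfrac{2}{3}u\sin^{2}u$ that your scheme would need to show is nonnegative satisfies $\psi'''(u)=-8u\sin 2u\le 0$ with $\psi(0)=\psi'(0)=\psi''(0)=0$, hence $\psi(u)\le 0$ on $(0,\pi/2)$ with strict inequality for $u>0$. The correct statement on $[0,(\pi/2)^{2})$ is $-\tfrac12<(\sqrt{x}\cot\sqrt{x})'\le -\tfrac13$, with $-\tfrac13$ attained only at $x=0$. The paper omits this proof as well; you should flag the discrepancy rather than try to push your differentiate-and-integrate argument through.
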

\subsection{Choice of coordinates and notations}
We take geodesic polar coordinates centred at $p\in N$.
Fix any $K\in\Rb$. The metric can be written as
$$g=\dif r^2+(\sin_K r)^2 (a_K)_{ij}\dif\theta^i\dif\theta^j\ ,$$
where $\theta^i$ are coordinates on the standard unit sphere 
$S^{n-1}\subset\Rb^n$.

We denote the determinant of the matrix $(a_K)_{ij}$ by $a_K$.
The Laplacian on $N$ can be written as
$$(\Delta f)(r, \theta) = f_{rr}(r,\theta)+((n-1)\cot_K r+\gamma_K) f_r(r,\theta) + \frac{1}{(\sin_K r)^2}\left(\Delta_S f(r,\cdot)\right)(\theta)\ ,$$
where $\Delta_S$ is the following operator acting on functions $g$
defined on $S^{n-1}$:

$$(\Delta_S g)(\theta):= \frac{1}{\sqrt{a_K}}\frac{\partial}{\partial\theta^i}
\left(a_K^{ij}\sqrt{a_K}\frac{\partial g}{\partial\theta^j}\right)\ .$$

We emphasize that the definition of $\Delta_S$ depends on our choice of $K$. 
With these definitions we also have
$$\gamma_K=\frac{(\sqrt{a_K})_r}{\sqrt{a_K}}\ .$$

\subsection{Proof of part~(i)}
We observe that
$$q(r)=\int u^2 (\sin_K r)^{n-1} \sqrt{a_K}\dif\theta\ ,$$
where the integration is understood to be performed over the parameter space $[0,\pi]^{n-2}\times[0,2\pi]$
for $S^{n-1}$ in $\Rb^{n-1}$.
A straightforward computation shows
\begin{lemma}
\label{lem:q'}
\begin{multline*}
q'(r)=\int 2uu_r(\sin_K  r)^{n-1}\sqrt{a_K}\dif\theta +
\int u^2 \gamma_K(\sin_K r)^{n-1} \sqrt{a_K}\dif\theta\\
+(n-1) (\cot_K r) \int u^2 (\sin_K r)^{n-1} \sqrt{a_K}\dif\theta \ .
\end{multline*}
\end{lemma}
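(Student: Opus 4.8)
The plan is to differentiate the integral expression
$q(r)=\int u^2 (\sin_K r)^{n-1} \sqrt{a_K}\,\dif\theta$
under the integral sign, treating $r$ as the single variable and $\theta$ as a parameter. Since everything is smooth for $r$ in a small geodesic ball and the parameter domain $[0,\pi]^{n-2}\times[0,2\pi]$ is compact, differentiation under the integral sign is justified. This reduces the task to applying the product rule to the three $r$-dependent factors: $u^2$, $(\sin_K r)^{n-1}$, and $\sqrt{a_K}$.

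First I would handle each factor in turn. Differentiating $u^2$ gives $2uu_r$, which produces the first term on the right-hand side. Differentiating $(\sin_K r)^{n-1}$ gives $(n-1)(\sin_K r)^{n-2}(\sin_K r)' = (n-1)(\cot_K r)(\sin_K r)^{n-1}$, using the definition $\cot_K r = (\sin_K r)'/(\sin_K r)$; pulling the factor $(\sin_K r)^{n-1}$ back into the integrand and the scalar $(n-1)\cot_K r$ out front yields the third term. Differentiating $\sqrt{a_K}$ gives $(\sqrt{a_K})_r = \gamma_K\sqrt{a_K}$, by the identity $\gamma_K = (\sqrt{a_K})_r/\sqrt{a_K}$ recorded at the end of the "Choice of coordinates" subsection; this produces the middle term. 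Summing the three contributions gives exactly the claimed formula.

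There is essentially no obstacle here — this is the "straightforward computation" the text advertises — but the one point to be careful about is consistency of the splitting of $\Delta r$ into $(n-1)\cot_K r$ and the correction $\gamma_K$, i.e. that the same $K$ used in writing the metric $g=\dif r^2+(\sin_K r)^2(a_K)_{ij}\dif\theta^i\dif\theta^j$ is the one appearing in $\cot_K$ and in the definition of $\gamma_K$. Once that bookkeeping is fixed, the three terms assemble with no cancellation and the lemma follows. $\qed$
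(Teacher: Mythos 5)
Your proposal is correct and is exactly the computation the paper has in mind: the text proves Lemma~\ref{lem:q'} by declaring it ``a straightforward computation,'' namely differentiation under the integral sign with the product rule applied to the three factors $u^2$, $(\sin_K r)^{n-1}$ and $\sqrt{a_K}$, using $(\sin_K r)'=(\cot_K r)\sin_K r$ and $(\sqrt{a_K})_r=\gamma_K\sqrt{a_K}$, just as you do. Nothing further is needed.
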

\begin{lemma}
\label{lem:uur>0}
$$\int 2uu_r (\sin_K r)^{n-1}\,\sqrt{a_K}\dif\theta \geq 0\ .$$
\end{lemma}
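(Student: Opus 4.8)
\textbf{Proof plan for Lemma~\ref{lem:uur>0}.}

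The plan is to integrate by parts on the geodesic sphere, using that $u$ is harmonic to rewrite the radial derivative term. First I would recall that in the chosen $K$-adapted polar coordinates the harmonicity equation $\Delta u = 0$ reads
$$u_{rr} + \bigl((n-1)\cot_K r + \gamma_K\bigr)u_r + \frac{1}{(\sin_K r)^2}\Delta_S u = 0\ .$$
Multiplying by $u$ and integrating against the measure $(\sin_K r)^{n-1}\sqrt{a_K}\dif\theta$ over the parameter space for $S^{n-1}$ will produce the quantity we want to control together with an $L^2$-type term in $u_r$ and a term involving $\Delta_S u$. The key observation is that $\int u (\Delta_S u)\sqrt{a_K}\dif\theta = -\int a_K^{ij}\partial_i u\,\partial_j u\,\sqrt{a_K}\dif\theta \leq 0$, since $\Delta_S$ is (minus) a nonnegative operator in the metric $(a_K)_{ij}$ on the closed manifold $S^{n-1}$ and there is no boundary.

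The cleaner route, which I expect to use, is to differentiate $q$ a second time or rather to consider the flux identity directly: let $f(r) := \int 2 u u_r (\sin_K r)^{n-1}\sqrt{a_K}\dif\theta$ be the quantity in question. By the divergence theorem on the geodesic ball $B(p,r)$, this flux equals $2\int_{B(p,r)} |\nabla u|^2 \dif\Vol \geq 0$, because $\int_{B(p,r)} u\,\Delta u = 0$ and $u_r$ is the outward normal derivative on $S(r)$ while $(\sin_K r)^{n-1}\sqrt{a_K}\dif\theta = dA_r$ is exactly the induced area form on the geodesic sphere. This is the most transparent argument and avoids any curvature estimate at all. I would state it in that form: apply Green's identity $\int_{B(p,r)}(u\Delta u + |\nabla u|^2)\dif\Vol = \int_{S(r)} u\,\partial_r u\,dA_r$, note the left side reduces to $\int_{B(p,r)}|\nabla u|^2\dif\Vol$ by harmonicity, and conclude nonnegativity.

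There is essentially no obstacle here; the only point requiring a line of care is the identification of the boundary measure: one must check that $(\sin_K r)^{n-1}\sqrt{a_K}\dif\theta$ is genuinely the Riemannian area form $dA_r$ on $S(r)$ for the metric $g = \dif r^2 + (\sin_K r)^2 (a_K)_{ij}\dif\theta^i\dif\theta^j$, which is immediate since the first fundamental form of $S(r)$ is $(\sin_K r)^2(a_K)_{ij}\dif\theta^i\dif\theta^j$ and hence has volume density $(\sin_K r)^{n-1}\sqrt{a_K}$, and that $\partial_r$ is the outward unit normal, which holds because $|\partial_r|_g = 1$ in geodesic polar coordinates. With these remarks the Green's identity computation is a one-liner and the lemma follows.
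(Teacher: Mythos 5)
Your proposal is correct and is essentially the paper's own argument: the paper also identifies $(\sin_K r)^{n-1}\sqrt{a_K}\dif\theta$ with the area form $dA_r$ and applies Green's formula on $B(p,r)$ (written there via $\Delta(u^2)=2|\nabla u|^2$ for harmonic $u$) to conclude the flux equals $2\int_{B(p,r)}|\nabla u|^2\dif\Vol\geq 0$. Your first paragraph's spherical integration-by-parts route is unnecessary, as you yourself note.
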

\begin{proof}
By Green's formula and the harmonicity of $u$
\begin{multline*}
\int 2uu_r (\sin_K r)^{n-1}\sqrt{a_K}\,\dif\theta
=\int_{\partial B(p, r)} \frac{\partial (u^2)}{\partial\hat{n}}\dif A_r \\
=\int_{B(p, r)} \Delta(u^2)\,\dif\Vol= \int_{B(p, r)} 2|\nabla u|^2\, \dif\Vol \ .
\end{multline*}
\end{proof}
\begin{proof}[Proof of Theorem~\ref{thm:perturbed-convexity}, part~(i)]
Part~(i) of the theorem follows directly from
Lemma~\ref{lem:q'}, Lemma~\ref{lem:uur>0} and Lemma~\ref{lem:gammaK}.
\end{proof}
\subsection{Proof of part~(ii)}
Let $w=(\sin_K r)^l u$, where $l=(n-2)/2$.
$w$ satisfies the equation
\begin{equation}
\label{eqn:w-eqn}
w_{rr}+(\cot_K r+\gamma_K) w_r +l(l+1)Kw -\frac{l^2w}{(\sin_K r)^2}
+\frac{\Delta_S w}{(\sin_K r)^2} =0\ .
\end{equation}
Let 
\begin{equation}
Q(r) = \int w(r,\theta)^2\, \sqrt{a_K}\dif\theta=\frac{q(r)}{\sin_K(r)}\ .
\end{equation}
Let us also set 
$$\nabla_S w:= (\sin_K r)\left(\nabla w - w_r\frac{\partial}{\partial
  r}\right)=\frac{1}{\sin_K r} a_K^{ij}
\frac{\partial w}{\partial\theta^i}\frac{\partial}{\partial\theta^j}\ .$$

$\nabla_S$ is defined in this way in order to have Green's formula
\begin{equation}
\label{identity:green}
\int f(\theta)(\Delta_S g)(\theta)\sqrt{a_K}\dif\theta
=-\int \langle\nabla_S f,\nabla_S g\rangle \sqrt{a_K}\dif\theta\ .
\end{equation}
Note also that $\langle\nabla_S w, \partial_r\rangle=0$.

\begin{lemma}
\label{lem:Q'}
\begin{itemize}
\item[\textup{(i)}] $Q'(r) = 
\int 2w(w_r+\gamma_K w/2)\sqrt{a_K}\dif\theta$.
\item[\textup{(ii)}] $Q'(r)\geq (n-2)(\cot_K r)Q(r)\geq 0$.
\end{itemize}
\end{lemma}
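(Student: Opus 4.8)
The plan is routine differentiation for part~(i), and for part~(ii) to rewrite $Q'(r)$ as $(n-2)(\cot_K r)Q(r)$ plus two manifestly nonnegative terms, one of which is exactly the quantity already shown to be nonnegative in Lemma~\ref{lem:uur>0}.

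For part~(i) I would differentiate $Q(r)=\int w(r,\theta)^2\sqrt{a_K}\dif\theta$ under the integral sign and use the identity $\gamma_K=(\sqrt{a_K})_r/\sqrt{a_K}$ recorded in Section~5.2:
$$Q'(r)=\int\Big(2ww_r\sqrt{a_K}+w^2(\sqrt{a_K})_r\Big)\dif\theta=\int 2w\big(w_r+\tfrac12\gamma_K w\big)\sqrt{a_K}\dif\theta,$$
which is the asserted formula (the $\sqrt{a_K}$-weighted analogue of the computation behind Lemma~\ref{lem:q'}).

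For part~(ii) the key computation is $w_r=l(\cot_K r)w+(\sin_K r)^l u_r$, which follows from $w=(\sin_K r)^l u$ together with $(\sin_K r)'=(\cot_K r)(\sin_K r)$. Substituting this into the formula of part~(i), and using $w(\sin_K r)^l=(\sin_K r)^{n-2}u$ and $2l=n-2$, one obtains
$$Q'(r)=(n-2)(\cot_K r)Q(r)+\frac{1}{\sin_K r}\int 2uu_r(\sin_K r)^{n-1}\sqrt{a_K}\dif\theta+\int\gamma_K w^2\sqrt{a_K}\dif\theta.$$
Since $\sin_K r>0$ for $r<R$, the middle term is nonnegative by Lemma~\ref{lem:uur>0}, and the last term is nonnegative by Lemma~\ref{lem:gammaK}; hence $Q'(r)\geq(n-2)(\cot_K r)Q(r)$. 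Finally $\cot_K r>0$ throughout $(0,R)$ (for $K>0$ this amounts to $r\sqrt K<\pi/2$, and for $K\le 0$ it is immediate from the definition of $\cot_K$), and $Q(r)\geq 0$, so the right-hand side is $\geq 0$.

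There is no genuine difficulty here; the only point requiring care is the bookkeeping in the substitution $w_r=l(\cot_K r)w+(\sin_K r)^l u_r$. This is exactly the reason for introducing $w=(\sin_K r)^l u$: it absorbs most of the Euclidean-type radial drift, reducing the $(n-1)(\cot_K r)$ term down to the single term $(n-2)(\cot_K r)Q(r)$, so that the monotonicity of $Q$ follows from $\gamma_K\geq 0$ together with the already-established inequality $\int 2uu_r(\sin_K r)^{n-1}\sqrt{a_K}\dif\theta\geq 0$.
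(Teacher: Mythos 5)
Your proof is correct and is in substance the paper's argument: part (i) is the same direct differentiation, and for part (ii) the paper simply notes that the inequality is a reformulation of Theorem~\ref{thm:perturbed-convexity}(i) (i.e.\ $q'\geq(n-1)(\cot_K r)\,q$ divided through by $\sin_K r$, using $Q=q/\sin_K r$), whose proof rests on exactly the two ingredients you invoke, namely Lemma~\ref{lem:uur>0} and $\gamma_K\geq 0$ from Lemma~\ref{lem:gammaK}. Your substitution $w_r=l(\cot_K r)w+(\sin_K r)^l u_r$ merely re-derives that reformulation directly at the level of $Q$, so the two arguments are essentially identical.
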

\begin{proof}
Part~(i) is a direct calculation.
Part~(ii) is just another formulation of part~(i) of 
Theorem~\ref{thm:perturbed-convexity}.
\end{proof}
A second direct calculation using equation~(\ref{eqn:w-eqn}) 
and formula~(\ref{identity:green}) gives
\begin{lemma}
\label{lem:Q''+Q'}
\begin{multline*}
Q''(r)+(\cot_K r) Q'(r) =
2\int \left(w_r+\frac{\gamma_K}{2}w\right)^2\sqrt{a_K}\dif\theta \\
+\frac{2}{(\sin_K r)^2}\int |\nabla_S w|^2\dif\theta 
+\frac{2l^2}{(\sin_K r)^2}\int w^2\sqrt{a_K}\dif\theta\\
-2l(l+1)K\int w^2\sqrt{a_K}\dif\theta 
+ \int w^2\left(\gamma_{K, r}+\gamma_K\cot_K r+\frac{\gamma_K^2}{2}\right)\sqrt{a_K}\dif\theta\ .
\end{multline*}
\end{lemma}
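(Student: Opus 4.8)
The plan is a pure two-step differentiation of $Q$ under the integral sign, followed by substitution of the radial equation~(\ref{eqn:w-eqn}) and one integration by parts on $S^{n-1}$. The ingredients are: the identity $(\sqrt{a_K})_r=\gamma_K\sqrt{a_K}$ recorded at the end of the coordinate discussion; Green's formula~(\ref{identity:green}); the orthogonality $\langle\nabla_S w,\partial_r\rangle=0$; and the first-derivative formula $Q'(r)=\int(2ww_r+\gamma_K w^2)\sqrt{a_K}\dif\theta$ from Lemma~\ref{lem:Q'}(i). Differentiating $Q'$ once more and using $(\sqrt{a_K})_r=\gamma_K\sqrt{a_K}$ gives
$$Q''(r)=\int\bigl(2w_r^2+2ww_{rr}+4\gamma_K ww_r+\gamma_{K,r}w^2+\gamma_K^2w^2\bigr)\sqrt{a_K}\dif\theta\ .$$

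Next I would eliminate $w_{rr}$ via~(\ref{eqn:w-eqn}): the term $2ww_{rr}$ becomes $-2(\cot_K r+\gamma_K)ww_r-2l(l+1)Kw^2+\frac{2l^2}{(\sin_K r)^2}w^2-\frac{2}{(\sin_K r)^2}w\Delta_S w$, and by~(\ref{identity:green}) the last summand integrates to $+\frac{2}{(\sin_K r)^2}\int\langle\nabla_S w,\nabla_S w\rangle\sqrt{a_K}\dif\theta$, with no $w_r$ cross term since $\langle\nabla_S w,\partial_r\rangle=0$. Collecting the $ww_r$ contributions leaves a net $2\gamma_K ww_r-2(\cot_K r)ww_r$ inside the integral. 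Adding $(\cot_K r)Q'(r)=\int\bigl(2(\cot_K r)ww_r+(\cot_K r)\gamma_K w^2\bigr)\sqrt{a_K}\dif\theta$ cancels the $\pm2(\cot_K r)ww_r$, and completing the square $2w_r^2+2\gamma_K ww_r=2\bigl(w_r+\frac{\gamma_K}{2}w\bigr)^2-\frac{\gamma_K^2}{2}w^2$ turns the surviving $w^2$-coefficient $\gamma_{K,r}+\gamma_K^2+\gamma_K\cot_K r-\frac{\gamma_K^2}{2}$ into exactly $\gamma_{K,r}+\gamma_K\cot_K r+\frac{\gamma_K^2}{2}$. The remaining terms — $2\int(w_r+\frac{\gamma_K}{2}w)^2\sqrt{a_K}\dif\theta$, the $|\nabla_S w|^2$ term, the $\frac{2l^2}{(\sin_K r)^2}w^2$ term, and $-2l(l+1)K\int w^2\sqrt{a_K}\dif\theta$ — then match the statement verbatim.

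I do not expect a genuine obstacle; the content is bookkeeping. The places needing care are: justifying differentiation under the integral (legitimate since $w$ is smooth on the punctured geodesic ball and, on any compact subinterval of $(0,R)$, $r$ is bounded away from $0$); tracking the several $\gamma_K$-terms through the substitution so that the completion of the square lands on the advertised coefficient; and — the one computation worth verifying independently — the derivation of~(\ref{eqn:w-eqn}) from $\Delta u=0$ under $w=(\sin_K r)^{(n-2)/2}u$, since that substitution is precisely what converts the weight $(n-1)\cot_K r$ into $\cot_K r$ and manufactures the potential terms $l(l+1)K$ and $-l^2/(\sin_K r)^2$.
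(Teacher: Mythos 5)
Your proposal is correct and follows essentially the same route as the paper: the paper's proof is exactly the "direct calculation" you describe — differentiate $Q'$ using $(\sqrt{a_K})_r=\gamma_K\sqrt{a_K}$, eliminate $w_{rr}$ via equation~(\ref{eqn:w-eqn}), convert $-\int w\Delta_S w\,\sqrt{a_K}\dif\theta$ with Green's formula~(\ref{identity:green}), and complete the square to land on the coefficient $\gamma_{K,r}+\gamma_K\cot_K r+\tfrac{\gamma_K^2}{2}$. Your bookkeeping checks out (the missing $\sqrt{a_K}$ in the $|\nabla_S w|^2$ term of the statement is a typo in the paper, and your version is the right one), so nothing further is needed.
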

\begin{lemma}
\label{lem:Q''+Q'+Q}
\begin{multline*}
Q''(r)+(\cot_K r) Q'(r) \geq
2\int \left(w_r+\frac{\gamma_K}{2}w\right)^2\sqrt{a_K}\dif\theta \\
+\frac{2}{(\sin_K r)^2}\int |\nabla_S w|^2\sqrt{a_K}\dif\theta 
+\frac{2l^2}{(\sin_K r)^2}Q
-2l(l+1)KQ 
-(n-1)(K-\kappa)Q\ .
\end{multline*}
\end{lemma}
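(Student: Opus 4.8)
The plan is to start from the exact identity in Lemma~\ref{lem:Q''+Q'} and estimate the last term, the one involving the coefficient $\gamma_{K,r}+\gamma_K\cot_K r+\gamma_K^2/2$, from below by $-(n-1)(K-\kappa)$. Indeed, once we know
$$\gamma_{K,r}+\gamma_K\cot_K r+\frac{\gamma_K^2}{2}\geq -(n-1)(K-\kappa)$$
pointwise (for $r<R$, so that $\cot_K r\geq 0$), we may multiply by $w^2\sqrt{a_K}$, integrate over $\theta$, use $\int w^2\sqrt{a_K}\dif\theta=Q$, and substitute back into Lemma~\ref{lem:Q''+Q'}; this directly yields the claimed inequality, with the $|\nabla_S w|^2$ term rewritten as $\int|\nabla_S w|^2\sqrt{a_K}\dif\theta$ (the $\sqrt{a_K}$ is absorbed in the definition of $\nabla_S$, matching the convention in~(\ref{identity:green})).

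So the whole content is the pointwise bound on $\gamma_{K,r}+\gamma_K\cot_K r+\gamma_K^2/2$. Here I would invoke the three geometric lemmas already proved: Lemma~\ref{lem:gammaK} gives $0\leq\gamma_K\leq(n-1)(\cot_\kappa r-\cot_K r)$, and Lemma~\ref{lem:gammaK'} gives $\gamma_{K,r}\geq-(n-1)(K-\kappa)$. Since $\gamma_K\geq0$, $\cot_K r\geq0$ for $r<R$, and $\gamma_K^2/2\geq0$, the two nonnegative terms $\gamma_K\cot_K r$ and $\gamma_K^2/2$ can simply be dropped, leaving
$$\gamma_{K,r}+\gamma_K\cot_K r+\frac{\gamma_K^2}{2}\geq\gamma_{K,r}\geq-(n-1)(K-\kappa)\ ,$$
which is exactly what is needed. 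Thus the proof is a one-line assembly: discard the manifestly nonnegative terms, then apply Lemma~\ref{lem:gammaK'}.

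I expect there is essentially no obstacle here — the lemma is a bookkeeping step that repackages Lemma~\ref{lem:Q''+Q'} into a form ready for the final convexity estimate, and all the real work (the Hessian comparison arguments and the elementary inequalities of Lemma~\ref{lem:xcot2}) has already been done in Lemmas~\ref{lem:gammaK}, \ref{lem:gammaK'}, and~\ref{lem:xcot2}. The only point requiring a moment's care is the sign of $\cot_K r$: one must note that for $r<R=\min(\inj,\pi/(2\sqrt{K^+}))$ we have $\sin_K r>0$ and $(\sin_K r)'>0$, so $\cot_K r\geq 0$, which legitimizes dropping the cross term $\gamma_K\cot_K r$. With that observation recorded, the proof of Lemma~\ref{lem:Q''+Q'+Q} is immediate.

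\begin{proof}[Proof of Lemma~\ref{lem:Q''+Q'+Q}]
We start from the identity in Lemma~\ref{lem:Q''+Q'}. By Lemma~\ref{lem:gammaK} we have $\gamma_K\geq 0$, and for $r<R$ we have $\sin_K r>0$ and $\cot_K r\geq 0$. Hence the two terms $\gamma_K\cot_K r$ and $\gamma_K^2/2$ are nonnegative, so
$$\gamma_{K,r}+\gamma_K\cot_K r+\frac{\gamma_K^2}{2}\geq \gamma_{K,r}\geq -(n-1)(K-\kappa)\ ,$$
where the last inequality is Lemma~\ref{lem:gammaK'}. Multiplying by $w^2\sqrt{a_K}$, integrating over $\theta$, and using $\int w^2\sqrt{a_K}\dif\theta=Q$, we obtain
$$\int w^2\left(\gamma_{K, r}+\gamma_K\cot_K r+\frac{\gamma_K^2}{2}\right)\sqrt{a_K}\dif\theta\geq -(n-1)(K-\kappa)Q\ .$$
Substituting this lower bound into the identity of Lemma~\ref{lem:Q''+Q'} yields the claimed inequality.
\end{proof}
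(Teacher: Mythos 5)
Your proof is correct and matches the paper's own argument: the paper likewise deduces the lemma from the identity in Lemma~\ref{lem:Q''+Q'} together with the bounds on $\gamma_K$ and $\gamma_{K,r}$ from Lemmas~\ref{lem:gammaK} and~\ref{lem:gammaK'}, dropping the nonnegative terms $\gamma_K\cot_K r$ and $\gamma_K^2/2$. Your explicit remark that $\cot_K r\geq 0$ for $r<R$ is a correct (and welcome) justification of a step the paper leaves implicit.
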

\begin{proof}
This estimate is due to Lemma~\ref{lem:Q''+Q'} and
the estimates on $\gamma_K$ and $\gamma_{K, r}$ in 
Lemma~\ref{lem:gammaK} and Lemma~\ref{lem:gammaK'} respectively.
\end{proof}
Immediately we get
\begin{lemma}
\label{lem:logq''}
\begin{multline*}
Q''(r)+(\cot_K r)Q'(r)-\frac{Q'(r)^2}{Q(r)}
\geq 2\int \left(w_r+\frac{\gamma_K}{2}w\right)^2\sqrt{a_K}\dif\theta \\
+\frac{2}{(\sin_K r)^2}\int |\nabla_S w|^2\sqrt{a_K}\dif\theta
+\frac{2l^2}{(\sin_K r)^2}Q
-2l(l+1)KQ - (n-1)(K-\kappa)Q \\
-\frac{4\left(\int w(w_r+\gamma_K w/2)\,\sqrt{a_K}\dif\theta\right)^2}
{\int w^2\,\sqrt{a_K}\,\dif\theta}
\end{multline*}
\end{lemma}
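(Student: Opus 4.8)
The plan is to deduce Lemma~\ref{lem:logq''} immediately from the two preceding lemmas; all the substantive work has already been carried out. Recall that the identity for $Q''(r)+(\cot_K r)Q'(r)$ in Lemma~\ref{lem:Q''+Q'} came from equation~(\ref{eqn:w-eqn}) together with Green's formula~(\ref{identity:green}), and that Lemma~\ref{lem:Q''+Q'+Q} then replaced the terms involving $\gamma_K$ and $\gamma_{K,r}$ by the curvature bounds of Lemmas~\ref{lem:gammaK} and~\ref{lem:gammaK'}. What is left is purely algebraic.

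Concretely, I would start from the lower bound for $Q''(r)+(\cot_K r)Q'(r)$ furnished by Lemma~\ref{lem:Q''+Q'+Q} and subtract $Q'(r)^2/Q(r)$ from both sides. On the right-hand side this introduces the single new term $-Q'(r)^2/Q(r)$, which I then rewrite using the two identities already available: the definition $Q(r)=\int w^2\,\sqrt{a_K}\dif\theta$, and part~(i) of Lemma~\ref{lem:Q'}, namely $Q'(r)=2\int w(w_r+\gamma_K w/2)\,\sqrt{a_K}\dif\theta$. Squaring the latter and dividing by $Q(r)$ yields precisely the last term displayed in the statement of Lemma~\ref{lem:logq''}, and collecting the remaining terms gives the asserted inequality.

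There is essentially no obstacle here: the lemma is recorded on its own only because its right-hand side is the convenient starting point for the next stage, in which one shows by Cauchy--Schwarz that the sum $2\int\left(w_r+\frac{\gamma_K}{2}w\right)^2\sqrt{a_K}\dif\theta+\frac{2}{(\sin_K r)^2}\int|\nabla_S w|^2\sqrt{a_K}\dif\theta$ dominates $Q'(r)^2/Q(r)$ up to curvature-controlled errors, thereby producing the perturbed log-convexity of $Q$, and hence of $q$. The only care needed is to make sure that no term is dropped or has its sign flipped in passing from $Q''(r)+(\cot_K r)Q'(r)$ to $Q''(r)+(\cot_K r)Q'(r)-Q'(r)^2/Q(r)$.
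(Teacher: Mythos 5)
Your proposal is correct and coincides with the paper's own argument: the paper indeed obtains Lemma~\ref{lem:logq''} ``immediately'' from Lemma~\ref{lem:Q''+Q'+Q} by subtracting $Q'(r)^2/Q(r)$ and rewriting it via $Q(r)=\int w^2\sqrt{a_K}\dif\theta$ and part~(i) of Lemma~\ref{lem:Q'}, exactly as you describe. Nothing further is needed.
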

%
\begin{lemma}
\label{lem:halfCS}
\begin{multline*}
Q''(r)+(\cot_K r)Q'(r)-\frac{Q'(r)^2}{Q(r)}+(n-1)(\cot_\kappa r-\cot_K r)Q'(r) \\
\geq \frac{\vphi(r)}{(\sin_K r)^2}
+\frac{2l^2}{(\sin_K r)^2}Q-2l(l+1)KQ - (n-1)(K-\kappa)Q 
\end{multline*}
where
\begin{equation*}
\vphi(r)=
-2(\sin_K r)^2\int w_r^2\,\sqrt{a_K}\dif\theta
+2\int |\nabla_S w|^2\,\sqrt{a_K}\dif\theta\ .
\end{equation*}
\end{lemma}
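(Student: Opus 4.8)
The plan is to deduce Lemma~\ref{lem:halfCS} from Lemma~\ref{lem:logq''} by taming the one term, $(Q')^2/Q$, that carries an unfavourable sign. First observe that, since $Q'=\int 2w(w_r+\gamma_K w/2)\sqrt{a_K}\dif\theta$ by Lemma~\ref{lem:Q'}(i), the last term in Lemma~\ref{lem:logq''} is exactly $(Q')^2/Q$. Adding $(n-1)(\cot_\kappa r-\cot_K r)Q'$ to both sides of Lemma~\ref{lem:logq''} turns its left-hand side into the left-hand side of Lemma~\ref{lem:halfCS}; comparing the right-hand sides and using $\vphi(r)/(\sin_K r)^2=-2\int w_r^2\sqrt{a_K}\dif\theta+\frac{2}{(\sin_K r)^2}\int|\nabla_S w|^2\sqrt{a_K}\dif\theta$, one sees that everything reduces to the single auxiliary inequality
$$2\int\Big(w_r+\tfrac{\gamma_K}{2}w\Big)^2\sqrt{a_K}\dif\theta+2\int w_r^2\sqrt{a_K}\dif\theta+(n-1)(\cot_\kappa r-\cot_K r)Q'\ \geq\ \frac{(Q')^2}{Q}\ .$$

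The crux --- the ``half Cauchy--Schwarz'' alluded to by the name of the lemma --- is to spend the extra term $2\int w_r^2\sqrt{a_K}\dif\theta$ by merging it with $2\int(w_r+\gamma_K w/2)^2\sqrt{a_K}\dif\theta$ via the pointwise identity
$$2\Big(w_r+\tfrac{\gamma_K}{2}w\Big)^2+2w_r^2=\Big(2w_r+\tfrac{\gamma_K}{2}w\Big)^2+\tfrac{\gamma_K^2}{4}w^2\ .$$
Then I would set $D:=\int\gamma_K w^2\sqrt{a_K}\dif\theta$, note the identity $\int w\big(2w_r+\tfrac{\gamma_K}{2}w\big)\sqrt{a_K}\dif\theta=Q'-\tfrac12 D$, and apply the Cauchy--Schwarz inequality to obtain
$$\int\Big(2w_r+\tfrac{\gamma_K}{2}w\Big)^2\sqrt{a_K}\dif\theta\ \geq\ \frac{(Q'-\tfrac12 D)^2}{Q}=\frac{(Q')^2}{Q}-\frac{DQ'}{Q}+\frac{D^2}{4Q}\ .$$
The point is that projecting the single square $(2w_r+\tfrac{\gamma_K}{2}w)^2$ onto $w$ produces $(Q'-\tfrac12 D)^2/Q$, i.e.\ only ``half'' of $(Q')^2/Q$ plus correction terms, which is exactly what is available.

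Plugging this in, the auxiliary inequality reduces to
$$(n-1)(\cot_\kappa r-\cot_K r)Q'+\tfrac14\int\gamma_K^2 w^2\sqrt{a_K}\dif\theta+\frac{D^2}{4Q}\ \geq\ \frac{DQ'}{Q}\ .$$
Here I would invoke the geometry: Lemma~\ref{lem:gammaK} gives $0\leq\gamma_K\leq(n-1)(\cot_\kappa r-\cot_K r)$, hence $0\leq D\leq(n-1)(\cot_\kappa r-\cot_K r)Q$, and Lemma~\ref{lem:Q'}(ii) gives $Q'\geq0$ for $r<R$. Consequently $(n-1)(\cot_\kappa r-\cot_K r)Q'-DQ'/Q=Q'\big((n-1)(\cot_\kappa r-\cot_K r)-D/Q\big)\geq0$, while the two leftover terms $\tfrac14\int\gamma_K^2 w^2\sqrt{a_K}\dif\theta$ and $D^2/(4Q)$ are manifestly nonnegative, and the inequality follows. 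Tracing the reduction backwards then gives Lemma~\ref{lem:halfCS}.

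The only genuinely delicate step is the recombination in the second paragraph: one has to see that $2\int w_r^2$ must not be discarded but combined with $2\int(w_r+\gamma_K w/2)^2$ into the single square $(2w_r+\tfrac{\gamma_K}{2}w)^2$, so that Cauchy--Schwarz against $w$ yields $(Q'-\tfrac12 D)^2/Q$ rather than $(Q')^2/Q$; the residual deficit $DQ'/Q$ is then precisely of the size that the curvature term $(n-1)(\cot_\kappa r-\cot_K r)Q'$ can absorb, using nothing more than the two-sided bound on $\gamma_K$ and the positivity of $Q'$ already established in part~(i). Everything else is routine algebra.
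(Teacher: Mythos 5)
Your proof is correct and takes essentially the same route as the paper: starting from Lemma~\ref{lem:logq''}, you spend the available square terms through a Cauchy--Schwarz projection onto $w$ and absorb the resulting deficit $\bigl(\int\gamma_K w^2\sqrt{a_K}\,d\theta/Q\bigr)Q'$ with the added term $(n-1)(\cot_\kappa r-\cot_K r)Q'$, using exactly the ingredients the paper uses (Lemma~\ref{lem:gammaK} and $Q'\geq 0$ from Lemma~\ref{lem:Q'}). The only difference is bookkeeping: you merge $2(w_r+\tfrac{\gamma_K}{2}w)^2+2w_r^2$ into the single square $(2w_r+\tfrac{\gamma_K}{2}w)^2+\tfrac{\gamma_K^2}{4}w^2$ and apply Cauchy--Schwarz once, whereas the paper splits $(Q')^2/Q$ into two halves and applies Cauchy--Schwarz twice (once to cancel $2\int(w_r+\tfrac{\gamma_K}{2}w)^2\sqrt{a_K}\,d\theta$, once on $\int ww_r\sqrt{a_K}\,d\theta$ to produce the $-2\int w_r^2\sqrt{a_K}\,d\theta$ entering $\vphi$), yielding the same estimate.
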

\begin{proof}
\begin{multline*}
Q''(r)+(\cot_K r)Q'(r)-\frac{Q'(r)^2}{Q(r)}
\\\geq
 2\int \left(w_r+\frac{\gamma_K}{2}w\right)^2\sqrt{a_K}\dif\theta \\
+\frac{2}{(\sin_K r)^2}\int |\nabla_S w|^2\sqrt{a_K}\dif\theta
+\frac{2l^2}{(\sin_K r)^2}Q\\
-2l(l+1)KQ - (n-1)(K-\kappa)Q \\
-\frac{2\left(\int w(w_r+\gamma_K w/2)\,\sqrt{a_K}\dif\theta\right)^2}
{\int w^2\,\sqrt{a_K}\,\dif\theta}
-\frac{2\left(\int w(w_r+\gamma_K w/2)\,\sqrt{a_K}\dif\theta\right)^2}
{\int w^2\,\sqrt{a_K}\,\dif\theta}\\
\geq  
\frac{2}{(\sin_K r)^2}\int |\nabla_S w|^2\sqrt{a_K}\dif\theta
+\frac{2l^2}{(\sin_K r)^2}Q 
-2l(l+1)KQ \\- (n-1)(K-\kappa)Q 
-\frac{2\left(\int ww_r\sqrt{a_K}\dif\theta+\int\gamma_K w^2/2\,\sqrt{a_K}\dif\theta\right)^2}
{\int w^2\,\sqrt{a_K}\,\dif\theta}\\
\geq
 \frac{\vphi(r)}{(\sin_K r)^2}+\frac{2l^2}{(\sin_K r)^2}Q -\frac{\int\gamma_K w^2\sqrt{a_K}\dif\theta}{\int w^2\sqrt{a_K}\dif\theta}
Q'
+\frac{(\int \gamma_K w^2\sqrt{a_K}\dif\theta)^2}{2\int w^2\sqrt{a_K}\dif\theta}\\
-2l(l+1)KQ - (n-1)(K-\kappa)Q\\
\geq 
\frac{\vphi(r)}{(\sin_K r)^2}+\frac{2l^2}{(\sin_K r)^2}Q \\
-(n-1)(\cot_\kappa r-\cot_K r)Q'
-2l(l+1)KQ - (n-1)(K-\kappa)Q\ .
\end{multline*}
The first inequality is just a rewriting of Lemma~\ref{lem:logq''}.
In the second inequality we applied Cauchy-Schwarz inequality on
the last term. In the third inequality we unfolded the parentheses
in the last term and applied Cauchy-Schwarz inequality on
the term $\int ww_r\sqrt{a_K}\dif\theta$. In the last inequality
we used the fact that $Q'\geq 0$ (Lemma~\ref{lem:Q'}) and the
estimates on $\gamma_K$ in Lemma~\ref{lem:gammaK}. 
\end{proof}
It remains to control the function $\vphi$ in terms of $Q$ and $Q'$.
We would like first to calculate the derivative of $\vphi$.
To that end, we
recall the definition and some of the properties 
of the Hessian as a bilinear form: 
$$\mathrm{Hess} f(X, Y) := XYf-(\nabla_X Y) f = 
\langle Y, \nabla_X \grad f\rangle =\langle X, \nabla_Y\grad f\rangle\ .$$

In a geodesic ball centred at $p$, 
we have a radial field $\grad\, r =\partial_r$, tangent
to the geodesics emanating from $p$.
Since $\partial_r$
is tangent to a geodesic, we have $\nabla_{\partial_r}\partial_r =0$.
As a consequence $(\mathrm{Hess}\, r)(\partial_r, Y)=0$ for all vectors $Y$.
When computing the derivative of $\vphi$, it is convenient to have
the following formula:
\begin{lemma}
\label{lem:formula}
$$(|\nabla_S f|^2)_r
=2\langle\nabla_S f, \nabla_S f_r\rangle
-2\Hess(r)(\nabla_S f, \nabla_S f)
+2(\cot_K r)|\nabla_S f|^2$$
\end{lemma}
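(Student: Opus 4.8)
The plan is to differentiate $|\nabla_S f|^2 = (\sin_K r)^2\langle \nabla f - f_r\partial_r,\ \nabla f - f_r\partial_r\rangle$ with respect to $r$, using that $\partial_r$ is a unit geodesic field. Write $X:=\nabla_S f$, a field orthogonal to $\partial_r$. Since $\nabla_{\partial_r}\partial_r = 0$, we have
$$
\partial_r\langle X,X\rangle = 2\langle \nabla_{\partial_r}X,\ X\rangle,
$$
so the whole computation reduces to identifying the tangential part of $\nabla_{\partial_r}X$. The first step is to expand $\nabla_{\partial_r}X$ in terms of $\nabla_{\partial_r}(\nabla f)$ and $\nabla_{\partial_r}(f_r\partial_r)$; the term $\nabla_{\partial_r}(f_r\partial_r) = (\partial_r f_r)\partial_r = f_{rr}\partial_r$ is purely radial and drops out of the inner product with $X$. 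For the remaining term, I would use the definition of the Hessian: for any field $Y$,
$$
\langle \nabla_{\partial_r}(\grad f),\ Y\rangle = \Hess f(\partial_r, Y),
$$
and symmetry of the Hessian lets me trade this for $\Hess f(Y,\partial_r)$ when convenient.

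The key algebraic step is to relate $\langle \nabla_{\partial_r}\nabla f,\ X\rangle$ to $\langle \nabla_S f,\ \nabla_S f_r\rangle$ and a Hessian-of-$r$ term. I would first observe that $\nabla_S f_r = (\sin_K r)(\nabla f_r - f_{rr}\partial_r)$, so $\langle \nabla_S f,\ \nabla_S f_r\rangle = (\sin_K r)^2\langle X/(\sin_K r),\ \nabla f_r\rangle$ since $X\perp\partial_r$; thus $\langle X, \nabla f_r\rangle$ is exactly $\langle\nabla_S f,\nabla_S f_r\rangle/(\sin_K r)$ up to the normalization factor. Next I would commute covariant derivatives: $\nabla_{\partial_r}\grad f$ and $\grad f_r$ differ precisely by a Hessian-of-$r$ correction, because $f_r = \langle\grad f,\partial_r\rangle$ gives $\grad f_r = \nabla_{(\cdot)}(\grad f)$ paired against $\partial_r$ plus $\Hess r(\cdot, \grad f)$; contracting against $X$ and using $\Hess r(\partial_r,\cdot)=0$ isolates the term $\Hess r(X, X)$ (after projecting $\grad f$ onto its tangential part $X/(\sin_K r)$, the radial part contributing nothing since $\Hess r$ kills $\partial_r$). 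Finally, the explicit factor $(\sin_K r)^2$ in front of $|\nabla_S f|^2$ contributes, upon differentiation, a term $2(\sin_K r)'(\sin_K r)\langle\cdots\rangle = 2(\cot_K r)|\nabla_S f|^2$, which accounts for the last summand.

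Assembling: $\partial_r|\nabla_S f|^2$ equals $2(\cot_K r)|\nabla_S f|^2$ (from the radial weight) plus $2(\sin_K r)^2\langle X/(\sin_K r),\ \partial_r\text{-derivative of the tangential gradient}\rangle$, and the latter splits as $2\langle\nabla_S f,\nabla_S f_r\rangle - 2\Hess r(\nabla_S f,\nabla_S f)$ once the Hessian correction from commuting $\nabla_{\partial_r}$ past $\grad$ is extracted. I expect the main obstacle to be bookkeeping the $\sin_K r$ normalizations consistently — in particular making sure the Hessian-of-$r$ term emerges with the tangential vectors $\nabla_S f$ (not $\nabla f$ or $X/\sin_K r$) and with the correct coefficient $-2$ rather than $-2(\sin_K r)^{-2}$ or similar — and checking that all radial contributions genuinely vanish by virtue of $\Hess r(\partial_r,\cdot)=0$ and $\nabla_{\partial_r}\partial_r=0$. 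This is purely a careful application of the Hessian identities recalled just above the lemma, so no new geometric input is needed beyond the symmetry and the radial-annihilation property of $\Hess r$.
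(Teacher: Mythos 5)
Your outline is correct, and it is essentially the paper's computation run from the other end: you differentiate $|\nabla_S f|^2=(\sin_K r)^2|\nabla f-f_r\partial_r|^2$ directly and obtain the $\Hess(r)$ correction by differentiating $f_r=\langle\grad f,\partial_r\rangle$ (Hessian symmetry), whereas the paper starts from $\Hess(r)(\nabla_S f,\nabla_S f)=(\sin_K r)^2\langle\nabla f,\nabla_{\nabla f}\partial_r\rangle$ and uses torsion-freeness via the bracket $[\nabla f,\partial_r]$ — the same inputs ($\nabla_{\partial_r}\partial_r=0$, $\Hess(r)(\partial_r,\cdot)=0$, symmetry of the Hessian, metric compatibility) in a different order. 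The normalizations you flag do work out: the radial weight gives $2(\cot_K r)|\nabla_S f|^2$ and the projected Hessian term appears exactly as $-2\Hess(r)(\nabla_S f,\nabla_S f)$.
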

\begin{proof}
\begin{multline*}
2\Hess(r)(\nabla_S f, \nabla_S f) =
2(\sin_K r)^2\Hess(r)(\nabla f, \nabla f) \\=
2(\sin_K r)^2\langle \nabla f,\nabla_{\nabla f}\partial_r\rangle
=2(\sin_K r)^2\langle \nabla f, \nabla_{\partial_r} \nabla f 
+[\nabla f, \partial_r]\rangle \\
=(\sin_K r)^2(|\nabla f|^2)_r
+2(\sin_K r)^2[\nabla f, \partial_r]f \\
=-(\sin_K r)^2(|\nabla f|^2)_r+2(\sin_K r)^2\langle \nabla f,\nabla f_r \rangle \\
=-(\sin_K r)^2\left(f_r^2+(\sin_K r)^{-2}|\nabla_S f|^2\right)_r
+2(\sin_K r)^2f_rf_{rr}
\\+2\langle \nabla_S f, \nabla_S f_r\rangle
=-(|\nabla_S f|^2)_r
+2(\cot_K r)|\nabla_S f|^2+2\langle \nabla_S f, \nabla_S f_r \rangle
\end{multline*}
\end{proof}
Using the formula in Lemma~\ref{lem:formula}
we can readily compute the derivative of $\vphi(r)$ (defined in 
Lemma~\ref{lem:halfCS}):
\begin{lemma}
\label{lem:phi'}
\begin{multline*}
\vphi'(r) = -4\int \Hess(r)(\nabla_S w, \nabla_S w)\,
\sqrt{a_K}\dif\theta \\
+4(\cot_K r)\int |\nabla_S w|^2\,\sqrt{a_K}\dif\theta
+2l(l+1)K(\sin_K r)^2Q'-2l^2Q' \\
+2(\sin_K r)^2\int |\nabla w|^2\gamma_K\sqrt{a_K}\dif\theta \\
+2l^2\int w^2\gamma_K\sqrt{a_K}\dif\theta
-2l(l+1)K\sin_K^2 r\int w^2\gamma_K\sqrt{a_K}\dif\theta
\end{multline*}
\end{lemma}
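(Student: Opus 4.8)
The plan is to differentiate the two integrals comprising $\vphi(r)$ term by term, using Lemma~\ref{lem:formula} for the angular gradient term and equation~(\ref{eqn:w-eqn}) to eliminate second $r$-derivatives of $w$. Recall
$$\vphi(r)=-2(\sin_K r)^2\int w_r^2\sqrt{a_K}\dif\theta+2\int|\nabla_S w|^2\sqrt{a_K}\dif\theta\ .$$
First I would handle the second integral. Differentiating under the integral sign and recalling $(\sqrt{a_K})_r=\gamma_K\sqrt{a_K}$, I get a contribution $2\int(|\nabla_S w|^2)_r\sqrt{a_K}\dif\theta$ plus a term $2\int|\nabla_S w|^2\gamma_K\sqrt{a_K}\dif\theta$. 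Into the first of these I substitute the formula of Lemma~\ref{lem:formula} with $f=w$, producing exactly the $-4\int\Hess(r)(\nabla_S w,\nabla_S w)\sqrt{a_K}\dif\theta$ and $4(\cot_K r)\int|\nabla_S w|^2\sqrt{a_K}\dif\theta$ terms seen in the claim, together with a cross term $4\int\langle\nabla_S w,\nabla_S w_r\rangle\sqrt{a_K}\dif\theta$ which I will integrate by parts using Green's formula~(\ref{identity:green}), turning it into $-4\int w_r(\Delta_S w)\sqrt{a_K}\dif\theta$.

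Next I would differentiate the first integral $-2(\sin_K r)^2\int w_r^2\sqrt{a_K}\dif\theta$. This produces $-4(\sin_K r)(\cos_K r)\int w_r^2\sqrt{a_K}\dif\theta$ — more precisely $-4(\sin_K r)^2(\cot_K r)\int w_r^2\sqrt{a_K}\dif\theta$ — a main term $-4(\sin_K r)^2\int w_r w_{rr}\sqrt{a_K}\dif\theta$, and a term $-2(\sin_K r)^2\int w_r^2\gamma_K\sqrt{a_K}\dif\theta$. The key move is to replace $w_{rr}$ via~(\ref{eqn:w-eqn}):
$$w_{rr}=-(\cot_K r+\gamma_K)w_r-l(l+1)Kw+\frac{l^2 w}{(\sin_K r)^2}-\frac{\Delta_S w}{(\sin_K r)^2}\ .$$
Substituting this into $-4(\sin_K r)^2\int w_r w_{rr}\sqrt{a_K}\dif\theta$ yields five pieces: one that cancels the $-4(\sin_K r)^2(\cot_K r)\int w_r^2$ term against a matching $+4(\sin_K r)^2(\cot_K r)\int w_r^2$; one $+4(\sin_K r)^2\int w_r^2\gamma_K\sqrt{a_K}\dif\theta$ which combines with the earlier $-2(\sin_K r)^2\int w_r^2\gamma_K$; the term $+4l(l+1)K(\sin_K r)^2\int w w_r\sqrt{a_K}\dif\theta$; the term $-4l^2\int w w_r\sqrt{a_K}\dif\theta$; and $+4\int w_r(\Delta_S w)\sqrt{a_K}\dif\theta$, which cancels exactly against the $-4\int w_r(\Delta_S w)$ coming from the integration by parts in the previous paragraph. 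That cancellation of the $\Delta_S w$ terms is the linchpin of the computation and the step I expect to require the most care with signs.

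Finally I would reconcile the surviving $\int w w_r$ and $\int w^2\gamma_K$ pieces with the right-hand side of the claim using Lemma~\ref{lem:Q'}(i), which says $Q'=\int 2w(w_r+\gamma_K w/2)\sqrt{a_K}\dif\theta=\int 2ww_r\sqrt{a_K}\dif\theta+\int w^2\gamma_K\sqrt{a_K}\dif\theta$. Thus $4l(l+1)K(\sin_K r)^2\int ww_r\sqrt{a_K}\dif\theta$ is rewritten as $2l(l+1)K(\sin_K r)^2 Q' - 2l(l+1)K(\sin_K r)^2\int w^2\gamma_K\sqrt{a_K}\dif\theta$, and likewise $-4l^2\int ww_r\sqrt{a_K}\dif\theta=-2l^2 Q'+2l^2\int w^2\gamma_K\sqrt{a_K}\dif\theta$; these account for the remaining terms $+2l(l+1)K(\sin_K r)^2 Q'-2l^2 Q'+2l^2\int w^2\gamma_K\sqrt{a_K}\dif\theta-2l(l+1)K\sin_K^2 r\int w^2\gamma_K\sqrt{a_K}\dif\theta$ in the statement. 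The leftover gradient-square term $2\int|\nabla_S w|^2\gamma_K\sqrt{a_K}\dif\theta$ together with $2(\sin_K r)^2\int w_r^2\gamma_K\sqrt{a_K}\dif\theta$ reassembles, via $|\nabla w|^2=w_r^2+(\sin_K r)^{-2}|\nabla_S w|^2$, into $2(\sin_K r)^2\int|\nabla w|^2\gamma_K\sqrt{a_K}\dif\theta$, matching the claim. The only real obstacle is bookkeeping: keeping track of the numerous $(\sin_K r)^2$ weights and the several cancellations (the $(\cot_K r)\int w_r^2$ pair and the $\int w_r\Delta_S w$ pair) without a sign error; once those cancellations are verified, the identity follows.
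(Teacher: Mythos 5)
Your computation is correct and is exactly the argument the paper intends: differentiate $\vphi$ under the integral (picking up the $\gamma_K$ factors from $(\sqrt{a_K})_r$), apply Lemma~\ref{lem:formula} together with Green's formula~(\ref{identity:green}) to the angular term, eliminate $w_{rr}$ via equation~(\ref{eqn:w-eqn}) so that the $\cot_K r\int w_r^2$ and $\int w_r\Delta_S w$ pairs cancel, and regroup the surviving $\int ww_r$ and $\gamma_K$ terms using Lemma~\ref{lem:Q'}(i) and $|\nabla w|^2=w_r^2+(\sin_K r)^{-2}|\nabla_S w|^2$. All signs and weights check out, so there is nothing to add.
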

\begin{lemma}
\label{lem:phi'bound}
\begin{multline*}
\vphi'(r)\geq  -4(\cot_\kappa r-\cot_K r)
\int |\nabla_S w|^2\sqrt{a_K}\dif\theta
+2l(l+1)K(\sin_K r)^2Q'
-2l^2Q'\\
-2l(\cot_\kappa r-\cot_K r)K^{+}(\sin_K r)^2 Q\ .
\end{multline*}
\end{lemma}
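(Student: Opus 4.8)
The plan is to derive Lemma~\ref{lem:phi'bound} from the exact formula for $\vphi'(r)$ in Lemma~\ref{lem:phi'} by estimating that expression term by term. The only inputs needed are the Hessian Comparison Theorem in the form~(\ref{ineq:hessian}) and the two--sided control of $\gamma_K$ from Lemma~\ref{lem:gammaK}, namely $0\le\gamma_K\le (n-1)(\cot_\kappa r-\cot_K r)$; no new identity is required, since all of the differentiation has already been packaged into Lemma~\ref{lem:phi'}.

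First I would dispose of the Hessian contribution. Applying the upper bound $\Hess(r)(X,X)\le(\cot_\kappa r)\|X\|^2$ from~(\ref{ineq:hessian}) pointwise with $X=\nabla_S w(r,\theta)$ gives $-4\int\Hess(r)(\nabla_S w,\nabla_S w)\sqrt{a_K}\dif\theta\ge -4(\cot_\kappa r)\int|\nabla_S w|^2\sqrt{a_K}\dif\theta$, and adding the term $+4(\cot_K r)\int|\nabla_S w|^2\sqrt{a_K}\dif\theta$ already present in Lemma~\ref{lem:phi'} produces exactly the first term $-4(\cot_\kappa r-\cot_K r)\int|\nabla_S w|^2\sqrt{a_K}\dif\theta$ of the claim. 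The two terms $2l(l+1)K(\sin_K r)^2Q'$ and $-2l^2Q'$ are carried over verbatim.

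The heart of the matter is the three remaining terms, each carrying a factor $\gamma_K$: namely $2(\sin_K r)^2\int|\nabla w|^2\gamma_K\sqrt{a_K}\dif\theta$, $2l^2\int w^2\gamma_K\sqrt{a_K}\dif\theta$, and $-2l(l+1)K(\sin_K r)^2\int w^2\gamma_K\sqrt{a_K}\dif\theta$. Since $\gamma_K\ge0$ by Lemma~\ref{lem:gammaK} and $|\nabla w|^2,w^2\ge0$, the first term is nonnegative and may be discarded, and the second is nonnegative; I would retain just enough of the second to pair with the third. When $K\le0$ the third term is itself nonnegative and $K^{+}=0$, so there is nothing to prove. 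When $K>0$ one writes the relevant combination as $2l\int w^2\gamma_K\bigl(l-(l+1)K(\sin_K r)^2\bigr)\sqrt{a_K}\dif\theta$ and uses that $K(\sin_K r)^2=\sin^2(r\sqrt{K})\in[0,1)$ for $r<R\le\pi/(2\sqrt{K^{+}})$ — together with $\gamma_K\le(n-1)(\cot_\kappa r-\cot_K r)$ and the particular value $l=(n-2)/2$, so that $2l=n-2$ — to bound this combination below by $-2l(\cot_\kappa r-\cot_K r)K^{+}(\sin_K r)^2Q$, which is the last term of the claim.

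I expect this last estimate to be the delicate step. One has to weigh the positive term $2l^2\int w^2\gamma_K\sqrt{a_K}\dif\theta$ against the possibly negative term $-2l(l+1)K(\sin_K r)^2\int w^2\gamma_K\sqrt{a_K}\dif\theta$ carefully enough that the surviving coefficient is precisely $2l$, rather than the much larger constant a crude bound on $\gamma_K$ alone would produce, and one must track signs across the radius at which $l-(l+1)K(\sin_K r)^2$ changes sign. It is exactly here that the hypothesis $r<R\le\pi/(2\sqrt{K^{+}})$ (which forces $K(\sin_K r)^2<1$) and the special value $l=(n-2)/2$ enter; everything else is the routine calculus already contained in Lemma~\ref{lem:phi'}.
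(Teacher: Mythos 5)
Your route is the same as the paper's (the paper's entire proof of Lemma~\ref{lem:phi'bound} is to apply inequality~(\ref{ineq:hessian}) and Lemma~\ref{lem:gammaK} to the exact formula of Lemma~\ref{lem:phi'}), and the routine half of your argument is fine: the Hessian comparison converts the first two terms of Lemma~\ref{lem:phi'} into $-4(\cot_\kappa r-\cot_K r)\int|\nabla_S w|^2\sqrt{a_K}\dif\theta$, the two $Q'$ terms are carried over verbatim, and the term $2(\sin_K r)^2\int|\nabla w|^2\gamma_K\sqrt{a_K}\dif\theta$ may be discarded since $\gamma_K\ge0$.

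However, the step you yourself flag as delicate is exactly where your argument does not close. Writing the remaining two terms as $2l\int w^2\gamma_K\bigl(l-(l+1)K(\sin_K r)^2\bigr)\sqrt{a_K}\dif\theta$ and using $l-(l+1)K(\sin_K r)^2\ge -K^{+}(\sin_K r)^2$ (valid because $K(\sin_K r)^2=\sin^2(r\sqrt K)\le 1$ when $K>0$ and $r<R$), you are left with $-2lK^{+}(\sin_K r)^2\int w^2\gamma_K\sqrt{a_K}\dif\theta$, and the only upper bound available for $\gamma_K$ is $\gamma_K\le(n-1)(\cot_\kappa r-\cot_K r)$ from Lemma~\ref{lem:gammaK}. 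This gives $-2l(n-1)(\cot_\kappa r-\cot_K r)K^{+}(\sin_K r)^2Q$, i.e.\ an extra factor $n-1$ compared with what you assert; no ``careful weighing'' of the two terms removes it, because in the regime $\sin^2(r\sqrt K)>l/(l+1)$ the combined coefficient is genuinely negative and multiplies $\int w^2\gamma_K\sqrt{a_K}\dif\theta$, which can be as large as $(n-1)(\cot_\kappa r-\cot_K r)Q$ (for instance when $N$ has constant curvature $\kappa<K$, where $\gamma_K\equiv(n-1)(\cot_\kappa r-\cot_K r)$; testing $u\equiv 1$ there with $r\sqrt K$ close to $\pi/2$ shows the bound with coefficient $2l$ cannot be deduced from Lemma~\ref{lem:phi'}). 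So your proposal actually establishes the inequality only with last term $-2l(n-1)(\cot_\kappa r-\cot_K r)K^{+}(\sin_K r)^2Q$; this discrepancy is inherited from the statement itself rather than from your strategy, and the weaker bound suffices for everything downstream (it merely inflates the dimensional constants in Lemma~\ref{lem:phi-bound} and in part~(ii) of Theorem~\ref{thm:perturbed-convexity}), but as written your claim that the surviving coefficient is precisely $2l$ is not justified, and you should either supply the missing cancellation or carry the factor $n-1$ through.
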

\begin{proof}
This is due to inequality~(\ref{ineq:hessian}) and Lemma~\ref{lem:phi'}.
\end{proof}
In Lemma~\ref{lem:phi-bound} we integrate the inequality in Lemma~\ref{lem:phi'bound}. We need a few lemmas before that:
\begin{lemma}
\label{lem:int-sin-q'}
$$\left(1-\frac{2}{n}\right)(\sin_K r)^2Q(r)\leq\int_0^r(\sin_K\rho)^2Q'(\rho)\dif\rho \leq (\sin_K r)^2Q(r)\ .$$
\end{lemma}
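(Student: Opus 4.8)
The plan is to reduce everything to a single integration by parts. Write $(\sin_K\rho)'=(\cot_K\rho)(\sin_K\rho)$ and integrate by parts:
$$
\int_0^r (\sin_K\rho)^2 Q'(\rho)\dif\rho = (\sin_K r)^2 Q(r) - \lim_{\rho\to 0^+}(\sin_K\rho)^2Q(\rho) - 2\int_0^r (\sin_K\rho)^2(\cot_K\rho)\,Q(\rho)\dif\rho .
$$
The boundary term at $0$ vanishes: near $p$ the harmonic function $u$ and $\sqrt{a_K}$ are bounded, and $w=(\sin_K\rho)^l u$ with $l=(n-2)/2$, so $Q(\rho)=\int w^2\sqrt{a_K}\dif\theta = O(\rho^{n-2})$, whence $(\sin_K\rho)^2Q(\rho)=O(\rho^n)\to 0$. (Equivalently, $Q=q/\sin_K r$ and $q(r)\to 0$ as $r\to 0$.) Therefore
\begin{equation}
\label{eqn:ibp-int-sin}
\int_0^r (\sin_K\rho)^2 Q'(\rho)\dif\rho = (\sin_K r)^2 Q(r) - 2\int_0^r (\sin_K\rho)^2(\cot_K\rho)\,Q(\rho)\dif\rho .
\end{equation}

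\textbf{Upper bound.} For $\rho<R\le\pi/(2\sqrt{K^+})$ one has $\sin_K\rho>0$, and $(\sin_K\rho)'\ge 0$ (namely $\cos(\rho\sqrt{K})\ge 0$ when $K>0$, and $\cosh(\rho\sqrt{-K})\ge 1$ when $K\le 0$), so $\cot_K\rho>0$ on $(0,R)$. Since also $Q\ge 0$ (it is an integral of $w^2$), the last integral in~(\ref{eqn:ibp-int-sin}) is nonnegative, which gives $\int_0^r (\sin_K\rho)^2 Q'(\rho)\dif\rho \le (\sin_K r)^2 Q(r)$.

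\textbf{Lower bound.} If $n=2$ the claimed lower bound is $0\le \int_0^r(\sin_K\rho)^2Q'\dif\rho$, which holds because $Q'\ge 0$ by Lemma~\ref{lem:Q'}(ii). For $n\ge 3$, Lemma~\ref{lem:Q'}(ii) gives $(\cot_K\rho)\,Q(\rho)\le \tfrac{1}{n-2}Q'(\rho)$, so
$$
2\int_0^r (\sin_K\rho)^2(\cot_K\rho)\,Q(\rho)\dif\rho \le \frac{2}{n-2}\int_0^r (\sin_K\rho)^2 Q'(\rho)\dif\rho .
$$
Denoting $I:=\int_0^r (\sin_K\rho)^2 Q'(\rho)\dif\rho$ and substituting into~(\ref{eqn:ibp-int-sin}) yields $I\ge (\sin_K r)^2 Q(r)-\tfrac{2}{n-2}I$, i.e. $\tfrac{n}{n-2}I\ge (\sin_K r)^2Q(r)$, which is exactly $I\ge(1-\tfrac2n)(\sin_K r)^2Q(r)$.

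The only point requiring a little care is the self-referential estimate in the last paragraph (feeding the bound on the error integral back into $I$), together with the observation that the boundary term at the origin really does vanish and that $\cot_K\rho$ keeps a sign on $(0,R)$; there is no substantial obstacle beyond bookkeeping.
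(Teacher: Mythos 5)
Your proof is correct, but it is organized differently from the paper's. The paper gets the upper bound immediately from the monotonicity of $\rho\mapsto\sin_K\rho$ together with $Q'\geq 0$, and proves the lower bound by differentiating both sides (both vanish at $r=0$) and observing that the resulting pointwise inequality $\left(1-\tfrac2n\right)\bigl(2(\cot_K r)Q+Q'\bigr)\leq Q'$ is exactly part~(ii) of Lemma~\ref{lem:Q'}; no integration by parts and no absorption step are needed. You instead establish the exact identity
$\int_0^r(\sin_K\rho)^2Q'\dif\rho=(\sin_K r)^2Q(r)-2\int_0^r(\sin_K\rho)^2(\cot_K\rho)Q\dif\rho$
(after checking that the boundary term at the origin vanishes, which you do correctly via $Q=q/\sin_K r=O(\rho^{n-2})$), read off the upper bound from $\cot_K\rho>0$ on $(0,R)$, and for the lower bound feed Lemma~\ref{lem:Q'}(ii) into the error integral and absorb it back into $I$. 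The two arguments rest on the same key input, Lemma~\ref{lem:Q'}(ii), and are essentially integrated versus differentiated forms of one another; the paper's version is shorter and sidesteps the boundary-term discussion (though it too implicitly uses that both sides vanish at $r=0$, which you verify explicitly), while your identity makes the origin of the factor $1-\tfrac2n$ transparent and isolates the $n=2$ case cleanly. Both are complete proofs.
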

\begin{proof}
The RHS follows from the fact that $\sin_K\rho$ is monotonically
increasing in $\rho$ and $Q'\geq 0$.
By derivating the LHS we see that it is enough to prove
\begin{equation}
\label{ineq:lhs'}
\left(1-\frac{2}{n}\right)(\sin_K r)^2(2(\cot_K r) Q(r)+
Q'(r))\leq (\sin_K r)^2Q'(r)\ .
\end{equation}
Inequality~(\ref{ineq:lhs'}) is equivalent to part~(ii) of Lemma~\ref{lem:Q'}.
\end{proof}
\begin{lemma}
$$\int_0^r\int|\nabla_S w|^2\sqrt{a_K}\dif\theta\dif\rho
\leq\frac{(\sin_K r)^2}{2}\left(Q'(r)-(n-2)(\cot_K r)Q\right)\ .$$
\end{lemma}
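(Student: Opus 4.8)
The plan is to bring in the Dirichlet energy
\[
E(\rho):=\int_{B(p,\rho)}|\nabla u|^2\dif\Vol\ ,
\]
and to play off two of its features. First, by Green's formula and the harmonicity of $u$ — precisely the computation in the proof of Lemma~\ref{lem:uur>0} — one has $2E(\rho)=\int_{S(\rho)}2uu_r\,dA_\rho\geq 0$; and differentiating the volume integral in geodesic polar coordinates gives $E'(\rho)=\int_{S(\rho)}|\nabla u|^2\,dA_\rho$. Splitting $|\nabla u|^2=u_r^2+(\sin_K\rho)^{-2}|\nabla_S u|^2$, recalling that $\nabla_S$ involves only angular derivatives so that $w=(\sin_K\rho)^lu$ with $2l=n-2$ satisfies $|\nabla_S w|^2=(\sin_K\rho)^{n-2}|\nabla_S u|^2$, and keeping track of the factor $(\sin_K\rho)^{n-1}$ in $dA_\rho$, I would arrive at
\[
E'(\rho)=\int_{S(\rho)}u_r^2\,dA_\rho+\frac{1}{\sin_K\rho}\int|\nabla_S w|^2\sqrt{a_K}\dif\theta\ ,
\]
whence $\int|\nabla_S w|^2\sqrt{a_K}\dif\theta\leq(\sin_K\rho)\,E'(\rho)$ for $0<\rho<r$.

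Next I would integrate this inequality over $\rho\in(0,r)$ and integrate by parts. Since $\sin_K 0=0$, $E(0)=0$, and for $r<R\leq\pi/(2\sqrt{K^+})$ both $E$ and $(\sin_K\rho)'=(\cot_K\rho)(\sin_K\rho)$ are nonnegative on $(0,r)$, this yields
\[
\int_0^r\int|\nabla_S w|^2\sqrt{a_K}\dif\theta\dif\rho\ \leq\ \int_0^r(\sin_K\rho)\,E'(\rho)\dif\rho\ \leq\ (\sin_K r)\,E(r)\ .
\]

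It remains to bound $E(r)$ in terms of $Q$ and $Q'$. From Lemma~\ref{lem:q'}, $2E(r)=\int 2uu_r\,dA_r=q'(r)-\int u^2\gamma_K\,dA_r-(n-1)(\cot_K r)q(r)\leq q'(r)-(n-1)(\cot_K r)q(r)$, where the inequality uses $\gamma_K\geq 0$ (Lemma~\ref{lem:gammaK}). Since $q=(\sin_K r)Q$ and $(\sin_K r)'=(\cot_K r)(\sin_K r)$, the right-hand side equals $(\sin_K r)\bigl(Q'(r)-(n-2)(\cot_K r)Q(r)\bigr)$, so $E(r)\leq\frac{1}{2}(\sin_K r)\bigl(Q'(r)-(n-2)(\cot_K r)Q(r)\bigr)$. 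Combining this with the previous display gives the asserted inequality; note that its right-hand side is nonnegative by Lemma~\ref{lem:Q'}(ii). The only point requiring attention is the first step — correctly bookkeeping the powers of $\sin_K\rho$ that link $u$, $w$, their angular gradients and the area element $dA_\rho$ — together with the sign facts ($(\cot_K\rho)(\sin_K\rho)\geq 0$ and $\gamma_K\geq 0$) that let one discard $\int u_r^2\,dA_\rho$ and the boundary term; none of this is a genuine obstacle.
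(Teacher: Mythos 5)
Your proof is correct and follows essentially the same route as the paper's: bound the angular gradient $|\nabla_S w|^2=(\sin_K\rho)^{n-2}|\nabla_S u|^2$ by the full gradient, use monotonicity of $\sin_K$ to extract a factor $\sin_K r$, convert $\int_{B(p,r)}|\nabla u|^2\dif\Vol$ to the boundary term $\int uu_r\,dA_r$ by Green's formula and harmonicity, and finish with $\gamma_K\geq 0$ and the relation $q=(\sin_K r)Q$. The only difference is bookkeeping: you package the Dirichlet energy as $E(\rho)$ and integrate by parts, whereas the paper bounds the integrand pointwise, which is an immaterial variation.
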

\begin{proof}
\begin{multline*}
\int_0^r\int|\nabla_S w|^2\sqrt{a_K}\dif\theta\dif\rho
=\int_0^r\int|\nabla_S u|^2(\sin_K\rho)^{n-2}
\sqrt{a_K}\dif\theta\dif\rho\\
\leq\int_0^r\int|\nabla u|^2(\sin_K\rho)^n\sqrt{a_K}\dif\theta\dif\rho
\\
\leq\sin_K r\int_0^r\int|\nabla u|^2(\sin_K\rho)^{n-1}\sqrt{a_K}\dif\theta\dif\rho
\\ =\sin_K r\int_{B(p, r)} |\nabla u|^2\dif\Vol
=\sin_K r\int uu_r(\sin_K r)^{n-1}\sqrt{a_K}\dif\theta\\
=(\sin_K r)^2\int ww_r\sqrt{a_K}\dif\theta-l\cot_K r(\sin_K r)^2\int w^2\sqrt{a_K}\dif\theta\\
=(\sin_K r)^2\int w(w_r+\gamma_K w/2)\sqrt{a_K}\dif\theta
-l(\cot_K r)(\sin_K r)^2\int w^2\sqrt{a_K}\dif\theta\\
-(\sin_K r)^2\int w^2\gamma_K/2\sqrt{a_K}\dif\theta
\leq\frac{(\sin_K r)^2}{2}\left(Q'(r)-(n-2)(\cot_K r)Q\right)\ .
\end{multline*}
\end{proof}
\begin{lemma}
\label{lem:int-sin-q}
$$\int_0^r (\sin_K \rho)^2 Q(\rho)\dif\rho 
\leq r(\sin_K r)^2Q(r)\ . $$
\end{lemma}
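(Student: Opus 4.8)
The plan is to reduce this to the monotonicity of a single auxiliary function. Put $g(\rho) := (\sin_K\rho)^2 Q(\rho)$; the claimed inequality is precisely $\int_0^r g(\rho)\dif\rho \leq r\, g(r)$, which holds as soon as $g$ is nondecreasing on $(0,r)$. So the whole task is to check that $g$ is nondecreasing for $\rho < R$.

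To see this I would simply differentiate. Using $(\sin_K\rho)' = (\cot_K\rho)(\sin_K\rho)$ one gets
$$g'(\rho) = (\sin_K\rho)^2\bigl(2(\cot_K\rho)\,Q(\rho) + Q'(\rho)\bigr)\ .$$
Now $Q \geq 0$, and $Q'(\rho) \geq 0$ by part~(ii) of Lemma~\ref{lem:Q'} (equivalently, by part~(i) of Theorem~\ref{thm:perturbed-convexity}). It then remains only to note that $\cot_K\rho \geq 0$ for $\rho < R$: this is automatic when $K \leq 0$, and when $K > 0$ it follows from $\rho < R \leq \pi/(2\sqrt{K})$. Hence $g'(\rho) \geq 0$ on $(0,R)$, so $g(\rho) \leq g(r) = (\sin_K r)^2 Q(r)$ for all $\rho \in [0,r]$; integrating this inequality over $[0,r]$ yields the lemma.

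There is essentially no obstacle here — the argument is the same ``increasing integrand'' observation already used for Lemma~\ref{lem:int-sin-q'}. The only point needing a word of care is the sign of the extra term $2(\cot_K\rho)Q(\rho)$, which is where the standing restriction $\rho < R$ enters, through $\cot_K\rho \geq 0$ (the same restriction is also what makes the lower bound $(n-2)(\cot_K\rho)Q(\rho)\geq0$ in Lemma~\ref{lem:Q'} meaningful).
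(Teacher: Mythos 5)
Your proof is correct and is essentially the paper's own argument: the paper's one-line proof just observes that $\sin_K\rho$ and $Q(\rho)$ are (nonnegative and) monotonically increasing in $\rho$ on the relevant range, so the integrand $(\sin_K\rho)^2Q(\rho)$ is bounded by its value at $r$. Your differentiation of $g(\rho)=(\sin_K\rho)^2Q(\rho)$ verifies exactly this monotonicity, from the same inputs (part~(ii) of Lemma~\ref{lem:Q'} and $\cot_K\rho\geq 0$ for $\rho<R$), so there is nothing to add.
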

\begin{proof}
$\sin_K\rho$ and $Q(\rho)$ are monotonically increasing in $\rho$.
\end{proof}
\begin{lemma}
\label{lem:phi-bound}
\begin{multline*}
\frac{\vphi(r)}{(\sin_K r)^2}\geq 
-2(\cot_\kappa r-\cot_K r)\left(Q'-(n-2)(\cot_K r) Q\right)\\
+\frac{n(n-2)}{2}KQ-(n-2)K^+Q-\frac{(n-2)^2Q}{2(\sin_K r)^2}
-(n-2)(\cot_\kappa r-\cot_K r)rK^{+}Q.
\end{multline*}
\end{lemma}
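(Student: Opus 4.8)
The plan is to integrate the pointwise inequality of Lemma~\ref{lem:phi'bound} over $(0,r)$, starting from $\vphi(0)=0$, then divide by $(\sin_K r)^2$ and match the resulting integrals against the five terms on the right of Lemma~\ref{lem:phi-bound}. The vanishing $\vphi(r)\to0$ as $r\to0$ is a routine check from $w=(\sin_K r)^l u$ with $l=(n-2)/2\ge 0$: both $(\sin_K r)^2\int w_r^2\sqrt{a_K}\dif\theta$ and $\int|\nabla_S w|^2\sqrt{a_K}\dif\theta=(\sin_K r)^2\int|\nabla w-w_r\partial_r|^2\sqrt{a_K}\dif\theta$ are $O(r^n)$ near $0$. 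Integrating Lemma~\ref{lem:phi'bound} and using $\int_0^r Q'=Q(r)$ leaves four contributions to bound from below: the one pairing $\cot_\kappa\rho-\cot_K\rho$ with $\int|\nabla_S w|^2\sqrt{a_K}\dif\theta$, the term $2l(l+1)K\int_0^r(\sin_K\rho)^2Q'\dif\rho$, the term $-2l^2Q(r)$, and the term $-2lK^+\int_0^r(\cot_\kappa\rho-\cot_K\rho)(\sin_K\rho)^2Q\dif\rho$.

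Two of these are immediate. After division by $(\sin_K r)^2$ the term $-2l^2Q(r)$ becomes $-\frac{(n-2)^2}{2}Q/(\sin_K r)^2$, since $2l^2=(n-2)^2/2$. For $2l(l+1)K\int_0^r(\sin_K\rho)^2Q'\dif\rho$ I would split on the sign of $K$: when $K\ge0$ apply the left inequality of Lemma~\ref{lem:int-sin-q'} (losing a factor $1-2/n$), and when $K<0$ apply the right one; using $2l(l+1)=n(n-2)/2$ and $2l(l+1)(1-2/n)=(n-2)^2/2$ — so that the gap between the two cases is exactly $(n-2)K^+$ — the two cases merge into the contribution $\frac{n(n-2)}{2}KQ-(n-2)K^+Q$.

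The other two contributions each consist of the weight $\cot_\kappa\rho-\cot_K\rho$ times a nonnegative integrand, and the point is to move this weight outside the integral, replacing it by its value at $\rho=r$. For the $K^+$ term this leaves $\int_0^r(\sin_K\rho)^2Q(\rho)\dif\rho\le r(\sin_K r)^2Q(r)$ by Lemma~\ref{lem:int-sin-q}, producing (after division, using $2l=n-2$) the term $-(n-2)(\cot_\kappa r-\cot_K r)rK^+Q$; for the $|\nabla_S w|^2$ term it leaves $\int_0^r\!\int|\nabla_S w|^2\sqrt{a_K}\dif\theta\dif\rho$, which by the estimate $\int_0^r\!\int|\nabla_S w|^2\sqrt{a_K}\dif\theta\dif\rho\le\frac12(\sin_K r)^2\bigl(Q'(r)-(n-2)(\cot_K r)Q\bigr)$ proved just above — a nonnegative bound by Lemma~\ref{lem:Q'}(ii) — produces the term $-2(\cot_\kappa r-\cot_K r)\bigl(Q'-(n-2)(\cot_K r)Q\bigr)$.

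The legitimacy of moving the weight out is the crux, and I expect the real work to be an auxiliary lemma: $\rho\mapsto\cot_\kappa\rho-\cot_K\rho$ is nondecreasing on $(0,R)$. Granting this (and noting it is $\ge0$, since it tends to $0$ as $\rho\to0$), one has $\int_0^r(\cot_\kappa\rho-\cot_K\rho)\Phi(\rho)\dif\rho\le(\cot_\kappa r-\cot_K r)\int_0^r\Phi$ for every $\Phi\ge0$, which is exactly what is used above. To prove the monotonicity I would use the Riccati identity $(\cot_K\rho)'=-K-\cot_K^2\rho$, whence $(\cot_\kappa\rho-\cot_K\rho)'=(K+\cot_K^2\rho)-(\kappa+\cot_\kappa^2\rho)$, and then check that $K\mapsto K+\cot_K^2\rho$ is nondecreasing: writing $x=K\rho^2$ for $K\ge0$ one has $\rho^2(K+\cot_K^2\rho)=x+x\cot^2\!\sqrt x$, nondecreasing in $x$ by Lemma~\ref{lem:xcot2}(iii), and writing $x=-K\rho^2$ for $K<0$ one has $\rho^2(K+\cot_K^2\rho)=x\coth^2\!\sqrt x-x$, nonincreasing in $x$ by Lemma~\ref{lem:xcot2}(iv); in both cases this means nondecreasing in $K$ (and the two agree at $K=0$). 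Assembling the four estimates, dividing by $(\sin_K r)^2$, and collecting constants then yields precisely Lemma~\ref{lem:phi-bound}.
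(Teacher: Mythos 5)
Your proposal is correct and takes essentially the same route as the paper: integrate Lemma~\ref{lem:phi'bound} from $0$ (using $\vphi(0^+)\geq 0$), pull the nonnegative, nondecreasing weight $\cot_\kappa\rho-\cot_K\rho$ out of the integrals at $\rho=r$, and apply Lemmas~\ref{lem:int-sin-q'}--\ref{lem:int-sin-q} with exactly the case split on the sign of $K$ that produces the $-(n-2)K^{+}Q$ term; the monotonicity of $\cot_\kappa r-\cot_K r$, which the paper merely ``observes,'' is precisely what your Riccati computation together with Lemma~\ref{lem:xcot2}\textup{(iii)}--\textup{(iv)} supplies. One cosmetic point: near $r=0$ the term $(\sin_K r)^2\int w_r^2\sqrt{a_K}\dif\theta$ is only $O(r^{n-2})$ in general (not $O(r^n)$), but it still vanishes for $n\geq 2$, so the starting value $\vphi(0^+)=0$ and the rest of your argument are unaffected.
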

\begin{proof}
Observe that the functions
$\cot_\kappa r-\cot_K r$ and $\sin_K r$ are both
 monotonically increasing.
Hence, integrating Lemma~\ref{lem:phi'bound}, applying
Lemmas \ref{lem:int-sin-q'}--\ref{lem:int-sin-q} we obtain
\begin{multline*}
\vphi(r)\geq-4(\cot_\kappa r-\cot_K r)
\int_0^r\int|\nabla_S w|^2\sqrt{a_K}\dif\theta\dif\rho\\
+2l(l+1)K\int_0^r (\sin_K\rho)^2Q'(\rho)\dif\rho
-2l^2Q\\
-2l(\cot_\kappa r-\cot_K r)K^{+}\int_0^r(\sin_K\rho)^2Q(\rho)\dif\rho\\
\geq -2(\sin_K r)^2(\cot_\kappa r-\cot_K r)\left(Q'
-(n-2)(\cot_\kappa r-\cot_K r)(\cot_K r)Q\right)\\
+2l(l+1)K(\sin_K r)^2Q(r)-2l(l+1)K^+(2/n)(\sin_K r)^2Q-2l^2Q\\
-(n-2)(\cot_\kappa r-\cot_K r)rK^+(\sin_K r)^2Q(r)\ .
\end{multline*}
\end{proof}
\begin{proof}[Proof of Theorem~\ref{thm:perturbed-convexity}, part~(ii)]
From Lemma~\ref{lem:halfCS} and Lemma~\ref{lem:phi-bound}
we get
\begin{multline}
\label{ineq:almost-thm}
Q''(r)+(\cot_K r)Q'(r)-\frac{Q'(r)^2}{Q(r)}+(n-1)(\cot_\kappa r-\cot_K r)Q'(r) \\
\geq -2(\cot_\kappa r-\cot_K r)Q'(r)
+2(n-2)(\cot_\kappa r-\cot_K r)(\cot_K r)Q\\
+\frac{n(n-2)}{2}KQ(r)
-(n-2)K^+Q-\frac{(n-2)^2}{2(\sin_K r)^2}Q-(n-2)(\cot_\kappa r-\cot_K r)rK^+Q\\-\frac{n(n-2)}{2}KQ+
\frac{(n-2)^2}{2(\sin_K r)^2}Q
 - (n-1)(K-\kappa)Q=\\
  -2(\cot_\kappa r-\cot_K r)Q'(r)
+2(n-2)(\cot_\kappa r-\cot_K r)(\cot_K r)Q\\
-\!(n-2)K^+Q-\!(n-2)\!(\cot_\kappa r-\cot_K r)rK^+Q
 -\! (n-1)(K-\kappa)Q\ .
\end{multline}
We get
\begin{multline}
\label{ineq:Q-thm}
(\log Q)''(r)+(\cot_K r)(\log Q)'(r) 
+(n+1)(\cot_\kappa r-\cot_K r)(\log Q)'(r)\\ \geq 
-(n-1)(K-\kappa)-(n-2)K^{+}-(n-2)(K-\kappa)\frac{r^2 K^+}{3}
\\ \geq
-(2n-3)(K-\kappa)-(n-2)K^{+}\ ,
\end{multline}
where we applied parts~(i) and~(ii) of Lemma~\ref{lem:xcot2}.
Recall $q(r)=Q(r)(\sin_K r)$. A direct
computation shows 
\begin{multline}
\label{identity:sin-thm}
(\log\sin_K r)''+(\cot_K r)(\log\sin_K r)'+(n+1)(\cot_\kappa r-\cot_K r)
(\log\sin_K r)'
\\=-K+(n+1)\cot_K r(\cot_\kappa r-\cot_K r)\geq -K\ .
\end{multline}
Finally, adding up~(\ref{ineq:Q-thm}) and~(\ref{identity:sin-thm})  gives
the statement in the theorem.
\end{proof} 
\subsection{Proof of Corollary~\ref{cor:integrated-convexity}}
\label{subsec:cor}
\begin{proof}[Proof of Corollary~\ref{cor:integrated-convexity}]
From Theorem~\ref{thm:perturbed-convexity}
\begin{equation}
\label{ineq:temp}
q''(r)+(\cot_K r)q'+(n+1)(\cot_{-K} r-\cot_K r)q'(r)\geq -(5n-7)Kq
\end{equation}
From Lemma~\ref{lem:xcot2} and from the fact that $q'\geq 0$ (part~(i)
of Theorem~\ref{thm:perturbed-convexity}) we know that 
\begin{equation}
\label{ineq:temp2}
(n+1)(\cot_{-K} r-\cot_K r)q' \leq 2(n+1)rKq'/3\ .
\end{equation}

From part~(i) of Theorem~\ref{thm:perturbed-convexity}
 we know that 
\begin{equation}
\label{ineq:temp3}
-(5n-7)Kq\geq -5Kq'(r)/\cot_K r\ .
\end{equation}
 It is easy to check that  
 $1/\cot_K r \leq 2r$ for $r\leq\pi/(3\sqrt{K})$.
 
Hence, from inequalities~(\ref{ineq:temp})--(\ref{ineq:temp3}) we get
\begin{equation}
\label{ineq:approx-thm}
q''(r)+\frac{1+8n r^2K}{r}q'-\frac{q'(r)^2}{q(r)}\geq 0
\end{equation}
 for $r\sqrt{K}<\pi/3$.
If we define $l(t)=q(\me^{t})$ then~(\ref{ineq:approx-thm})
is equivalent to 
\begin{equation}
\label{ineq:approx-thm-t}
l''(t)+8nK\me^{2t}l'(t)\geq 0
\end{equation}
for $t<-(\log K)/2+\log (\pi/3)$\ .
We will now integrate inequality~(\ref{ineq:approx-thm-t}).



Inequality~(\ref{ineq:approx-thm-t}) can be rewritten as 
$(\me^{4nK\me^{2t}}l'(t))'\geq 0$,
from which we see that for $s_2<s_1$
\begin{equation}
\label{ineq:l'}
l'(s_2)\leq \me^{4nK(\me^{2s_1}-\me^{2s_2})}l'(s_1)
\leq \me^{4nK\me^{2s_1}}l'(s_1)\ ,
\end{equation}
where, the last inequality is true since $l'(s)\geq 0$
from part~(i) of Theorem~\ref{thm:perturbed-convexity}.
Hence for $t_2<t_1$ such that $16nK\me^{2t_1}<1$, 
and $0\leq h\leq \log 2$
\begin{multline*}
   l(t_2+h)-l(t_2)=\int_0^{h} l'(t_2+s)\, ds
\leq \int_0^h \me^{4nK\me^{2t_1+2s}}l'(t_1+s)\, ds \\
\leq \me^{4nK\me^{2t_1+2h}}(l(t_1+h)-l(t_1))
\leq (1+32nK\me^{2t_1})(l(t_1+h)-l(t_1))\ .
\end{multline*}
The last inequality follows from $\me^x\leq 1+2x$ for $0\leq x\leq 1$.

Going back from the variable $t$ to the variable $r$ we obtain
the stated corollary.
\end{proof}

\section{The case of constant curvature manifolds}
\label{sec:examples}
We give a new proof of Theorem~\ref{thm:log-convexity-rn} 
and a second proof of Theorem~\ref{thm:perturbed-convexity}
in the case of constant nonzero curvature in dimension two.
\subsection{Zero curvature}
Let $u_l(r,\theta) = r^l\cos(l\theta)$, $v_l=r^l\sin(l\theta)$.
$q_{u_l}(r) = q_{v_l}(r)= \pi r^{2l+1}$.
It is obvious that $\log q_l$ is a convex function of $\log r$.

Now, any harmonic function can be written
as
$$u=a_0+\sum_{l=1}^{\infty} a_l u_l(r,\theta) +b_l v_l(r,\theta)\ .$$
The functions $u_l(r, \cdot), v_l(r, \theta)$ 
are pairwise orthogonal as functions on the unit circle for all fixed $r$.
For any two orthogonal functions $f, g$
 on the unit circle for all fixed $r$ we have
$q_{f+g}(r)=q_f(r)+q_g(r)$.
We also know that the sum of log-convex functions is log-convex
and the pointwise limit of log-convex functions is log-convex.
These considerations give a short new proof of Theorem~\ref{thm:log-convexity-rn}.
\vspace{1ex}

\noindent\remark\ A similar argument carries out also in dimensions $\geq 3$.

\subsection{Positive curvature, dimension two}
The metric on the 2-dimensional sphere of constant curvature $K>0$
is given by
$$ds^2=dr^2+(\sin_K r)^2\dif\theta^2\ .$$
Here $0\leq r<\pi/\sqrt{K}$, and $0\leq \theta\leq 2\pi$.
Hence, $$q_u^K(r)=\int_0^{2\pi} u(r,\theta)^2 
(\sin_K r)\dif\theta. $$
We define also 
$q_f^0(r) = \int_0^{2\pi} f(r,\theta)^2 r\,\dif\theta$
for function defined on $\Rb^2$.

Let $f(r, \theta)$ be defined on $\Rb^2$ by
$u(r,\theta) = f(\tan (r\sqrt{K}/2), \theta)$.
$f$ is related to $u$ by a stereographic projection.
Since harmonic functions are preserved under conformal
transformations in dimension two,
$f(r, \theta)$ is harmonic if and only if $u(r, \theta)$ is harmonic.
We also note the relation
$$q_u^K(r) = \frac{q_f^0(\tan(r\sqrt{K}/2))}{\tan (r\sqrt{K}/2)}\sin_K r\ .$$

Suppose now $f$ is harmonic. Then,
from the fact that $\log q_f^0$ is a convex function of $\log r$,
we obtain
\begin{theorem}
If $K>0$ then
$$(\log q_u^K)''(r) +(\cot_K r) (\log q_u^K)'(r) \geq -K\ .$$
\end{theorem}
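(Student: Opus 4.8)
The plan is to reduce the curved statement to the flat log-convexity result (Theorem~\ref{thm:log-convexity-rn}) via the stereographic change of variables already set up above. Write $s = \tan(r\sqrt{K}/2)$, so that $u(r,\theta) = f(s,\theta)$ with $f$ harmonic on a disc in $\Rb^2$, and recall the key identity
$$q_u^K(r) = \frac{q_f^0(s)}{s}\,\sin_K r\ .$$
Taking logarithms, $\log q_u^K(r) = \log q_f^0(s) - \log s + \log\sin_K r$. The point is that $\log q_f^0$ is a convex function of $\log s$ (this is the content of Theorem~\ref{thm:log-convexity-rn} in dimension two, which one may invoke directly). So the first step is to express everything in terms of the variable $\sigma := \log s$, exploit convexity there, and then translate back to the variable $r$.

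Concretely, I would set $L(\sigma) := \log q_f^0(e^\sigma)$, which satisfies $L'' \ge 0$. Then $\log q_u^K(r) = L(\sigma(r)) - \sigma(r) + \log\sin_K r$, where $\sigma(r) = \log\tan(r\sqrt{K}/2)$. Differentiating twice in $r$ and using the chain rule,
$$(\log q_u^K)''(r) = L''(\sigma)\,(\sigma')^2 + \big(L'(\sigma)-1\big)\,\sigma'' + (\log\sin_K r)''\ .$$
Since $L''\ge 0$ and $(\sigma')^2\ge 0$, the first term is nonnegative and can be dropped. It remains to handle the operator $\partial_r^2 + (\cot_K r)\partial_r$ applied to $\log q_u^K$; the plan is to show that $L'$ only enters through the combination $(L'(\sigma)-1)\big(\sigma'' + (\cot_K r)\sigma'\big)$ and to compute $\sigma'' + (\cot_K r)\sigma'$ explicitly. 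A direct computation with $\sigma' = \sqrt{K}/\sin(r\sqrt{K}) = \sqrt{K}/\sin_K(r)\cdot(1/\sqrt K)$... more precisely $\sigma'(r) = \sqrt{K}/\sin(r\sqrt K)$ should give that this combination vanishes (the stereographic coordinate is, up to the conformal factor, ``harmonic'' for the radial part of the operator), so the $L'$ terms disappear entirely. What then survives is exactly $(\log\sin_K r)'' + (\cot_K r)(\log\sin_K r)'$ minus the contribution of the $-\sigma(r)$ term under the same operator; since $-\log s$ also satisfies $\partial_r^2(-\log s) + (\cot_K r)\partial_r(-\log s) = 0$ by the same vanishing, only the $\log\sin_K r$ piece remains.

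The last step is the identity
$$(\log\sin_K r)'' + (\cot_K r)(\log\sin_K r)' = -K\ ,$$
which follows from $(\log\sin_K r)' = \cot_K r$ and $(\cot_K r)' = -K - (\cot_K r)^2$: indeed $(\log\sin_K r)'' = (\cot_K r)' = -K-\cot_K^2 r$, and adding $(\cot_K r)^2$ gives $-K$. Combining the three contributions (the nonnegative $L''$ term, the vanishing $L'$ and $-\log s$ terms, and this identity) yields
$$(\log q_u^K)''(r) + (\cot_K r)(\log q_u^K)'(r) \ge -K$$
as claimed. The main obstacle, and the only place where real care is needed, is verifying the two vanishing claims — that $\sigma'' + (\cot_K r)\sigma' = 0$ and hence that the $L'(\sigma)$ and $-\sigma$ terms drop out — since everything else is either the hypothesis $L''\ge 0$ or the routine trigonometric identity above; I would double-check this by writing $s = \tan(r\sqrt K/2)$, using the half-angle relation $\sin(r\sqrt K) = 2s/(1+s^2)$, and confirming that $s'/s = \tfrac12\sqrt K(1+s^2)/s \cdot \ldots = \sqrt K/\sin(r\sqrt K) = \cot_K r \cdot \sqrt K /\cos(r\sqrt K)$... in any case a short explicit computation settles it.
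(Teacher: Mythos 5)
Your proposal is correct and follows exactly the paper's route: stereographic change of variable $s=\tan(r\sqrt K/2)$, conformal invariance of harmonicity in dimension two, the identity $q_u^K(r)=q_f^0(s)\sin_K r/s$, and the flat log-convexity of $q_f^0$; the paper simply states the conclusion without the chain-rule computation, which you supply, and your two key facts ($\sigma''+(\cot_K r)\sigma'=0$ for $\sigma=\log\tan(r\sqrt K/2)$, and $(\log\sin_K r)''+(\cot_K r)(\log\sin_K r)'=-K$) both check out. No gap; this is the intended argument with the omitted details made explicit.
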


\subsection{Negative curvature}
In the spherical example one can replace all trigonometric functions
by the corresponding hyperbolic functions and obtain
\begin{theorem}
If $K<0$ then
$$(\log q_u^K)''(r) +(\cot_K r) (\log q_u^K)'(r) 
\geq -K > 0\ .$$
\end{theorem}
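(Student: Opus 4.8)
The plan is to mirror the positive-curvature argument of the previous subsection, replacing the stereographic projection of the round sphere by the Poincar\'e-disk model of the hyperbolic plane. Write $\beta=\sqrt{-K}>0$, so that $\sin_K r=\sinh(\beta r)/\beta$ and $\cot_K r=\beta\coth(\beta r)$, and the curvature-$K$ metric is $ds^2=dr^2+(\sin_K r)^2\dif\theta^2$ for $0\le r<\infty$, $0\le\theta\le 2\pi$. I would define $f$ on the unit disk in $\Rb^2$ by $u(r,\theta)=f(\tanh(\beta r/2),\theta)$; since $\rho=\tanh(\beta r/2)$ realizes the conformal equivalence between the curvature-$K$ plane and the flat unit disk, and harmonicity is conformally invariant in dimension two, $f$ is harmonic if and only if $u$ is. Using the identity $\sinh(\beta r)/\tanh(\beta r/2)=2\cosh^2(\beta r/2)$ one obtains, exactly as in the spherical case, the relation
$$q_u^K(r)=\frac{q_f^0(\tanh(\beta r/2))}{\tanh(\beta r/2)}\,\sin_K r\ .$$

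Next I would take logarithms and write $\log q_u^K(r)=\psi(s(r))+\log\sin_K r$, where $s(r):=\log\tanh(\beta r/2)$ and $\psi(\tau):=\log q_f^0(\me^\tau)-\tau$. By Theorem~\ref{thm:log-convexity-rn} applied in $\Rb^2$, $\log q_f^0$ is a convex function of $\log\rho$; subtracting the affine term $\log\rho$ preserves convexity, so $\psi$ is convex. Now apply the operator $L:=\tfrac{d^2}{dr^2}+(\cot_K r)\tfrac{d}{dr}$. A short computation gives $s'(r)=\beta/\sinh(\beta r)=1/\sin_K r$ and, crucially, $L[s]=0$ --- this is the cancellation that lets the flat-space convexity survive the change of variables. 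By the chain rule,
$$L\bigl[\psi(s(r))\bigr]=\psi''(s(r))\,s'(r)^2+\psi'(s(r))\,L[s]=\psi''(s(r))\,s'(r)^2\ge 0\ .$$

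Finally, $L[\log\sin_K r]=-K$: indeed $(\log\sin_K r)'=\cot_K r$, so $L[\log\sin_K r]=(\cot_K r)'+(\cot_K r)^2=\beta^2\bigl(\coth^2(\beta r)-1/\sinh^2(\beta r)\bigr)=\beta^2=-K$ (this is also the identity~(\ref{identity:sin-thm}) specialized to $\kappa=K$). Adding the two contributions gives $L[\log q_u^K](r)\ge -K$, and since $K<0$ the right-hand side is strictly positive, which is the claim. The only delicate points are the hyperbolic-trigonometric simplifications --- the formula for $q_u^K$ in terms of $q_f^0$, the vanishing $L[s]=0$, and $L[\log\sin_K r]=-K$ --- but all three are elementary one-line checks; the conceptual content is entirely parallel to the positive-curvature case, and I expect no real obstacle beyond bookkeeping.
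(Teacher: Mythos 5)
Your proposal is correct and follows essentially the same route as the paper: the paper's proof of the negative-curvature case is exactly the hyperbolic analogue of the spherical subsection (Poincar\'e-disk conformal model, conformal invariance of harmonicity in dimension two, the relation $q_u^K(r)=q_f^0(\tanh(\beta r/2))\sin_K r/\tanh(\beta r/2)$, and the flat log-convexity of Theorem~\ref{thm:log-convexity-rn}), obtained by replacing trigonometric by hyperbolic functions. Your explicit verification via $L[s]=0$ and $L[\log\sin_K r]=-K$ simply spells out the computation the paper leaves implicit.
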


\section{Discussion}
\label{sec:discussion}
We raise several questions which we find 
interesting to pursue.
\subsection{Beyond the injectivity radius}
It would be interesting to understand whether
 Theorem~\ref{thm:perturbed-convexity} remains
true beyond the injectivity radius as long as $r\sqrt{K^{+}}<\pi/2$
in the spirit of Bishop-Gromov's Volume Comparison Theorem~(\cite{gromov-structures79}).
\vspace{-2ex}
\subsection{Proof by an orthogonal basis of functions.}
\label{subsec:orthogonal}
In a manifold of constant curvature $K\neq 0$ of dimension $\geq 3$ 
we would like to have a simple proof, inspired from the proof 
presented in section~\ref{sec:examples} for the case $K=0$. 
This would shed light
also on the sharpness of Theorem~\ref{thm:perturbed-convexity}
in dimensions $n\geq 3$.
\vspace{-2ex}
\subsection{Ricci curvature.}
Can one of the bound assumptions on the sectional curvature in Theorem~\ref{thm:perturbed-convexity} be relaxed to
 a bound on the Ricci curvature? 
 \vspace{-2ex}
\subsection{Eigenfunctions on negatively curved manifolds.}
Can we replace the extension procedure described in Section~\ref{sec:extension} by a procedure which will give us more information on the growth of eigenfunctions on negatively curved manifolds?
\vspace{-2ex}
\subsection{A comparison theorem for positive harmonic functions}
Let $f(\theta)$ be a $2\pi$-periodic non-negative function.
Let $u$ be a solution of the Dirichlet problem in the unit disk:
$\Delta u=0$ with $u(1,\theta)=f(\theta)$.
Now, suppose we consider the unit geodesic disk in a 
Riemannian manifold with non-positive variable curvature,
and solve the Dirichlet problem there. We get a solution $v(r, \theta)$.
Can we compare the values of $u$ to the values of $v$?
Or equivalently, can we compare the Poisson kernels involved?
\vspace{-2ex}
\begin{bibdiv}
\begin{biblist}

\bib{agmon66}{book}{
   author={Agmon, Shmuel},
   title={Unicit\'e et convexit\'e dans les probl\`emes diff\'erentiels},
   series={S\'eminaire de Math\'e\-matiques Sup\'er\-ieures, No. 13 (\'Et\'e,
   1965)},
   publisher={Les Presses de l'Universit\'e de Montr\'eal, Montreal, Que.},
   date={1966},
   pages={152},
}

\bib{ahlfors-ca}{book}{
   author={Ahlfors, Lars V.},
   title={Complex analysis},
   edition={3},
   note={An introduction to the theory of analytic functions of one complex
   variable;
   International Series in Pure and Applied Mathematics},
   publisher={McGraw-Hill Book Co.},
   place={New York},
   date={1978},
   pages={xi+331},
}

\bib{almgren79}{article}{
   author={Almgren, Frederick J., Jr.},
   title={Dirichlet's problem for multiple valued functions and the
   regularity of mass minimizing integral currents},
   book={
      publisher={North-Holland},
      place={Amsterdam},
   },
   date={1979},
   pages={1--6},
}

\bib{bish-cri64}{book}{
   author={Bishop, Richard L.},
   author={Crittenden, Richard J.},
   title={Geometry of manifolds},
   series={Pure and Applied Mathematics, Vol. XV},
   publisher={Academic Press},
   place={New York},
   date={1964},
   pages={ix+273},
}

\bib{bru78}{article}{
   author={Br{\"u}ning, Jochen},
   title={\"Uber Knoten von Eigenfunktionen des Laplace-Beltrami-Operators},
   journal={Math. Z.},
   volume={158},
   date={1978},
   number={1},
   pages={15--21},
   issn={0025-5874},
}

\bib{don-fef88}{article}{
   author={Donnelly, Harold},
   author={Fefferman, Charles},
   title={Nodal sets of eigenfunctions on Riemannian manifolds},
   journal={Invent. Math.},
   volume={93},
   date={1988},
   number={1},
   pages={161--183},
   issn={0020-9910},
}

\bib{gar-lin86}{article}{
   author={Garofalo, Nicola},
   author={Lin, Fang-Hua},
   title={Monotonicity properties of variational integrals, $A_p$ weights
   and unique continuation},
   journal={Indiana Univ. Math. J.},
   volume={35},
   date={1986},
   number={2},
   pages={245--268},
}

\bib{gromov-structures79}{book}{
   author={Gromov, Mikhael},
   title={Structures m\'etriques pour les vari\'et\'es riemanniennes},
   language={French},
   series={Textes Math\'ematiques [Mathematical Texts]},
   volume={1},
   publisher={CEDIC},
   place={Paris},
   date={1981},
   pages={iv+152},
   isbn={2-7124-0714-8},
}

\bib{han-lin}{article}{
   author={Han, Qing},
   author={Lin, Fang-Hua},
   title={Nodal sets of solutions of elliptic differential equations},
   eprint={http://nd.edu/~qhan/nodal.pdf},
}

\bib{jer-leb96}{article}{
   author={Jerison, David},
   author={Lebeau, Gilles},
   title={Nodal sets of sums of eigenfunctions},
   conference={
      title={Harmonic analysis and partial differential equations (Chicago,
      IL, 1996)},
   },
   book={
      series={Chicago Lectures in Math.},
      publisher={Univ. Chicago Press},
      place={Chicago, IL},
   },
   date={1999},
   pages={223--239},
}

\bib{landis63}{article}{
   author={Landis, E. M.},
   title={Some questions in the qualitative theory of second-order elliptic
   equations (case of several independent variables)},
   journal={Uspehi Mat. Nauk},
   volume={18},
   date={1963},
   number={1 (109)},
   pages={3--62},
   translation={journal={Russian Math. Surveys},
                volume={18},
                date={1963},
                pages={1--62},
                },
   issn={0042-1316},
}

\bib{lin91}{article}{
   author={Lin, Fang-Hua},
   title={Nodal sets of solutions of elliptic and parabolic equations},
   journal={Comm. Pure Appl. Math.},
   volume={44},
   date={1991},
   number={3},
   pages={287--308},
   issn={0010-3640},
}

\bib{naz-pol-sod05}{article}{
   author={Nazarov, F{\"e}dor},
   author={Polterovich, Leonid},
   author={Sodin, Mikhail},
   title={Sign and area in nodal geometry of Laplace eigenfunctions},
   journal={Amer. J. Math.},
   volume={127},
   date={2005},
   number={4},
   pages={879--910},
   issn={0002-9327},
}

\bib{petersen06}{book}{
   author={Petersen, Peter},
   title={Riemannian geometry},
   series={Graduate Texts in Mathematics},
   volume={171},
   edition={2},
   publisher={Springer},
   place={New York},
   date={2006},
   pages={xvi+401},
   isbn={978-0387-29246-5},
   isbn={0-387-29246-2},
}

\bib{schoen-yau94}{book}{
   author={Schoen, R.},
   author={Yau, S.-T.},
   title={Lectures on differential geometry},
   series={Conference Proceedings and Lecture Notes in Geometry and
   Topology, I},
   publisher={International Press},
   place={Cambridge, MA},
   date={1994},
   pages={v+235},
   isbn={1-57146-012-8},
}

\bib{yau82}{article}{
   author={Yau, Shing-Tung},
   title={Problem section},
   conference={
      title={Seminar on Differential Geometry},
   },
   book={
      series={Ann. of Math. Stud.},
      volume={102},
      publisher={Princeton Univ. Press},
      place={Princeton, N.J.},
   },
   date={1982},
   pages={669--706},
}
\end{biblist}
\end{bibdiv}
\vfill
\noindent Dan Mangoubi,\\ 
Einstein Institute of Mathematics,\\
Hebrew University, Givat Ram,\\
Jerusalem 91904,\\
Israel\\
%
\smallskip
\texttt{\small mangoubi@math.huji.ac.il}
\end{document}